\documentclass[english, 12pt]{amsart}
\usepackage{amssymb}
\usepackage{amsfonts}
\usepackage{amscd}
\usepackage[T1]{fontenc}
\usepackage{color}
%\usepackage{multirow}
%\usepackage{showkeys}

%\usepackage{hyperref}

%{#1}}}

\definecolor{immi}{rgb}{0,.6,.1}

  % rem = short remove

%\usepackage{hyperref}

% replaced \newcommand by \long\def to allow paragraph breaks inside.

\long\def\change#1{{\color{blue} #1}}

% Strike things out: (doesn't work well with line breaks)
\newbox\removebox
\newcommand\remove[2]{%
\setbox\removebox=\ifmmode\hbox{$#2$}\else\hbox{#2}\fi%
\leavevmode
\rlap{\textcolor{#1}{\vrule height0.8ex depth-0.6ex width\wd\removebox}}%
\box\removebox
}
\long\def\bigremove#1{%
\par\setbox\removebox=\vbox{#1}%
\vbox{%
\vbox to0pt{\hbox{\tikz\draw[color=blue,thick] (0,0) -- (\wd\removebox,-\ht\removebox)  (\wd\removebox,0) -- (0,-\ht\removebox);}}
\box\removebox
}
}

\usepackage{mathrsfs} %for the \mathscr

\newcommand{\cCexp}{\cC^{\mathrm{exp}}}

\newcommand{\Vol}{\operatorname{Vol}}

\newcommand{\Loc}{{\mathrm{Loc}}}

\newcommand{\sgn}{\operatorname{sgn}}

% The following defines \RFss to produce \RF^*_* in such a way that
% \RFss_{blah} produces \RF^*_{*,blah}
\makeatletter
\def\RFss@@#1{\RF^*_{\!*#1}}
\def\RFss@_#1{\RFss@@{,#1}}
\def\RFss{\@ifnextchar_{\RFss@}{\RFss@@{}}}
\makeatother

\def\VF{\mathrm{VF}}

\def\VG{\mathrm{VG}}
\newcommand{\RF}{{\rm RF}}

\def\ac{{\overline{\rm ac}}}

\def\res{\operatorname{res}}

\def\11{{\mathbf 1}}

\def\CC{{\mathbb C}}

\def\NN{{\mathbb N}}

\def\QQ{{\mathbb Q}}
\def\RR{{\mathbb R}}

\def\ZZ{{\mathbb Z}}

\def\cC{{\mathscr C}}
\def\cD{{\mathcal D}}

\def\cF{{\mathcal F}}

\def\cL{{\mathcal L}}
\def\cM{{\mathcal M}}

\def\cO{{\mathcal O}}

\def\llp{\mathopen{(\!(}}

\def\rrp{\mathopen{)\!)}}

%%%Notation for fields and rings %%%%%%%
\newcommand{\C}{{\mathbb C}}
\newcommand{\K}{{F}}
\newcommand{\Z}{{\mathbb Z}}
\newcommand{\Q}{{\mathbb Q}}

\newcommand{\F}{{\mathbb F}}
\newcommand{\R}{{\mathbb R}}

\newcommand{\gal}{\operatorname{Gal}}

%%%motivic notation %%%%%%%%
%\newcommand{\de}{{\text{Def}}}

%\newcommand{\cA}{{\mathcal A}}
%\newcommand{\cB}{{\mathcal B}}
%\newcommand{\cM}{{\mathcal M}}

%\newcommand{\cF}{{\mathcal F}}
%\newcommand{\cC}{{\mathscr C}}
%\newcommand{\cQ}{{\mathcal Q}}

%\newcommand{\cL}{{\mathcal L}}
%\newcommand{\ord}{\operatorname{ord}}
%\newcommand{\ac}{\overline{\operatorname{ac}}}

%\newcommand{\cm}{{\mathfrak m}}
%\newcommand{\cN}{{\mathcal O}^{\text{mot}}(0)}
%\newcommand{\cN}{{\mathcal E}}

\newcommand{\ldp}{\cL_{\operatorname{DP}}}
\newcommand{\ldpo}[1]{\cL_{\operatorname{DP},#1}}

%%%%%%Notation for the group, Lie algebra, etc. %%%%%%%%
\newcommand{\bG}{{\bf G}}

\newcommand{\bT}{{\bf T}}

\newcommand{\bM}{{\bf M}}

\newcommand{\fg}{{\mathfrak g}}

\newcommand{\fh}{{\mathfrak h}}
\newcommand{\Lie}{\operatorname{Lie}}
\newcommand{\rss}{\mathrm{rss}}

\newcommand{\reg}{\mathrm{reg}}
\newcommand{\tf}{{C_c^\infty}}
\newcommand{\Ad}{\operatorname{Ad}}

% Definable objects:
\newcommand{\dG}{\underline{\bG}}%{\cG}
\newcommand{\dM}{\underline{\bM}}%{\cM}
\newcommand{\dg}{\underline{\fg}}%{g}
\newcommand{\domega}{\underline{\omega}}

%%%%%%%Other macros %%%%%%%

\def\llp{\mathopen{(\!(}}

\def\rrp{\mathopen{)\!)}}

\newtheorem{thm}{Theorem}[subsection]
\newtheorem{lem}[thm]{Lemma}
\newtheorem{cor}[thm]{Corollary}
\newtheorem{prop}[thm]{Proposition}

\theoremstyle{definition}
\newtheorem{defn}[thm]{Definition}

\newtheorem{notn}[thm]{Notation}
\newtheorem{example}[thm]{Example}

\newtheorem{def-prop}[thm]{Proposition-Definition}
\newtheorem{def-theorem}[thm]{Theorem-Definition}
\newtheorem{def-lem}[thm]{Lemma-Definition}

\theoremstyle{remark}
\newtheorem{remark}[thm]{Remark}
\newtheorem{rem}[thm]{Remark}

\theoremstyle{plain}

\numberwithin{equation}{subsection}

  {\par\medskip\noindent #1\par\begingroup%
    \advance\leftskip by 1em\advance\rightskip by 1em}%
  {\par\endgroup}

\newcommand{\ord}{\operatorname{ord}}

%Absolute value notation

\newcommand{\coliff}{\;:\Longleftrightarrow\;}

%\newcommand{\statement}[1]{\begin{itemize}\item #1\end{itemize}}

%\newcommand{\GL}{\operatorname{GL}}

%    Blank box placeholder for figures (to avoid requiring any
%    partular graphics capabilities for printing this document).

\begin{document}

\setcounter{tocdepth}{1} % Show subsection in table of contents

\author[Cluckers]{Raf Cluckers}
\address{Universit\'e de Lille, Laboratoire Painlev\'e, CNRS - UMR 8524, Cit\'e Scientifique, 59655
Villeneuve d'Ascq Cedex, France, and,
KU Leuven, Department of Mathematics,
Celestijnenlaan 200B, B-3001 Leu\-ven, Bel\-gium}
\email{Raf.Cluckers@univ-lille.fr}
\urladdr{http://rcluckers.perso.math.cnrs.fr/}
 
\author[Gordon]{Julia Gordon}
\address{Department of Mathematics, University of British Columbia,
Vancouver BC V6T 1Z2 Canada}
\email{gor@math.ubc.ca}
\urladdr{http://www.math.ubc.ca/$\sim$gor}

\author[Halupczok]{Immanuel Halupczok}
\address{Mathematisches Institut, HHU D\"usseldorf,
Universit\"atsstr. 1, 40225 D\"usseldorf,
Germany}
\email{math@karimmi.de}
\urladdr{http://www.immi.karimmi.de/en/}

\thanks{The author R.C. was supported by the European Research Council under the European Community's Seventh Framework Programme (FP7/2007-2013) with ERC Grant Agreement nr. 615722
MOTMELSUM, by the Labex CEMPI  (ANR-11-LABX-0007-01), and  would like to thank both the Forschungsinstitut f\"ur Mathematik (FIM) at ETH Z\"urich and the IH\'ES for the hospitality during part of the writing of this paper. J.G. was supported by NSERC, and I.H. was supported by the SFB~878 of the Deutsche Forschungsgemeinschaft. The authors thank the referee for valuable comments.}
 
\subjclass[2000]{Primary 14E18; Secondary 22E50, 40J99}

\keywords{Transfer principles for motivic integrals, uniform bounds, motivic integration, motivic constructible exponential functions, loci of motivic exponential class, orbital integrals, admissible
representations of reductive groups, Harish-Chandra characters}

\title[Uniform analysis and orbital integrals]{Uniform analysis on local fields and applications to orbital integrals}

\begin{abstract}
We study upper bounds, approximations, and limits for functions of motivic exponential class, uniformly in non-Archimedean local fields whose characteristic is $0$ or sufficiently large.
Our results together form a flexible framework for doing analysis over local fields in a field-independent way.
As corollaries, we obtain many new transfer principles, for example, for local constancy, continuity, and existence of various kinds of limits.
Moreover, we show that the Fourier transform of an $L^2$-function of motivic exponential class is again of motivic exponential class.
As an application in representation theory, we prove uniform bounds for the Fourier transforms of orbital integrals on connected reductive $p$-adic groups.
\end{abstract}

\maketitle

%\tableofcontents

\section{Introduction}
\label{sec:intro}

One branch of analysis on non-Archimedean local fields $F$ deals with functions from (subsets of) $F^n$ to $\CC$.
In this paper, we develop a framework which makes it possible to carry out this kind of analysis
uniformly in the local field $F$, in a similar sense as algebraic geometry works in a field-independent way.
This extends the work pursued in \cite{CGH}, \cite[Appendix B]{ShinTemp}, \cite{CGH4}, and
\cite[Section 4]{GordonHales}.
One of our motivations is the program initiated by Hales to reformulate in a field-independent way the
entire theory of complex admissible
representations of reductive groups over local fields, \cite{Hales,Hales1}. We note that
in retrospect, one can see an inkling of the possibility of such a reformulation
already in one of the earliest books on the subject, see
\cite{GelfandGraevPyat}, p.~121.
This paper develops the framework in which
this program can be carried out. Another motivation is to enable further applications in the line of e.g.~\cite{CHL,YGordon,CCGordonS,CGH2,ShinTemp,GordonHales}.
In particular, in Section \ref{sec:app}, we develop uniform in $p$ bounds for the Fourier transforms of orbital integrals, normalized
by the discriminant, discussed in more detail below.

Using our framework has several direct implications. The most striking one is probably that
for any property that can be expressed uniformly in the field using the formalism, one has
a transfer principle like the one of Ax--Koshen/Ershov \cite{AK1,Ersov}, i.e., whether the property holds can be transferred between
local fields of positive and mixed characteristic (provided that the residue field characteristic is big enough).
Another direct implication is that if some value
 (obtained using the formalism) can be bounded for each local field $F$ independently, then
%one obtains results for free about how the bound depends on $F$.
one obtains some precise information on how the bound depends on $F$, for free. An example of this approach, applied to Fourier transforms of orbital integrals, appears in Section \ref{sec:app}.
% This strategy was already applied to obtain uniform bounds for (normalized) orbital integrals in \cite{S-T}*{Appendix B}, which needed this framework for

By ``doing analysis uniformly in several fields'', we mean that the analytic operations are carried out on certain abstract objects
which can be specialized to every specific field, yielding the familiar concrete objects. We take a very naive
approach to this: we simply define an abstract object to be the collection of its specializations. We do however
require that this collection is given in some uniform way (to be made precise below).
As an example, one kind of objects we work with are definable sets in the sense
of model theory. According to the above approach, we consider a definable set as a collection $(X_F)_F$ of sets $X_F \subset F^n$, all given by the same formula,
where $F$ runs over the local fields we are interested in.

All our uniformity results are valid for all local fields whose characteristic is $0$ or sufficiently big.
For this reason, it makes sense to think of two objects $(X_F)_F$ and $(X'_F)_F$ as being the same if they differ only
for $F$ of small positive characteristic. (Formally defining the objects this way would have been possible but unhandy.)
Note that in particular, we obtain uniformity results even for local fields which are arbitrarily ramified extensions of $\QQ_p$, for every $p$. This is new, compared
to most previous papers, and it builds on \cite{CHallp}.

The central objects of study are ``$\cCexp$-functions'', as in \cite{CHallp}, also called ``functions of $\cCexp$-class'' or ``of motivic exponential class''. (Those generalize the ``motivic exponential functions'' of \cite{CGH,CGH3,CGH4}, based on the ``motivic constructible exponential functions'' of \cite{CLexp}.) A $\cCexp$-function $f$ is a uniformly given collection of
functions $f_F$ from, say, $F^n$ to $\CC$ for every local field $F$.
As an example, given a polynomial $g \in \ZZ[x]$, we obtain a $\cCexp$-function  by setting $f_F(x) := |g(x)|_F$,
where $|\cdot|_F$ is the norm on the local field $F$ (in fact, this function lies in a smaller class of $\cC$-functions -- a similar class without the exponentials, also defined below). Another example can be built by
composing a nontrivial additive character on $F$ %in
with a polynomial. In general,
the class of $\cCexp$-functions is defined as the $\CC$-algebra generated by those and some other functions,
which involve the valuation on $F$, certain exponential sums over the residue field, and additive characters $F \to \CC^\times$.
(The word ``exponential'' refers to the presence of those additive characters, which behave like exponentiation.)

The class of $\cCexp$-functions has intentionally been defined in such a way that it is closed under integration in the following sense.
Given any $\cCexp$-function $f = (f_F)_F$, say in $m+n$ variables,
there exists a $\cCexp$-function $g = (g_F)_F$ in the first $m$ variables such that for every local field $F$ of characteristic $0$ or $\gg1$,
we have
\[
\int_{y\in F^n} f_F(x,y) |dy| = g_F(x) \qquad\text{for all } x \in F^m.
\]
(Here, the integral is a usual Lebesgue-integral with respect to a suitably normalized Haar measure on $F^n$; for the moment, we assume that $f_F(x,\cdot)$ is integrable for each $F$ and each $x$.)
In this sense, we consider $g$ as a ``field-independent integral of $f$''.
In this paper, we prove that a whole zoo of analytic operations on
$\cCexp$-functions can be done in a similar, field-independent way.

For an operation which turns functions into other functions, being able to carry it out field-independently in this paper means
that the class of $\cCexp$-functions is closed under the operation, in the same sense as it is closed under integration as above.

Another type of question that we want to be able to answer field-independently is
whether a function has
a given (analytic) property (like being integrable, being continuous, being locally constant, existence of various kinds of limits).
This really becomes meaningful only in a parametrized version:
Given a $\cCexp$-function $f$ in, say, $m + n$ variables, we want to understand how the set
\[X_F := \{x \in F^m \mid f(x,\cdot) \text{ has the desired property}\}\]
depends on $F$.
Typically, the collection $X = (X_F)_F$ is not a definable set; however for most of the properties we will consider, we will prove that
$X = (X_F)_F$ is what we call a ``$\cCexp$-locus'': $X_F = \{x \in F^m \mid g_F(x) = 0\}$  for some $\cCexp$-function $g$.
Those loci already appeared in \cite{CGH}; in the present paper, they play a central role.

Even though $\cCexp$-loci are not definable sets, they are almost as flexible; for example, the collection of $\cCexp$-loci is closed under positive boolean combinations,
under the $\forall$-quantifier and under various kinds of for-almost-all quantifiers. This flexibility is inherited by our formalism:
It suffices to prove that $\cCexp$-functions and $\cCexp$-loci are closed under very few
``fundamental'' operations, to then obtain
a multitude of other operations essentially for free (i.e., by combining the
fundamental ones in various ways). Accordingly, this paper contains a few long and deep proofs (of the fundamental operations),
from which then many other results follow easily.

As stated at the beginning of this introduction, our formalism yields Ax--Kochen/Ershov like transfer principles.
More precisely, one has a general transfer principle for arbitrary conditions that can be expressed in terms of $\cCexp$-loci.
(This follows directly from the transfer principle for equalities in \cite{CLexp}.)
This means that we obtain a transfer principle for every analytic condition which can be expressed uniformly
in the sense of this paper. For example, given a
$\cCexp$-function $f$, it only depends on the residue field of $F$ whether $f_F$ is continuous/integrable/bounded/constant/locally constant/has a limit at $0$/etc., provided that the residue field characteristic is big enough.

\medskip

The outline of the paper is as follows. Section~\ref{sec:basic-def} contains the basic definitions that
are necessary to understand all results; in particular, $\cCexp$-functions and
$\cCexp$-loci are defined.

In Section~\ref{sec:locbasicop}, we recall some results from previous papers needed in this
paper. Here, Proposition~\ref{locbasicop} plays a key role, listing the most important operations
under which the collection of $\cCexp$-loci is closed.

In the remainder of Section~\ref{sec:intro}, we recall some more results from \cite{CGH,CHallp} (in
particular about integrability), we introduce one new basic ``operation'' on $\cCexp$-loci (namely eventual behavior;
Section~\ref{sec:eventual}), and we give some first example applications of the locus formalism.

The remaining uniform analytic operations are grouped by topic in the next two sections,
as follows.

Section~\ref{sec:bounds} contains the results about bounds and suprema. Given a bounded $\cCexp$-function $f$
(i.e., such that $f_F$ is bounded for each $F$), we obtain results about %how the bound depends
the dependence of the bound on $F$,
and also, for families of functions, %how the bound depends
the dependence of the bound on the family parameter.
A statement about a bound on $f(x)$ is the same as a statement about $\sup_x |f(x)|$,
so it would be natural to first prove that the class of
$\cCexp$-functions is closed under taking suprema, a result which would also be useful for many other purposes.
Unfortunately, given
a real-valued $\cCexp$-function $f(x,y)$, the supremum $\sup_y f(x,y)$ is, in general, not a
$\cCexp$-function in $x$. However, what we obtain instead is that the supremum
is ``approximately'' a $\cCexp$-function. For all applications of the supremum
in this paper, this turns out to be strong enough.

The main topic of Section~\ref{sec:limits} is limit behavior: pointwise limits, uniform limits and limits
with respect to the $L^p$-norm for various $p$. As applications, we prove results about continuity and about Fourier transforms.

Finally, in Section~\ref{sec:app}, we give an application of the results of Section~\ref{sec:bounds} to Fourier transforms of orbital integrals. The Fourier transform of an orbital integral of a reductive Lie algebra  $\fg$ is itself a locally integrable function on $\fg$, and it is known that when normalized by the square root of the discriminant, this function is bounded on compact sets. We prove that for a definable family of definable compact sets $\omega_\lambda$, the bound is of the form $q_F^{a+b\|\lambda\|}$, where
$q_F$ is the cardinality of the residue field of $F$, and $a$ and $b$ are constants that depend only on $\fg$ and the formulas defining the sets $\omega_\lambda$.

This systematic study of bounds for $\cCexp$-functions grew out of the similar study for motivic functions without the exponential that was initiated in order to answer a question that arose in an application of the Trace Formula,
\cite[Appendix B]{ShinTemp}.
Though we do not have such an application in mind for the Fourier transforms of orbital integrals,
we use this opportunity to establish this bound, both for possible future applications, and to illustrate applicability of this machinery.
In particular, now that the orbital integrals are proved to be motivic exponential functions (see Lemma~\ref{lem:can.orb} and the preceding paragraph), all the results
of Section \ref{sec:limits} on the speed of convergence are applicable in particular, to families of orbital integrals, giving for free the estimates by a negative power of $q_F$ for all kinds of sequences of orbital integrals that are known to converge to zero.  We do not pursue any specific results of this kind in this paper, but note that they should follow from the results of
Section \ref{sec:limits} and Lemma \ref{lem:can.orb} automatically. We hope that these results will be useful in applications of the Trace Formula in the spirit of \cite{ShinTemp}.

\subsection{Summary of the results}
\label{sec:summary}

Spread over the paper, we give many examples of results that can be deduced
from the basic ones using the formalism.
For the convenience of the reader, we here give a summary of all results (including those that were known before).
Those marked with $^\star$ are ``fundamental'' ones, i.e., those which need real work;
those without $^\star$ are the ones deduced using the formalism.

\smallskip

\noindent
\textbf{Operations on $\cCexp$-functions:}
Applying any of the following operations to a $\cCexp$-function yields a $\cCexp$-functions again:
\nopagebreak

\noindent
\begin{tabular}{@{}ll}
Integral & Theorem~\ref{thm:mot.int.}$^\star$
\\
Bound & Theorem~\ref{thm:fam}
\\
Approximate supremum & Theorem~\ref{thm:fam:gen}$^\star$
\\
Pointwise limit & Theorems~\ref{limits}, \ref{limits:basic}$^\star$
\\
Continuous extension & Proposition~\ref{prop.cont.ext}
\\
Uniform and $L^p$-limit & Theorem~\ref{L^pcom}$^\star$
\\
Fourier transform & Theorem~\ref{stab:four:II}
\end{tabular}

\medskip

\noindent
\textbf{Properties of $\cCexp$-functions:} The following properties of $\cCexp$-functions
are given by $\cCexp$-loci (2nd column) and can be transferred in the Ax--Kochen/Ershov way (3rd column):
\nopagebreak

{\small
\noindent
\begin{tabular}{@{}lll}
Constant & Corollary~\ref{cor.const} &  Corollary~\ref{trans.cons}
\\
Locally constant & Corollary~\ref{cor.loc.const} &  Corollary~\ref{trans.loc.cons}
\\
$L^1$-Integrable & Theorem~\ref{thm:mot.int.}$^\star$ & \cite[Thm 4.4.1]{CGH}
\\
Bounded & Theorem~\ref{thm:fam}$^\star$ & \cite[Thm 4.4.2]{CGH}
\\
Limit is $0$ & Theorems~\ref{thm:limits0}, \ref{thm:limits0:gen} & Corollary~\ref{trans:lim}
\\
\raggedright
(Pointwise) limit exists &  Thms~\ref{limits}, \ref{limits:gen}, \ref{limits:basic}$^\star$ &  Corollary~\ref{trans:lim}
\\
Continuous & Corollary~\ref{cor.cont} & Corollary~\ref{trans:lim}
\\
Limits exist: uniform, $L^2$, $L^\infty$ & Theorem~\ref{L^pcom}$^\star$ & Corollary~\ref{trans:funlim}
\\
$L^1$-, $L^2$-, $L^\infty$-integrable & Corollary~\ref{cor.finLp} & Corollary~\ref{trans:Lplim}
\\
$L^\infty$-norm equal to $0$ & Corollary~\ref{cor.almost} & Corollary~\ref{trans:Lplim}
\end{tabular}

}
\medskip

\noindent
\textbf{Operations on $\cCexp$-loci and properties of $\cCexp$-loci:}
Applying any of the following operations to a $\cCexp$-locus yields a $\cCexp$-locus again:
\nopagebreak

\noindent
\begin{tabular}{@{}ll}
Boolean combinations, $\forall$-quantification & Proposition~\ref{locbasicop}$^\star$
\\
Eventual behavior & Proposition~\ref{for:all:large:mu}$^\star$
\\
Almost everywhere quantification & Corollary~\ref{cor.almost}
\end{tabular}

\medskip

\noindent
\textbf{Consequences of being a $\cCexp$-function / a $\cCexp$-locus:}\nopagebreak

\noindent
\begin{tabular}{@{}ll}
Transfer principle for $\cCexp$-loci & Theorem~\ref{thm.trans}$^\star$
\\
Bounds are uniform in the field & Theorems~\ref{thm:presburger-fam}, \ref{thm:fam}
\\
Speed of convergence of pointwise limits & Thms~\ref{thm:limits0}, \ref{thm:limits0:gen}, \ref{limits:basic}$^\star$
\\
$L^p$-convergence $\Rightarrow$ pointwise convergence & Lemma~\ref{Lp-vs-pointwise}$^\star$
\end{tabular}
\medskip

\noindent
\textbf{Applications to Fourier transforms of orbital integrals:}\\
\nopagebreak
{\small
\begin{tabular}{@{}ll}
Orbital integrals are of $\cC$-class
(up to a $\cC$-constant) & Lemma \ref{lem:can.orb}
\\
Fourier transforms of orbital integrals are of $\cCexp$-class & Lemma \ref{lem:can.orb}
\\
Uniform bound for Fourier transforms of orbital integrals  &  Theorem \ref{thm:orb.int.bound}
\end{tabular}

}

\subsection{Basic definitions: functions and loci of $\cCexp$-class}
\label{sec:basic-def}

The main object of study in this paper is the class of ``$\cCexp$-functions''; by incorporating all finite field extensions of $\QQ_p$ for all $p$ as in \cite{CHallp}, they generalize the ``motivic exponential functions'' of \cite[Section 2]{CGH4} and \cite[Section 2]{CGH3}, all based on \cite{CLexp}.
We note that this class of functions differs also in another way from the class of motivic constructible exponential functions originally
defined in \cite{CLexp}: instead of the abstract motivic functions of \cite{CLexp}, we consider collections of functions, uniformly in local fields. This avoids issues related to what is called \emph{null-functions} in \cite{Casselman-Cely-Hales}.
We recall this framework and fix our terminology. % of motivic exponential functions in the form of Section 2 of \cite{CGH4}.

\begin{defn}[Local fields]\label{AO}
Let $\Loc$ be the collection of all non-Archimedean local fields $F$ equipped with
a uniformizer $\varpi_F \in F$ for the valuation ring of $F$.
(So more formally, elements of $\Loc$ are pairs $(F, \varpi_F)$.)
Here, by a non-Archimedean local field we mean a finite extension of $\Q_p$ or of $\F_p\llp t\rrp$ {for any prime $p$}.

Given an integer $M$, let $\Loc_{M}$ be the collection of $(F,\varpi_F) \in \Loc$ such that
$F$ has characteristic either $0$ or at least $M$.
%the residue field of $F$ has characteristic at least $M$.

We will use the notation ``for all $F \in \Loc_{\gg 1}$'' to mean ``there exists an $M$ such that for all $F \in \Loc_{M}$''.
\end{defn}

Note that in various previous papers, the characteristic of the residue field $k_F$ is required to be sufficiently big.
In contrast, results in this paper only require $F \in \Loc_{\gg 1}$, which allows the characteristic of $k_F$ to be arbitrary if the characteristic of $F$ is zero. That this is sufficient builds on the results of \cite{CHallp}. On the other hand, it still seems far out of reach to treat local fields $F$ of small positive characteristic, given that
the model theory of such fields is not understood.

\begin{notn}[Sorts]
Given a local field $F \in \Loc$, we sometimes write $\VF_F$ for the underlying set of $F$, we write $\cO_F$ for the valuation ring of $F$, $\cM_F$ for the maximal ideal, $\RF_F$ for the residue field and $q_F$ for
the number of elements of $\RF_F$. The value group (even though always being $\ZZ$) will sometimes be denoted by $\VG_F$.
Moreover, we introduce a notation for the \emph{residue rings:} For positive integers $n$, set $\RF_{n,F} := \cO_F/n\cM_F$
(where $n\cM_F = \{na \mid a \in \cM_F\}$).
We write $\ord_F\colon \VF_F \to \VG_F \cup \{\infty\}$ for the valuation map, $\res_F\colon \cO_F \to \RF_F$ for the residue map and more generally
$\res_{n,F}\colon \cO_F \to \RF_{n,F}$ for the canonical projections.

We use ``$W \subset F^*$'' (or ``$W \subset \VF_F^*$'') as a shorthand notation for $W$ being a subset of $F^n$ for some $n \ge 0$,
and likewise ``$W \subset \Z^*$'' (or ``$W \subset \VG_F^*$'') and ``$W \subset \RFss_{F}$'',
the latter meaning that $W$ is a subset of a product $\prod_{i=1}^\ell \RF_{n_i,F}$, for some $\ell$ and $n_1, \dots, n_\ell$.
\end{notn}

Note that $n\cM_F$ is always a power of the ideal $\cM_F$, but which power it is depends on $n$ and on $F$. Namely, for $r$ the unique non-negative integer such that the order of $\varpi_F ^r$ equals $\ord n$, one has $n\cM_F = \cM_F^{1+r}$. 
In particular, we have $n\cM_F = \cM_F$ and hence $\RF_{n,F} = \RF_F$ whenever the residue field characteristic of $F$ does not divide $n$. This somewhat technical definition  
of the sorts is necessary to obtain the desired uniformity in $F$. In particular, since any formula uses only finitely many sorts, 
this implies that it suffices to exclude finitely many $p$ to achieve that the formula does not use any residue ring at all (except $\RF_F$).
This fits to the papers that exclude small residue characteristic but do not need residue rings. 

We use the same generalized Denef--Pas language $\ldp$ as in \cite[Section~2.2]{CHallp} and consider each $F \in \Loc$ as a structure in that language.
It is defined as follows.

\begin{defn}\label{defn.lang}
The language $\ldp$
has sorts $\VF$ (the valued field), $\VG$ (the value group) and $\RF_n$ for each $n \ge 1$ (the residue rings), with
the ring language on $\VF$ and on each $\RF_n$, the ordered abelian group language on $\VG$, the valuation map
$\ord\colon \VF \to \VG \cup \{\infty\}$ and generalized angular component maps $\ac_n\colon \VF \to \RF_n$ sending $x$ to $\res_n(\varpi_F^{-\ord(x)}x)$.
See \cite[Section~2.2]{CHallp} for more details.
\end{defn}

For some applications, it may be useful to have additional constants in the language (e.g.
for the elements of a fixed subring of $F$, or for the uniformizer $\varpi_F$).
Using some standard techniques from model theory,
all of our results can be suitably reformulated in such an enriched language;
see Appendix~\ref{sec:const} for details.

An $\ldp$-formula uniformly yields sets for all $F \in \Loc$.
As stated in the introduction, it will be convenient to consider a definable set
as the collection of the sets it actually defines in the fields $F$:

\begin{defn}[Definable sets and functions]\label{defset}
A collection
$X = (X_F)_{F \in \Loc_{M}}$ of subsets $X_F\subset \VF_F^*\times \RF^*_{*,F}\times \VG_F^*$ for some $M$ is called a \emph{definable set} if there is an
$\ldp$-formula $\varphi$ such that $X_F = \varphi(F)$ for each
$F$ in $\Loc_{M}$.

For definable sets $X$ and $Y$, a collection $f = (f_F)_{F \in\Loc_{M}}$ of functions $f_F:X_F\to Y_F$ for some $M$ is called a \emph{definable function} and denoted by $f:X\to Y$ if the collection of graphs of the $f_F$ is a definable set. (For this to make sense, we assume the $M$ of $f$ to be at least as big as the ones of $X$ and $Y$.)
\end{defn}

In reality, we are only interested in definable sets and functions for ``big $M$'',
i.e., we think of $X$ and $X'$ as being equal if
\begin{equation}\label{eq.defset}
X_F = X'_F \text{ for all } F \in \Loc_{\gg1}.
\end{equation}
However, for practical reasons, it is often easier
to work with representatives instead of introducing this equivalence relation formally.
Nevertheless, we will often implicitly enlarge $M$, for example calling $(X_F)_{F}$ definable
if $(X_F)_{F\in\Loc_{M}}$ is definable for big enough $M$.

Our definition of definable sets is the one which will be most convenient for this paper,
but note that there are various other possibilities. For example, if one were to
define a definable set to be the collection indexed only by the local fields of characteristic $0$,
one could recover (\ref{eq.defset}) by the Ax--Kochen/Ershov transfer principle.

We apply the typical set-theoretical notation to definable sets $X, Y$, e.g.\ writing
$X \subset Y$ (if $X_F \subset Y_F$ for each $F \in \Loc_{\gg 1}$), $X \times Y$, and so on, which may increase $M$ if necessary.
More generally, we will often omit the indices $F$ when the intended meaning is clear,
writing e.g. ``$\{x \in X \mid \text{some condition written without indices $F$}\}$'' to mean the corresponding collection of subsets of the $X_F$;
also, by ``for all $x \in X$, $f(x) = g(x)$'' we mean ``for all $F \in \Loc_{\gg 1}$ and all $x \in X_F$, $f_F(x) = g_F(x)$''.

Using definable functions and sets as building blocks, we introduce
``functions of $\cC$-class''; those will then be generalized to ``functions of $\cCexp$-class''.

\begin{defn}[Functions of $\cC$-class]\label{motfun}
Let $X = (X_F)_{{F\in \Loc_{M}}}$ be a definable set.
A collection $H = (H_F)_F$ of functions $H_F:X_F\to\RR$ is called \emph{a function of $\cC$-class} (or simply a \emph{$\cC$-function}) on $X$ if
there exist integers
$N$, $N'$, and $N''$, nonzero integers $a_{i\ell}$, definable functions $\alpha_{i}:X\to \ZZ$ and $\beta_{ij}:X\to \ZZ$,
and definable sets  $Y_i\subset X\times \RFss$ such that for all $F\in \Loc_{ {M}}$ and all $x\in X$
\begin{equation}\label{eq.motfun}
H(x)=\sum_{i=1}^N  \# Y_{i,x} \cdot  q^{\alpha_{i}(x)} \cdot \big( \prod_{j=1}^{N'} \beta_{ij}(x) \big) \cdot \big( \prod_{\ell=1}^{N''} \frac{1}{1-q^{a_{i\ell}}} \big),
\end{equation}
where $Y_{i,x} = \{y\in  \prod_{t=1}^{\ell_i} \RF_{n_{i,t},F} \mid (x,y)\in Y_{i}\}$, for some $\ell_i$ and $n_{i,t}$.

We write $\cC(X)$ to denote the ring of $\cC$-functions on $X$.
\end{defn}

In this definition, we already omitted lots of indices $F$ from the notation. Equation (\ref{eq.motfun}) e.g.\ really means
\[
H_F(x)=\sum_{i=1}^N  \# Y_{i,F,x} \cdot  q_F^{\alpha_{iF}(x)} \cdot \big( \prod_{j=1}^{N'} \beta_{ijF}(x) \big) \cdot \big( \prod_{\ell=1}^{N''} \frac{1}{1-q_F^{a_{i\ell}}} \big).
\]
(Note that each $Y_{i,F,x}$ is a finite set, so $\# Y_{i,F,x}$ makes sense.)

\begin{defn}[Additive characters]\label{psiu}
For any local field $F$, let $\cD_F$ be the set of the additive characters $\psi$ on $F$ that are trivial on the maximal ideal $\cM_F$ of $\cO_F$ and
nontrivial on $\cO_F$.
\end{defn}

Expressions involving additive characters of $p$-adic fields often give rise to exponential sums; that's what the ``exp'' stands for
in the definition below.

\begin{defn}[Functions of $\cCexp$-class]\label{expfun}
Let $X  = (X_F)_{ {F\in \Loc_{M}}}$ be a definable set.
A collection $H = (H_{F,\psi})_{F,\psi}$ of functions $H_{F,\psi}:X_F\to\CC$ for $F\in \Loc_{M}$ and $\psi\in \cD_F$ is called \emph{a function of $\cCexp$-class}
(or a \emph{$\cCexp$-function}) on $X$ if
there exist integers $N>0$ and $n_i\geq 1$, functions $H_i=(H_{iF})_F$ in $\cC(X)$, definable sets $Y_i\subset X\times \RFss$ and definable functions $g_i:Y_i\to \VF$ and $e_i:{Y_i}\to \RF_{n_i}$ for $i=1,\ldots,N$, such that for all $F\in \Loc_{M}$, all $\psi\in \cD_F$ and all $x\in X_F$
\begin{equation}\label{fexp}
H_{F,\psi}(x)=\sum_{i=1}^N   H_{iF}(x)\left( \sum_{y \in Y_{i,F,x}}\psi\left( g_{iF}(x,y)   + \frac{e_{iF}(x,y)}{n_i}     \right)\right),
\end{equation}
where $\psi(a+\frac{v}{n})$ for $a\in F$ and $v\in \RF_{n,F}$, by abuse of notation, is defined as $\psi(a+\frac{u}{n})$ with $u$ any element in $\cO_F$ such that $\res_n(u)=v$, which is well defined since $\psi$ is constant on $\cM_F$.
We write $\cCexp(X)$ to denote the ring of $\cCexp$-functions on $X$.
\end{defn}

In the same way as we omit indices $F$, we will also omit indices $\psi$ from the notation,
for example writing ``$\forall x\in X\colon H(x) = H'(x)$'' to mean
``$\forall F\in \Loc_{\gg1},\psi\in \cD_F,x\in X_F\colon H_{F,\psi}(x) = H_{F,\psi}'(x)$''.
Also, we will freely consider collections $X  = (X_F)_{F\in \Loc_{M}}$ as collections
indexed by $F$ and $\psi$ not depending on $\psi$.

Loci of vanishing of $\cCexp$-functions will play a key role in this paper.
To some extent, we will be able to apply the same reasoning as we do to the definable sets.
We develop some terminology for that.

\begin{defn}[Loci]\label{locset}
A \emph{locus of $\cC$-class} (or a $\cC$-locus) is a zero set $X$ of a function $f \in \cC(\VF^*\times \RFss\times \VG^*)$,
i.e., $X  = (X_F)_{F \in \Loc_{M}}$, where $X_F = \{x \in \VF_F^* \times \RFss_F \times \VG_F^* \mid f_F(x) = 0\}$.
Similarly, a \emph{locus of $\cCexp$-class} is a zero set of a function $f \in \cCexp(\VF^*\times \RFss\times \VG^*)$.
(The latter one is a collection indexed by $F$ and $\psi$).
\end{defn}

Note that any definable set is also a $\cC$-locus, since characteristic functions of definable sets are of $\cC$-class.
The converse, however, is not true; for example, $X = \{(x,y) \in \VG^2 \mid q^x - y = 0\}$ is a $\cC$-locus, but it is not definable.
Moreover, definable sets never depend on $\psi$.

Sometimes, it is notationally more convenient to refer to the condition describing a set instead of the set itself:

\begin{notn}[Conditions]\label{loccond}
Fix some integers $M,n,\ell,m_1, \dots, m_\ell,r$ and consider a collection $P = (P_{F,\psi})_{F \in \Loc_{M},\psi \in \cD_F}$ of
conditions $P_{F,\psi}(x)$ on elements $x \in \VF_F^n \times \RF_{m_1,F} \times \dots \times \RF_{m_\ell,F} \times \VG^r$.
We call $P$ a \emph{definable condition} if the collection of sets $X  = \{x \in \VF^* \times \RFss \times \VG^*
\mid P(x) \text{ holds}\}$ is definable.
Analogously, we call $P$ a condition of $\cC$-class or of $\cCexp$-class, respectively, if $X$
is a locus of the corresponding class. Again, we also say $\cC$-condition or $\cCexp$-condition for short.
\end{notn}

Thus a $\cCexp$-condition is a family $P = (P_{F,\psi})_{F,\psi}$ of conditions given by a $\cCexp$-function $f$, namely:
$P_{F,\psi}(x)$ holds if and only if $f_{F,\psi}(x) = 0$ (for all appropriate $F$, $\psi$, $x$).

Note that in the case $n = \ell = r = 0$, each $P_{F,\psi}$ is a condition on elements of a one point set
so it makes sense to ask whether $P_{F,\psi}$ holds without specifying any element.
(Definable such $P$ are essentially just first order sentences.)

\begin{rem}\label{remove-exp}
In the remainder of this section and in all of Sections~\ref{sec:bounds} and \ref{sec:limits}, literally every result that is stated for $\cCexp$-functions and $\cCexp$-loci is also valid if one replaces all occurrences of $\cCexp$ by $\cC$, i.e., if no input object of a result uses the additive character $\psi$, then $\psi$ does not appear in the output objects either. (This is obvious from the proofs.)
\end{rem}

\subsection{The locus formalism, transfer, and constancy}
\label{sec:locbasicop}

By definition of first order formulas, the class of definable conditions is closed under finite boolean combinations and under quantification.
Results from \cite{CGH,CHallp} state that this is partially also true for the new classes of conditions introduced in Notation~\ref{loccond}:
Each of them is closed under finite positive boolean combinations and under universal quantification.
Note however that unlike definable conditions, they are not closed under negation, and neither under
existential quantification. For $\cCexp$-conditions this follows from Example~\ref{ex.noExist}, and for $\cC$-conditions it follows from Example \ref{ex:existq}.

The following proposition summarizes the positive results for further reference:

\begin{prop}[Basic operations on Loci]\label{locbasicop}
In the following,
$x$ runs over a definable set $X$, and $y$ runs over a definable set $Y$.
\begin{enumerate}
 \item Any definable condition is a $\cCexp$-condition.
 \item If $P(x)$ and $Q(x)$ are $\cCexp$-conditions on $x$, then so are $P(x) \wedge Q(x)$ and  $P(x) \vee Q(x)$.
 \item If $P(x, y)$ is a $\cCexp$-condition on $x$ and $y$, then $\forall y\colon P(x,y)$ is a $\cCexp$-condition on $x$.
 \item A $\cCexp$-condition $P(x)$ stays a $\cCexp$-condition when considered as a condition on $x$ and $y$ that is independent of $y$.
\end{enumerate}
\end{prop}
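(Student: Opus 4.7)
My plan is as follows. Items (1) and (4) are essentially definitional, item (2) reduces to the ring structure of $\cCexp$ together with closure under complex conjugation, and item (3) is the serious step, which I would deduce from the preparation/cell-decomposition machinery of \cite{CGH,CHallp}.

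For (1), given a definable set $D$, the characteristic function $\mathbf{1}_{D^c}$ is visibly a $\cC$-function (take a single summand with $Y_1 = D^c$ and no $q$, $\beta$, or geometric series factors), hence a $\cCexp$-function; its zero set is $D$. For (4), a $\cCexp$-function $f \in \cCexp(X)$ pulls back to the $\cCexp$-function $\tilde f(x,y) := f(x)$ on $X \times Y$ using the same defining data with $y$ as an unused parameter, and $\{\tilde f = 0\}$ is the desired subset of $X \times Y$.

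For (2), let $P,Q$ be $\cCexp$-conditions, given as zero sets of $f,g \in \cCexp(X)$. The disjunction $P \vee Q$ is the zero set of $f\cdot g \in \cCexp(X)$. For the conjunction, I would use the zero set of $f\bar f + g\bar g$, which is a nonnegative real-valued function and vanishes precisely when both $f$ and $g$ do. This requires closure of $\cCexp$ under complex conjugation, which I would verify directly from Definition~\ref{expfun}: since the coefficients $H_i \in \cC(X)$ are already real-valued and $\overline{\psi(a)} = \psi(-a)$, replacing each $(g_i, e_i)$ by $(-g_i, -e_i)$ exhibits $\bar f$ as a $\cCexp$-function.

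The main obstacle is (3). Given $P(x,y)$ as the zero set of $f \in \cCexp(X \times Y)$, I must produce $h \in \cCexp(X)$ whose zero set is $\{x : \forall y \in Y,\ f(x,y)=0\}$. The naive attempt of setting $h(x) := \int_Y |f(x,y)|^2 |dy|$ (when defined) only detects vanishing almost everywhere, so one cannot succeed with integration alone. Instead, my plan is to proceed by induction on the number of variables of $y$, handling the three sorts separately after suitable cell decomposition / preparation of $f$ as in \cite{CGH,CHallp}: quantification over a residue-ring variable is a finite conjunction (iterated use of (2)); quantification over a value-group variable is handled by Presburger periodicity, expressing the condition as a finite conjunction over residue classes modulo the period combined with a universally quantified tail controlled by a leading-term analysis; and quantification over a valued-field variable is treated by applying the valued-field cell decomposition for $\cCexp$-functions, so that on each cell the vanishing of $f(x,\cdot)$ for all $y$ reduces to the vanishing of the coefficients of a normal form, each of which is itself a $\cCexp$-condition on $x$. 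Re-assembling through (1), (2) and (4) then produces the desired $\cCexp$-condition; this inductive and technically delicate step is the heart of the proposition, and I would quote the detailed arguments from \cite{CGH,CHallp} rather than redo them.
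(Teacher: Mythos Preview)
Your proposal is correct and takes essentially the same approach as the paper: items (1), (2), and (4) are handled by elementary manipulations (characteristic functions, $fg$ and $f\bar f + g\bar g$, pullback), and item (3) is deferred to the cell-decomposition/preparation machinery of \cite{CGH,CHallp}, which is exactly what the paper does by citing the (Iva)-part of \cite[Theorem~4.4.2]{CHallp}. One minor imprecision in your sketch of (3): universal quantification over a residue-ring variable is not literally a finite conjunction in the uniform sense, since $\#\RF_{n,F}$ depends on $F$; the correct device is to sum $|f|^2$ over that variable (integration with counting measure), which is what the cited references do---but since you explicitly defer to those references, this does not affect the validity of your outline.
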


Again, we are omitting indices. For example, ``$\forall y\colon P(x,y)$'' is the family $R = (R_{F,\psi})_{F,\psi}$ of conditions, where,
for $F\in \Loc_{\gg1},\psi\in \cD_F,x\in X_F$, $R_{F,\psi}(x)$ holds if and only if $P_{F,\psi}(x,y)$ holds
for all $y \in Y_F$.

\begin{proof}[Proof of Proposition~\ref{locbasicop}]
(1) is just a reformulation of the fact that definable sets are loci.
(2) is just a simple manipulation of the $\cCexp$-functions corresponding to $P$ and $Q$; see
Corollary 3.5.4 of \cite{CGH}.
(3) is the (Iva)-part of Theorem 4.4.2 of \cite{CHallp} (cf.\ also Theorem 4.3.2 of \cite{CGH}).
(4) follows from the corresponding statement for $\cCexp$-functions, i.e., that such functions in $x$
can also be considered as functions in $x$ and $y$.
\end{proof}

Another way of stating Proposition~\ref{locbasicop} is that any (syntactically correct) mathematical expression built out of $\ldp$-formulas,
``$f = 0$'' (for $f$ a $\cCexp$-function), $\wedge$, $\vee$ and $\forall$ defines a $\cCexp$-condition.
(Note that (4) is implicitly used when we write something like $P(x, y)\wedge Q(x, z)$, which is a condition on $x,y,z$.)
The $\cCexp$-functions $f$ in turn can be given by any expression of the form as in Definition~\ref{motfun} or \ref{expfun}.

Such expressions can be considered as a generalization of first order formulas, and they provide
a short and convenient way to prove that new conditions are of $\cCexp$-class.
Several of the main results of this paper (and also of \cite{CGH}) just state that certain
additional operations can be used in expressions defining $\cCexp$-conditions.
As an example of how this formalism works, we prove:

\begin{cor}[Constancy]\label{cor.const}
Let $f$ be in $\cCexp(X\times Y)$, for some definable sets $X$ and $Y$.
Then the set of $x \in X$ for which the function $y \mapsto f(x, y)$ is constant is a $\cCexp$-locus.
\end{cor}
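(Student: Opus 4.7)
The plan is to express constancy as a universally quantified equality and then apply Proposition~\ref{locbasicop} directly. Concretely, the function $y\mapsto f(x,y)$ is constant exactly when
\[
\forall y\in Y,\ \forall y'\in Y\colon\ f(x,y) - f(x,y') = 0.
\]
So it suffices to show that the right-hand side is a $\cCexp$-condition on $x$.

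First I would note that the function $g(x,y,y') := f(x,y) - f(x,y')$ lies in $\cCexp(X\times Y\times Y)$: regarding $f$ as a function of the triple $(x,y,y')$ independent of $y'$ (resp.\ of $y$) gives a $\cCexp$-function on $X\times Y\times Y$ by the function-analogue of Proposition~\ref{locbasicop}(4), and $\cCexp(X\times Y\times Y)$ is a ring, so the difference $g$ is again of $\cCexp$-class. By Definition~\ref{locset}, the zero set $\{g = 0\}$ is a $\cCexp$-locus, i.e., $g(x,y,y') = 0$ is a $\cCexp$-condition on $(x,y,y')$.

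Finally I would apply Proposition~\ref{locbasicop}(3) twice, quantifying universally over $y'\in Y$ and then over $y\in Y$, to conclude that the condition $\forall y,y'\colon g(x,y,y')=0$ is a $\cCexp$-condition on $x$. The set of $x\in X$ where $y\mapsto f(x,y)$ is constant is the corresponding $\cCexp$-locus, as required.

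There is no real obstacle here; the whole point is to illustrate the formalism, so the ``proof'' is essentially one line of verification that constancy can be written using only $\wedge$, $\forall$ and the $\cCexp$-ring operations, after which Proposition~\ref{locbasicop} does the work.
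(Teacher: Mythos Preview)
Your proposal is correct and takes essentially the same approach as the paper: write constancy as $\forall y_1,y_2\in Y\colon f(x,y_1)=f(x,y_2)$ and invoke Proposition~\ref{locbasicop}. The paper's proof is literally that one line, and the subsequent explanatory paragraph in the paper mirrors your more detailed unpacking almost verbatim.
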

\begin{proof}
That $f(x,\cdot)$ is constant can be written as $\cCexp$-condition as follows:
\[\forall y_1, y_2 \in Y\colon f(x,y_1) = f(x,y_2).\]
\end{proof}

To illustrate our conventions and formalism once more, let us give the following extra explanation of the above proof:
$f(x,y_1) - f(x,y_2)$ is a $\cCexp$-function in $x$, $y_1$, $y_2$, so this difference being zero defines a $\cCexp$-condition
on $x$, $y_1$, $y_2$, namely the family
$P = (P_{F,\psi})_{F,\psi}$, where $P_{F,\psi}(x, y_1, y_2)$ holds if and only if
$f_{F,\psi}(x,y_1) - f_{F,\psi}(x,y_2) = 0$. By Proposition~\ref{locbasicop} (3),
\[
 P'(x, y_1) \coliff \forall y_2\colon P(x, y_1, y_2)
\]
is a $\cCexp$-condition, and using Proposition~\ref{locbasicop} (3) once more,
so is
\[
P''(x) \coliff  \forall y_1\colon P'(x, y_1) \iff \forall y_1,y_2\colon P(x, y_1, y_2).
\]
This $P''(x)$ corresponds to a $\cCexp$-locus $Z = \{x \in X \mid P''(x)\}$, which, by definition of $P''$, is the one desired by the corollary:
For every $F, \psi$, we have $x \in Z_{F, \psi}$ if and only if $f_{F,\psi}(x,y_1) - f_{F,\psi}(x,y_2) = 0$ holds for all $y_1, y_2 \in Y_F$
(which just means that $f_{F,\psi}(x,\cdot)$ is constant).

One motivation to introduce $\cCexp$-conditions is that
for them, one has a transfer principle analogous to the Ax--Kochen/Ershov Theorem for first order sentences:
Whether a $\cCexp$-condition holds only depends on the residue field, provided that the residue field characteristic is big enough.
More precisely, we have the following:

\begin{thm}[Transfer]\label{thm.trans}
Suppose that $P(x)$ is a $\cCexp$-condition on $x \in X$, for some definable set $X$.
Then there exists an $M$ such that for all $F \in \Loc$ with
residue field characteristics at least $M$, the following holds.

If $P_{F,\psi}(x)$ holds for all $\psi \in \cD_{F}$ and all $x \in X_F$, then for any
$F' \in \Loc$ whose residue field is isomorphic to the one of $F$,
$P_{F',\psi}(x)$ holds for all $\psi \in \cD_{F'}$ and all $x \in X_{F'}$.
\end{thm}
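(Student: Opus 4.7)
The plan is to reduce the statement to the transfer principle for equalities from \cite{CLexp} (in the form available in the arbitrarily-ramified setting of \cite{CHallp}), using the locus formalism of Proposition~\ref{locbasicop} to eliminate every quantifier over the definable variable $x$.

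First, I would unpack the assumption: since $P(x)$ is a $\cCexp$-condition on $x \in X$, by Notation~\ref{loccond} and Definition~\ref{locset} there exists $f \in \cCexp(X)$ such that, for all $F\in\Loc_{\gg 1}$, $\psi\in \cD_F$ and $x\in X_F$, $P_{F,\psi}(x)$ is equivalent to $f_{F,\psi}(x)=0$. Next I would absorb the quantifier over $x$. By Proposition~\ref{locbasicop}(3), the condition
\[
\tilde P \;:\;\equiv\; \forall x \in X \colon P(x)
\]
is itself a $\cCexp$-condition, now in zero free variables. Thus there is a $\cCexp$-function $g$ on the one-point definable set such that $\tilde P_{F,\psi}$ holds if and only if $g_{F,\psi} = 0$. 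In this packaged form the premise of the theorem reads: $g_{F,\psi} = 0$ for every $\psi \in \cD_F$.

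Now I would invoke the transfer principle of \cite{CLexp} for the vanishing of motivic exponential constants, in the uniform-in-$F$ version established in \cite{CHallp}. It provides an $M$ (depending only on the data specifying $g$, hence only on $P$) such that for any $F\in \Loc$ with residue field characteristic at least $M$, the statement ``$g_{F,\psi}=0$ for all $\psi \in \cD_F$'' depends only on the isomorphism type of the residue field $\RF_F$. Unwinding the definitions of $g$ and $\tilde P$ then yields exactly the conclusion of the theorem for this $M$.

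The main obstacle I expect is purely one of bookkeeping rather than mathematics: I need to make sure that the transfer principle cited really produces a single threshold $M$ that depends only on the expression defining $P$, and that it applies to $\cCexp$-functions in the collection-of-specializations sense used in this paper (including arbitrarily ramified mixed-characteristic local fields), rather than only to the abstract motivic objects of \cite{CLexp}. This is precisely the content of the extension of the CLexp transfer principle carried out in \cite{CHallp}, so once that reference is in place, the argument reduces to the two-line combination of Proposition~\ref{locbasicop}(3) and the cited transfer statement.
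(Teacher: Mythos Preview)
Your proposal is correct and essentially matches the paper's approach: the paper's proof is simply a citation to Proposition~9.2.1 of \cite{CLexp}, and the reduction to the $0$-ary case via Proposition~\ref{locbasicop}(3) that you spell out is exactly the observation the paper makes in the remark immediately following the proof. One minor point: since the conclusion of Theorem~\ref{thm.trans} only concerns $F$ of large residue characteristic, the original transfer principle of \cite{CLexp} already suffices and you do not need to invoke an extension from \cite{CHallp} for this particular statement.
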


\begin{proof}
This is Proposition~9.2.1 of \cite{CLexp}.
\end{proof}

Note that as a ``basic transfer result'', it would be enough to have Theorem~\ref{thm.trans} for 0-ary
$\cCexp$-conditions. Indeed, Theorem~\ref{thm.trans} for general $P(x)$ is obtained by applying transfer
to the 0-ary condition
\[
\forall x \in X\colon P(x),
\]
which is of $\cCexp$-class by Proposition~\ref{locbasicop} (3).

This transfer principle can be combined with many other results from this paper.
As a first example, we obtain:

\begin{cor}[Transfer for constancy]\label{trans.cons}
Let $X$ and $Y$ be definable sets, and let $f$ be in $\cCexp(X\times Y)$.
Then there exists $M$ such that, for any $F\in \Loc$ of residue field characteristic $\ge M$,
the truth of the following statement only depends on (the isomorphism class of) the residue field of $F$:
\[
\text{For all $\psi\in\cD_F$ and all $x\in X_F$, }
y\mapsto f_{F,\psi} (x,y) \text{ is constant on } Y_F.
\]
\end{cor}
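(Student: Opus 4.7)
My plan is to deduce this corollary by combining Corollary~\ref{cor.const} with the transfer principle of Theorem~\ref{thm.trans}. The strategy is to package the statement in the displayed line as a $0$-ary $\cCexp$-condition and then apply transfer directly.

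First, I would note that by (the proof of) Corollary~\ref{cor.const}, the condition
\[
P(x) \coliff \forall y_1, y_2 \in Y\colon f(x, y_1) = f(x, y_2)
\]
on $x \in X$ is a $\cCexp$-condition, since $f(x,y_1) - f(x,y_2) \in \cCexp(X\times Y\times Y)$ and Proposition~\ref{locbasicop} closes $\cCexp$-conditions under $\wedge$ and $\forall$. By construction, for every $F\in\Loc_{\gg 1}$ and $\psi\in\cD_F$, $P_{F,\psi}(x)$ holds precisely when $y\mapsto f_{F,\psi}(x,y)$ is constant on $Y_F$.

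Next, I would apply Proposition~\ref{locbasicop}(3) once more to universally quantify out $x$, so that
\[
Q \coliff \forall x\in X\colon P(x)
\]
is a $0$-ary $\cCexp$-condition. By the definition of $P$ and by the convention that omitted indices are universally quantified (over $F\in\Loc_{\gg 1}$ and $\psi\in\cD_F$), the statement $Q_{F,\psi}$ is exactly ``for all $\psi\in\cD_F$ and all $x\in X_F$, $y\mapsto f_{F,\psi}(x,y)$ is constant on $Y_F$''; note that although $Q$ is indexed by $(F,\psi)$, it does not depend on $\psi$ since the quantifier $\forall \psi$ is already absorbed into the universal-in-$\psi$ reading of the condition.

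Finally, I would apply Theorem~\ref{thm.trans} to $Q$: there exists $M$ such that for every $F\in\Loc$ of residue field characteristic at least $M$, the truth of $Q_{F,\psi}$ (for all $\psi\in\cD_F$) depends only on the isomorphism class of the residue field of $F$. Since $Q_{F,\psi}$ is precisely the displayed statement in the corollary, this gives the conclusion. There is no real obstacle here: the whole point of the locus formalism developed above is that a constancy statement, once recognized as a $\cCexp$-condition via Corollary~\ref{cor.const} and closed off with a $\forall$-quantifier, falls automatically under the transfer principle.
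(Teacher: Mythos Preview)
Your proof is correct and takes the same route as the paper: recognize constancy as a $\cCexp$-condition via Corollary~\ref{cor.const} and then apply Theorem~\ref{thm.trans}. One small inaccuracy worth fixing: $Q_{F,\psi}$ does in general depend on $\psi$ (the $\forall\psi$ is not absorbed into $Q$); rather, the quantification over $\psi$ is supplied by the hypothesis of Theorem~\ref{thm.trans} itself, which is exactly what makes the argument go through.
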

\begin{proof}
By Corollary~\ref{cor.const}, $f(x,\cdot)$ being constant is a $\cCexp$-condition on $x$, so Theorem~\ref{thm.trans} applies.
\end{proof}

It might be tempting to replace our $\cCexp$-loci by a larger class which is additionally closed
under existential quantification, and maybe even under negation (thus turning it into a class of definable sets in a
first order language).
However, the following example shows that this would destroy the transfer principle.

\begin{example}\label{ex.noExist}
Transfer (in the style of Theorem~\ref{thm.trans}) does not hold for the statement
\begin{equation}\label{eq:no-trans}
\forall x \in \VF_F\colon \exists y \in \cO_F\colon \psi_F(x) = \psi_F(y).
\end{equation}
Indeed: The image $\psi(\cO_F)$ consists of the $p$-th roots of unity, where $p$ is the residue characteristic
of $F$. If $F$ itself has characteristic $p$, then the entire image $\psi(F)$ consists only of the $p$-th roots of
unity, whereas if $F$ has characteristic $0$, then $\psi(F)$ contains all $p^r$-th roots of unity for all $r > 1$.
Thus (\ref{eq:no-trans}) holds if and only if $F$ has positive characteristic.
\end{example}

\begin{example}\label{ex:existq}
We give an example that, in general,  $\cC$-loci are not closed under existential quantification.  
If $\cC$-loci would be closed under existential quantification, then in particular
$Y = \{y \in \VG \mid \exists x \in \VG\colon q^x - y = 0\}$ would be a $\cC$-locus, i.e., $Y = (Y_F)_F$ with
$Y_F = \{q_F^n \mid n \in \NN\}$. However, for any $H = (H_F)_F \in \cC(\VG)$ and any $F$, the zero set of $H_F$
is ultimately periodic, i.e., there exist $N, e \in \NN$ such that for $\mu > N$, whether $H_F(\mu)$ is zero or not only depends on $\mu \mod e$.
Indeed, this follows from Proposition~\ref{repar}: For $\mu$ big enough and in a fixed congruence class modulo some suitable $e$,
we have \[
H_F(\mu) = \sum_i c_{i}\mu^{a_i}q_F^{b_i\mu}
\]
for finitely many non-negative integers $a_i$, rational $b_i$ and complex $c_i$, and such a function is either constant equal to zero (namely when all $c_i$ are zero)
or has only finitely many zeros,
e.g. by \cite[Lemma~2.1.8]{CGH}. 
%(Note that closedness under negation, would, together with closedness under universal quantification,
%also imply closedness under existential quantification.)
\end{example}

\subsection{Eventual behavior and local constancy}
\label{sec:eventual}

The first new building block for $\cCexp$-conditions provided by this paper is
the following proposition about eventual behavior.
(One should not confuse this result with Theorem 14 of \cite{GordonHales} where $\mu$ is not allowed to depend on $x$.)

\begin{prop}[Eventual behavior]\label{for:all:large:mu}
If $P(x, \mu)$ is a $\cCexp$-condition on $x$ running over a definable set $X$ and on $\mu$ running over $\VG$, then the condition $Q(x)$ given by
\begin{equation}\label{eq:event}
Q(x) \iff \exists \mu_0\in \VG\colon \forall \mu \ge \mu_0\colon P(x, \mu)
\end{equation}
is a $\cCexp$-condition on $x$. Moreover, $\mu_0$ can be chosen to depend definably on $x$, i.e.,
there exists a definable function $\mu_0:X\to \VG$ such that for each $F\in \Loc_{\gg1}$, each $\psi\in \cD_F$ and each
$x\in X_F$, if $Q_{F,\psi}(x)$ holds, then
$P_{F,\psi}(x,\mu)$ holds for all $\mu\geq \mu_{0,F}(x)$.
\end{prop}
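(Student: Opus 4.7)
Suppose $P(x,\mu)$ is the condition $f(x,\mu)=0$ for some $f\in\cCexp(X\times\VG)$. The plan is to exhibit a definable function $\mu_0:X\to\VG$ such that whenever $Q(x)$ holds, $f(x,\mu)=0$ for every $\mu\geq\mu_0(x)$. Granted this, $Q(x)$ is equivalent to $\forall\mu\geq\mu_0(x)\colon P(x,\mu)$, and the latter is a $\cCexp$-condition by parts (1), (2), (3) of Proposition~\ref{locbasicop}, since ``$\mu\geq\mu_0(x)$'' is a definable condition.

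The key structural ingredient is a rectilinearization of $\cCexp$-functions along the $\VG$-variable $\mu$, of the type underlying the discussion in Example~\ref{ex:existq}. Concretely, I would apply it to obtain a finite definable partition $X=\bigsqcup_k X_k$, a definable function $\alpha:X\to\VG$, and positive integers $e_k$ such that on each $X_k$, for every residue $r\in\{0,\dots,e_k-1\}$ and every $\mu\geq\alpha(x)$ with $\mu\equiv r\pmod{e_k}$, the value $f(x,\mu)$ equals a finite sum
\begin{equation*}
\sum_{i=1}^{N_{k,r}} c_{k,r,i}(x)\,\mu^{a_{k,r,i}}\,q^{b_{k,r,i}\mu},
\end{equation*}
with $c_{k,r,i}\in\cCexp(X_k)$ and pairwise distinct exponent pairs $(a_{k,r,i},b_{k,r,i})\in\NN\times\QQ$ depending only on $k$, $r$ and $i$.

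Next, I would invoke the linear-independence argument recalled at the end of Example~\ref{ex:existq} (namely \cite[Lemma~2.1.8]{CGH}): a sum $\sum c_i\mu^{a_i}q^{b_i\mu}$ with distinct $(a_i,b_i)$ has only finitely many zeros as $\mu$ ranges over an arithmetic progression, unless all $c_i$ vanish. Consequently, for fixed $x\in X_k$ and fixed $r$, the condition ``$f(x,\mu)=0$ for all sufficiently large $\mu$ in the class $r\pmod{e_k}$'' is equivalent to the $\cCexp$-condition $\bigwedge_{i=1}^{N_{k,r}}c_{k,r,i}(x)=0$. Taking the conjunction over the finitely many pairs $(k,r)$, combined with the definable conditions ``$x\in X_k$'', exhibits $Q(x)$ as a positive Boolean combination of $\cCexp$-conditions on $x$, hence itself a $\cCexp$-condition by Proposition~\ref{locbasicop}(2),(4). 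Setting $\mu_0(x):=\alpha(x)$ then supplies the definable witness required by the ``Moreover'' clause: if $Q(x)$ holds then all coefficients $c_{k,r,i}(x)$ vanish for the unique $k$ with $x\in X_k$, and hence $f(x,\mu)=0$ for every $\mu\geq\alpha(x)$ regardless of residue class.

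The main obstacle is the rectilinearization step itself, i.e., obtaining the above normal form with uniform control over the exponents $a_{k,r,i},b_{k,r,i}$, the moduli $e_k$, the threshold $\alpha(x)$, and with the coefficients $c_{k,r,i}(x)$ being $\cCexp$-functions of $x$. This is the deep input, relying on the cell-decomposition and Presburger-type analysis for $\cCexp$-functions from \cite{CGH,CLexp,CHallp}; once it is available, the remainder of the argument is a purely formal application of the closure properties in Proposition~\ref{locbasicop}.
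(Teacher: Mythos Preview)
Your approach is essentially the paper's, but you gloss over one genuine technical point. The normal form you invoke --- a finite definable partition of $X$ and, on each piece, an expression $f(x,\mu)=\sum_i c_i(x)\mu^{a_i}q^{b_i\mu}$ valid for $\mu\ge\alpha(x)$ in a fixed residue class, with coefficients depending \emph{only on $x$} --- is not available in the present setting. As the paper remarks just before stating Proposition~\ref{repar}, Presburger-type control over the $\mu$-variable holds only after introducing additional residue-ring variables, because arbitrary ramification is allowed. Concretely, Proposition~\ref{repar} (drawn from \cite{CHallp}) yields a reparameterization $\sigma\colon X\times\VG\to U_{\rm par}\subset\RFss\times X\times\VG$, and the coefficients $c_{ij}$ in the normal form depend on $z=(\xi,x)\in\RFss\times X$, not just on $x$; likewise the cell bounds $\alpha_j,\beta_j$ are functions of $z$.

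The paper handles this as follows. After reducing to a single cell $A_j$: if the cell is bounded above ($\beta_j\ne+\infty$), Proposition~\ref{repar}(2) gives a definable $\beta_j^0(x)$ with $\beta_j(\xi,x)\le\beta_j^0(x)+\ord(N)$, so $\mu_0(x):=\beta_j^0(x)+\ord(N)+1$ works. If the cell is unbounded, then for each fixed $z$ the set $\{\mu:(z,\mu)\in A_j,\ H_{\rm par}(z,\mu)\ne0\}$ is either empty or unbounded (by the linear-independence lemma you cite), and since $\{\mu:H(x,\mu)\ne0\}$ is the union of such sets over the various $\xi$ lying above $x$, it too is empty or unbounded; hence one may take $\mu_0(x)=0$ on that piece. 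Your argument becomes correct once you insert this reparameterization step and argue fiberwise in $\xi$; the remainder is exactly as you wrote.
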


Recall that (\ref{eq:event}) defines a condition depending not only on $x$, but implicitly also on $F$ and $\psi$,
that is, for any $F\in \Loc_M$, any $\psi\in \cD_F$, and any $x\in X_F$, the condition $Q_{F,\psi}(x)$ holds if and only if there is an integer $\mu_0$ (possibly depending on $F,\psi,x$) such that for all integers $\mu$ with $\mu \ge \mu_0$ one has $P_{F,\psi}(x, \mu)$.

Thus the first part of the proposition states that in expressions defining $\cCexp$-conditions (as explained below Proposition~\ref{locbasicop}),
we are also allowed to use the quantifier ``$\forall \lambda \gg 1$'' (meaning for all sufficiently big $\lambda \in \VG$).

The proof of this proposition (below) essentially follows a reasoning often used in \cite{CGH}, whose
main ingredient is that the dependence of $P$ on $\mu$ is controlled by Presburger-definable data.
However, given that we work in a context allowing arbitrary ramification, this becomes true only after
introducing additional $\RFss$-variables. The following proposition summarizes several of the
results from \cite{CHallp} we shall need.

\begin{prop}\label{repar}
Let $X$ and $U\subset X\times \VG$ be definable and let $H$ be a $\cCexp$-function on $U$. Then there exists
a definable bijection $\sigma\colon U \to U_{\rm par} \subset \RFss \times X \times \VG$ commuting with the projection to $X \times \VG$
and a finite definable partition of $U_{\rm par}$ such that we have the following for each part $A_j$.
We set $H_{\rm par} := H \circ \sigma^{-1} \in \cCexp(U_{\rm par})$.
\begin{enumerate}
 \item The set $A_j$ is a Presburger Cell over $\RFss \times X$, i.e.
\[
A_j =  \{(z,\mu) \in B_j \times \VG \mid \alpha_{j}(z) \leq \mu \le  \beta_{j}(z) \wedge \mu \equiv e_j\bmod n_j  \}
\]
where $e_j$ and $n_j > 0$ are integers, $B_j \subset \RFss \times X$
is the projection of $A_j$, $\alpha_j$ is either a definable function $B_j\to\VG$ or constant equal to $-\infty$ and
$\beta_j$ is either a definable function $B_j\to\VG$ or constant equal to $+\infty$.
\item If $\alpha_j \ne -\infty$, there exists an integer $N \ge 1$ and a definable function
$\alpha^0_j \colon X \to \VG$ such that $\alpha_j(\xi,x) - \alpha^0_j(x) \in [0, \ord(N)]$ for all $(\xi,x) \in B_j$;
and similarly for $\beta_j$. (And without loss, $N$ can be taken the same for all $\alpha_j$ and $\beta_j$.)
\item
 There are finitely many functions $c_{ij}$ in $\cCexp(B_j)$ and distinct pairs $(a_{ij},b_{ij})$ with $a_{ij}$ nonnegative integers and $b_{ij}$ rational numbers such that for all $F\in \Loc_{ \gg 1}$, for each $\psi \in \cD_F$ and each $(z,\mu)\in A_{j,F}$,
\begin{equation}\label{eq.repar}
H_{{\rm par}, F,\psi} (z,\mu) = \sum_{i} c_{ij,F,\psi}(z) \mu^{a_{ij}} q_F^{b_{ij}\mu}.
\end{equation}
\end{enumerate}
(The map $\sigma$ is called a reparameterization.)
\end{prop}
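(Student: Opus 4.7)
The plan is to combine two results from \cite{CHallp}: cell decomposition for definable sets in the language $\ldp$ (allowing arbitrary ramification), and the structure theorem for $\cCexp$-functions along the last $\VG$-variable of such a cell decomposition.

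First I would apply cell decomposition to $U$, treating $\mu$ as the distinguished variable to be cellularized. Because $\ldp$ must accommodate arbitrary ramification, cell decomposition does not directly produce Presburger cells over $X$; instead one passes to an auxiliary parameter space by a reparameterization $\sigma$ introducing a finite product of residue-ring sorts $z\in\RFss$. After this reparameterization, every piece becomes a Presburger cell $A_j$ over $\RFss\times X$, which is exactly (1).

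For (2), the boundary functions $\alpha_j,\beta_j\colon B_j\to\VG$ arise as valuations of definable $\VF$-terms, and these terms depend on the coordinate $\xi\in\RFss$ only through choices of lifts via the $\ac_n$-maps. Varying $\xi$ therefore shifts such a valuation by at most $\ord N$, where $N$ is a uniform common multiple of the moduli of all residue rings used in the cells. Taking $\alpha_j^0(x):=\inf_\xi \alpha_j(\xi,x)$ (which, being a Presburger infimum of finitely many possibilities after further cell decomposition on $\RFss$, is definable in $x$ alone) yields the required bound; the same argument with $\sup$ in place of $\inf$ handles $\beta_j$.

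For (3), I would apply the description of $\cCexp$-functions restricted to a Presburger cell in a $\VG$-variable (again from \cite{CHallp}): on $A_j$, the $\cC$-part of Definition~\ref{motfun} takes the form $\sum_i c_i(z,x)\mu^{a_i}q^{b_i\mu}$ once the affine dependence of the exponents $\alpha_i$ on $\mu$ is unraveled using the cell structure, and the exponential sums involving $\psi$ in Definition~\ref{expfun} evaluate, by periodicity of $\psi$ on $\cO_F$ together with the built-in congruence condition $\mu\equiv e_j\bmod n_j$, to quantities that are independent of $\mu$ up to a finite refinement of the cells (which can be absorbed into a finer partition and into the coefficients $c_{ij}$ on $B_j$). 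Collecting like pairs $(a_{ij},b_{ij})$ and absorbing all $z$-dependence into $\cCexp$-coefficients on $B_j$ produces the expansion \eqref{eq.repar}. The main obstacle is to carry out these three steps coherently so that the resulting coefficients $c_{ij}$ genuinely stay of $\cCexp$-class on $B_j$, that the constants $N$ in (2) and the pairs $(a_{ij},b_{ij})$ in (3) can be chosen uniformly in $F\in\Loc_{\gg 1}$, and that only finitely many such pairs are needed across all cells; this is precisely what the corresponding cell-decomposition and structure-of-functions results in \cite{CHallp} provide, so I would invoke those directly rather than reprove them.
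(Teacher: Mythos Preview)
Your overall strategy---invoke the cell-decomposition and structure results from \cite{CHallp}---matches the paper's, but the execution differs in order and your argument for (2) has a genuine gap.

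The paper proceeds in the order (3), (1), (2). It first singles out the finitely many definable functions $f_i\colon U\to\VG$ appearing in the building data of $H$ (the exponents $\alpha_i$ and the factors $\beta_{ij}$ of Definition~\ref{motfun}) and applies \cite[Corollary~5.2.3]{CHallp} to $U$ together with this finite list. That corollary produces the reparameterization $\sigma$ and a partition on which each $f_{i,\mathrm{par}}$ is linear in $\mu$ with slope independent of $F$ and of $z$; the form \eqref{eq.repar} then drops out immediately from the definitions, giving (3). Only afterwards does the paper refine by Presburger cell decomposition to obtain (1), and then apply \cite[Corollary~5.2.4]{CHallp} to the resulting boundary functions $\alpha_j,\beta_j$ to obtain (2). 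Your order (1)--(2)--(3) is workable in principle, but note that (3) is not a separate structure theorem you can cite: it \emph{is} the content of applying 5.2.3 to the specific $\VG$-valued ingredients of $H$, so your appeal to ``the description of $\cCexp$-functions restricted to a Presburger cell'' is really a forward reference to the proposition being proved.

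The real problem is your argument for (2). The claim that the cell bounds $\alpha_j,\beta_j$ ``arise as valuations of definable $\VF$-terms'' is not justified and need not hold for general $\ldp$-definable functions $B_j\to\VG$. More seriously, your proposed $\alpha_j^0(x):=\inf_\xi\alpha_j(\xi,x)$ is an infimum over a residue-ring product whose cardinality depends on $F$; this is not a Presburger minimum over a fixed finite list, so definability of $\alpha_j^0$ in $x$ alone does not follow from a further cell decomposition on $\RFss$. The paper avoids this entirely by citing \cite[Corollary~5.2.4]{CHallp}, which is precisely the statement that such definable bounds differ from a function of $x$ alone by at most $\ord(N)$. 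You should invoke that corollary directly rather than attempt an ad-hoc argument.
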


\begin{proof}
Let $f_i\colon U \to \VG$ be the definable functions into the value group appearing in the definition of $H$.
Apply \cite[Corollary~5.2.3]{CHallp} to $U$ and to those functions $f_i$.
This yields a reparameterization $\sigma\colon U \to U_{\rm par} \subset \RFss \times X \times \VG$ and a
finite definable partition
of $U_{\rm par}$ such that
each $f_{i,\rm par} := f_i \circ \sigma^{-1}$ is linear over $\RFss \times X$ on each part $A_j$,
i.e., $f_{i,\rm par}(z, \cdot)\colon (A_j)_z \to \VG$ is linear for each fixed $z \in \RFss \times X$, with the coefficient of $\mu$ independent of $F$ and $z$.
This already implies (3).

By refining the partition using Presburger cell decomposition, we obtain (1). Finally, we
apply \cite[Corollary~5.2.4]{CHallp} to the functions $\alpha_j$ and $\beta_j$ bounding the cells.
This yields a partition of each $B_j$ such that, after using that partition to refine the partition $A_j$,
(2) holds.
\end{proof}

\begin{proof}[Proof of Proposition~\ref{for:all:large:mu}]
It is enough to find a definable $\mu_0$ as in the ``moreover'' part. Indeed, using such a $\mu_0$,
$Q$ can be expressed as
\begin{equation}\label{eq:falm}
Q(x) \iff \forall \mu \ge \mu_0(x)\colon P(x, \mu),
\end{equation}
which is a $\cCexp$-condition by Proposition~\ref{locbasicop} (1), (2) and (3) (since it can be written as
$\forall \mu \in \VG\colon (\mu < \mu_0(x) \vee  P(x, \mu))$).

Let $H$ in $\cCexp(X \times \VG)$ be a function such that $P$ holds iff $H = 0$.
To obtain $\mu_0$, we apply Proposition~\ref{repar} (with $U = X \times \VG$); we also use the notation from there.

We may reduce to the case where $H_{\rm par}$ is non-zero only on a single part $A_j$. Indeed,
the function $H_j(x, \mu)$ which is equal to $H(x,\mu)$ iff $\sigma(x, \mu) \in A_j$ and $0$ otherwise
is of $\cCexp$-class, and we can define $\mu_0$ to be the maximum of the $\mu_{0,j}$ corresponding to the $H_j$.

If the upper bound $\beta_{j}$ defining $A_j$ is not $+\infty$, then using
Proposition~\ref{repar} (2) we can take $\mu_0(x) := \beta^0_j(x) + \ord(N) + 1$,
so now assume $\beta_{j} = +\infty$. In that case, we claim that we can take $\mu_0(x) = 0$.
More precisely, we claim that for each $x$, the set $C_x := \{\mu \mid H(x,\mu) \ne 0\}$ either is empty or has no upper bound.

From
\[
H_{{\rm par}} (z,\mu) = \sum_{i} c_{ij}(z) \mu^{a_{ij}} q^{b_{ij}\mu}
\qquad\text{for } (z,\mu) \in A_j
\]
(see (\ref{eq.repar})) and using that all pairs $(a_{ij},b_{ij})$ are distinct,
we obtain that for each $z \in B_j$, the set
\[C'_z := \{\mu \in \VG \mid (z,\mu) \in A_j \wedge H_{\rm par}(z,\mu) \ne 0\}\]
is empty if $c_{ij}(z) = 0$ for each $i$, and unbounded otherwise. Now the claim follows, using
that $C_x$ is a union of sets of the form $C'_z$.
\end{proof}

Using Proposition~\ref{for:all:large:mu}, we can turn global $\cCexp$-properties of functions
into local ones. As an example, we prove several variants of: local constancy is
a property of $\cCexp$-class.

\begin{cor}[Local constancy]\label{cor.loc.const}
Let $f$ be in $\cCexp(X\times Y)$, for some definable sets $X$ and $Y\subset \VF^n$ for some $n > 0$. Then we have the following
(where for each $F$, on $Y_F\subset F^n$ we put the topology induced by the one on $F^n$).
\begin{enumerate}
 \item The set of $x \in X$ for which the function $y \mapsto f(x, y)$ is locally constant is a $\cCexp$-locus.
 \item The set of $(x,y) \in X \times Y$ such that the function $f(x, \cdot)$ is constant on a neighborhood of $y$
     is a $\cCexp$-locus.
Moreover, the radius of constancy can be chosen definably,
i.e., there exists a definable function $\varphi:X\times Y\to \VG$ such that
for all $F \in \Loc_{\gg1}$, for all $\psi \in \cD_F$,
for all $x \in X_F$ and for all $y \in Y_F$, if $f_{F,\psi}(x, \cdot)$ is constant on a neighborhood of $y$, then it is constant on the intersection of $Y_F$ with the ball
of valuative radius $\varphi_F(x, y)$ around $y$ in $F^n$.
\end{enumerate}
\end{cor}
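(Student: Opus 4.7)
The plan is to treat part (2) first and then deduce (1) by universal quantification over $y$.

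For part (2), fix the definable ``ball'' notation $B(y,\mu) := \{y' \in \VF^n \mid \ord(y'_i - y_i) \ge \mu \text{ for all } i\}$; these balls form a neighborhood basis of $y$ in $\VF_F^n$ for every $F$, so $f(x,\cdot)$ is constant on a neighborhood of $y$ in $Y$ if and only if there exists $\mu_0 \in \VG$ such that for all $\mu \ge \mu_0$, $f(x,\cdot)$ is constant on $Y \cap B(y,\mu)$. First I would observe that the condition
\[
P(x,y,\mu) \coliff \forall y' \in Y \cap B(y,\mu)\colon f(x,y') - f(x,y) = 0
\]
is a $\cCexp$-condition on $(x,y,\mu)$: the equality $f(x,y')=f(x,y)$ is a $\cCexp$-condition on $(x,y,y')$ (Proposition~\ref{locbasicop}(1) applied to the $\cCexp$-function $(x,y,y')\mapsto f(x,y')-f(x,y)$, combined with (4) to allow $\mu$ as a dummy parameter), the condition ``$y' \in Y \cap B(y,\mu)$'' is definable hence a $\cCexp$-condition, and universal quantification over $y'$ preserves the class by Proposition~\ref{locbasicop}(3).

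Next I would apply Proposition~\ref{for:all:large:mu} to $P(x,y,\mu)$ (with the role of ``$x$'' there played by the pair $(x,y)$). This directly yields that
\[
Q(x,y) \coliff \exists \mu_0\in \VG\colon \forall \mu \ge \mu_0\colon P(x,y,\mu)
\]
is a $\cCexp$-condition on $(x,y)$, and the ``moreover'' clause of Proposition~\ref{for:all:large:mu} provides a definable function $\varphi\colon X \times Y \to \VG$ such that whenever $Q_{F,\psi}(x,y)$ holds, $P_{F,\psi}(x,y,\mu)$ holds for all $\mu \ge \varphi_F(x,y)$; this is exactly the radius of constancy required in part (2).

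For part (1), the set in question is $\{x \in X \mid \forall y \in Y\colon Q(x,y)\}$, which is a $\cCexp$-locus by one more application of Proposition~\ref{locbasicop}(3). There is no real obstacle in the argument; the only point that needs a little care is the reduction of the topological statement ``constant on some neighborhood of $y$'' to the uniformly definable family of balls $B(y,\mu)$ so that Proposition~\ref{for:all:large:mu} can be invoked, and the verification that the definable $\mu_0$ produced by that proposition serves as the desired radius $\varphi$.
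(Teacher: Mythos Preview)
Your proof is correct and follows essentially the same approach as the paper: you express local constancy at $y$ via the eventual-behavior quantifier on the condition ``$f(x,\cdot)$ is constant on $Y\cap B(y,\mu)$'', apply Proposition~\ref{for:all:large:mu} to obtain both the $\cCexp$-locus and the definable radius $\varphi$, and then deduce (1) from (2) by universal quantification over $y$. The only difference is cosmetic: the paper cites Corollary~\ref{cor.const} for constancy on the ball, whereas you spell out the equivalent condition $\forall y'\colon f(x,y')=f(x,y)$ directly.
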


\begin{proof}
(1) follows from (2) and Proposition~\ref{locbasicop}, since $f(x, \cdot)$ being locally constant can be expressed by
\[
\forall y \in Y\colon \text{$f(x,\cdot)$ is constant on a neighborhood of $y$}.
\]

(2) That $f(x, \cdot)$ is constant on a neighborhood of $y$ can be expressed by
\[
\forall \lambda \gg 1\colon f(x,\cdot) \text{ is constant on $Y\cap B_\lambda(y)$},
\]
where $B_\lambda(y)$ is the ball around $y$ of valuative radius $\lambda$, which again
is a condition of $\cCexp$-class by Propositions~\ref{locbasicop} and \ref{for:all:large:mu} (and, if one wants,
Corollary~\ref{cor.const}).

Moreover, the witness $\mu_0$ for the quantifier ``$\forall \lambda \gg 1$'' provided by Proposition~\ref{for:all:large:mu}
is the desired definable function $\varphi$.
\end{proof}

In particular, we obtain a transfer principle for local constancy:

\begin{cor}[Transfer for local constancy]\label{trans.loc.cons}
Corollary~\ref{trans.cons} still holds if one replaces
``constant'' by ``locally constant''.
\end{cor}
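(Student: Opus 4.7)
The plan is to imitate the proof of Corollary~\ref{trans.cons} verbatim, with the constancy locus replaced by the local constancy locus. That is, given $f \in \cCexp(X \times Y)$, I would form the condition $P(x)$ stating that $y \mapsto f(x,y)$ is locally constant on $Y$. By Corollary~\ref{cor.loc.const}(1), this $P(x)$ is a $\cCexp$-condition on $x \in X$.

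Next, to bring this into the form required by Theorem~\ref{thm.trans}, I would pass to the $0$-ary $\cCexp$-condition $\forall x \in X\colon P(x)$, which is again a $\cCexp$-condition by Proposition~\ref{locbasicop}(3). Applying Theorem~\ref{thm.trans} to this $0$-ary condition yields an integer $M$ such that for every $F \in \Loc$ with residue characteristic at least $M$, whether ``for all $\psi \in \cD_F$ and all $x \in X_F$, $y \mapsto f_{F,\psi}(x,y)$ is locally constant on $Y_F$'' holds depends only on the isomorphism class of the residue field of $F$. This is exactly the statement we want.

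There is essentially no obstacle, since all the real work has been absorbed into Corollary~\ref{cor.loc.const}(1) (which itself relies on Proposition~\ref{for:all:large:mu}, the genuinely new ingredient) and Theorem~\ref{thm.trans}. The only point worth being mildly careful about is that ``locally constant'' refers to the topology on $Y_F \subset F^n$ inherited from $F^n$, which is precisely the interpretation used in Corollary~\ref{cor.loc.const}; in particular, the assumption $Y \subset \VF^n$ from Corollary~\ref{cor.loc.const} is inherited tacitly here.
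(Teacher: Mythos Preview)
Your proposal is correct and follows essentially the same approach as the paper: use Corollary~\ref{cor.loc.const}(1) to see that local constancy of $f(x,\cdot)$ is a $\cCexp$-condition on $x$, then apply Theorem~\ref{thm.trans}. The paper's proof is terser (it does not spell out the passage to the $0$-ary condition via Proposition~\ref{locbasicop}(3), since this was already remarked after Theorem~\ref{thm.trans}), but the content is identical.
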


\begin{proof}
Apply Theorem~\ref{thm.trans} to ``$f(x, \cdot)$ is locally constant'', which is a $\cCexp$-property (about $x$)
by Corollary~\ref{cor.loc.const}.
\end{proof}

\subsection{Integration}

We put the Haar measure on $F$ so that $\cO_F$ has measure $1$; on
$\RF_{m,F}$ and on $\ZZ$, we use the counting measure and for $X\subset \VF^*\times \RFss\times \VG^*$ we use the measure on $X_F$ induced by the product measure on $\VF_F^*\times \RFss_F\times \ZZ^*$. Likewise, we put the discrete topology on $\ZZ$ and on $\RFss_F$, the valuation topology on $F$, the product topology on $F^*\times \RFss_F\times \ZZ^*$, and the subset topology on $X_F$.

Maybe the most important aspect of $\cCexp$-functions is that they have nice and natural properties related to integration, see e.g.~the following theorem stating stability under integration, generalizing Theorem 9.1.4 of \cite{CLexp}, by including also $p$-adic fields of small residue field characteristic.

\begin{thm}[{Integration; \cite[Theorems 4.1.1, 4.4.2]{CHallp}}]\label{thm:mot.int.}
Let $f$ be in $\cCexp(X\times Y)$, for some definable sets $X$ and $Y$.
\begin{enumerate}
 \item The set of $x$ such that $y\mapsto f(x,y)$ is $L^1$-integrable over $Y$ is a $\cCexp$-locus.
 \item There exists a function $I$ in $\cCexp(X)$ such that
$$I(x) = \int_{y\in Y}f(x,y)\,|dy|$$
whenever $y\mapsto f(x,y)$ is $L^1$-integrable over $Y$. In more detail,
for every $F \in \Loc_{\gg1}$, every $\psi \in \cD_F$ and
every $x \in X_F$, if $y \mapsto f_{F,\psi}(x, y)$ is $L^1$-integrable, then $I_{F,\psi}(x) = \int_{y\in Y_{F,\psi}}f_{F,\psi}(x,y)\,|dy|$.

\end{enumerate}
\end{thm}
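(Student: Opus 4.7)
The plan is to reduce both statements to explicit computations on cells, following the strategy of \cite{CHallp}. Concretely, I would first apply the reparametrization/cell decomposition of Proposition~\ref{repar} (suitably extended to the valued field sort and to the $Y$-variables simultaneously) to partition $X\times Y$ into finitely many cells on which $f$ takes the standard form of Definition~\ref{expfun} with all sort-dependences separated. Since $L^1$-integrability on $Y$ is the conjunction of $L^1$-integrability on each cell, Proposition~\ref{locbasicop} lets me work one cell at a time and recombine at the end.

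Next I would carry out Fubini, integrating one sort at a time. Integration over the residue ring sorts is a finite sum and produces a $\cCexp$-function on the nose. For each valued field variable, the cell decomposition provides a definable change of variables after which the integrand factors as a function of $x$ and of the valuation and angular components of the integration variable, times a power of $q$ coming from the Jacobian; this converts a $\VF$-integration into a $\VG$-summation. After all these steps, the problem reduces to summing over $\VG$-variables expressions of the form $\sum_{\mu} c_i(x)\,\mu^{a_i}\,q^{b_i\mu}$ with $c_i\in\cCexp$, $a_i\in\NN$ and $b_i\in\QQ$, over a Presburger cell.

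For such sums in a single $\VG$-variable, absolute summability is a simple combinatorial condition on the exponents $(a_i,b_i)$ and the cell bounds: on an unbounded direction one needs $b_i<0$ whenever $c_i(x)\ne 0$. Thus $L^1$-integrability translates iteratively into a finite boolean combination of equalities $c_i(x)=0$, which by Proposition~\ref{locbasicop} defines a $\cCexp$-locus, proving (1). On this locus one has a closed form for the iterated sum, again of geometric-exponential shape, yielding the $\cCexp$-function $I$ of part~(2).

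The main obstacle is that $|f|$ is in general not a $\cCexp$-function, because of the additive character, so the ``oriented'' sum and the absolute sum need not have the same convergence behavior a~priori. One must therefore produce the cell decomposition in a form where the oscillatory factor on each final piece is a single pure character value of modulus $1$, so that absolute summability is governed entirely by the non-oscillating $\cC$-part. This is exactly the content of \cite[Theorem~4.1.1]{CHallp}, which also deals with the ``null-function'' issue (different $\cCexp$-presentations of the same function giving a~priori different formal integrability domains) and with the uniformity over local fields of arbitrary ramification, including those whose residue characteristic divides the integers $n$ appearing in the residue rings $\RF_n$.
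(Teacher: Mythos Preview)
The paper does not prove this theorem at all: it is stated with the label ``Integration; \cite[Theorems 4.1.1, 4.4.2]{CHallp}'' and is simply quoted as an input result from \cite{CHallp}, with no proof environment following. So there is nothing in the paper to compare your proposal against.

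Your sketch is a reasonable high-level outline of the strategy that \cite{CHallp} (and, before it, \cite{CLexp}) actually uses: cell decomposition/reparametrization, Fubini sort by sort, reduction of $\VF$-integration to $\VG$-summation, and explicit evaluation of geometric-exponential sums. You also correctly identify the genuine difficulty, namely that $|f|$ is not of $\cCexp$-class because of the additive character, and that one needs the cell decomposition to isolate a single pure character on each piece so that absolute summability is controlled by the $\cC$-part; and you correctly note that this is precisely what \cite[Theorem~4.1.1]{CHallp} provides. For the purposes of this paper, however, the appropriate ``proof'' is simply the citation, as the authors do.
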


Note that Theorem~\ref{thm:mot.int.} immediately implies a generalized version, where the
set we are integrating over is allowed to depend on $x$: If $f$ is a
$\cCexp$-function on $W \subset X \times Y$ (and $W_x \subset Y$ denotes the fiber of $W$ at $x \in X$), then
$\int_{W_x} f(x, y)\,|dy| = \int_{Y} f(x, y)\cdot 1_W(x,y)\,|dy|$, where $1_W$ is the characteristic function of $W$.
Thus by applying the theorem to $f(x, y)\cdot 1_W(x,y)$, we obtain that $L^1$-integrability of $f(x,\cdot)$ on
$W_x$ is a $\cCexp$-condition, and the value of that integral (where it exists) is equal to $I(x)$ for some function $I$ of $\cCexp$-class.

In \cite{CGH}, we proved that the condition that $f(x,\cdot)$ is locally integrable is also a $\cCexp$-condition of the corresponding class.
This can be deduced from Theorem~\ref{thm:mot.int.} in a similar way as for local constancy (Corollary~\ref{cor.loc.const}).
However note also that since integrability on all small balls is equivalent to integrability on all balls, one doesn't even
need Proposition~\ref{for:all:large:mu}.

\section{Bounds and approximate suprema}
\label{sec:bounds}

\subsection{Results about bounds and approximate suprema}

One result of \cite{CGH} (generalized to the present context in \cite{CHallp}) is that a $\cCexp$-function being bounded is a $\cCexp$-condition
(see Theorem~\ref{thm:fam} (1)). In this section, we describe how bounds depend on the field $F$ and on additional parameters;
this generalizes results from Appendix B of \cite{ShinTemp}.
We give three main results on bounds, in increasing strength and complexity, with Theorem \ref{thm:fam:gen} containing the core result, with the hardest and longest proof of this paper.

\begin{thm}[Bounds]\label{thm:presburger-fam}
Let $H$ be in  $\cCexp(W \times \VG^n)$,
where $W$ is a definable set and $n\geq 0$.  Then
there exist an integer $b$ and a definable $d \in \VG$ such that for all $F\in \Loc_{ \gg1}$, all $\psi \in \cD_F$ and all $\lambda\in \Z^n$, the following holds.

If the function
$$
w\mapsto |H_{F,\psi} (w,\lambda) |_{\C}
$$
is bounded on $W_F$, then one actually has
\begin{equation}\label{upper}
| H_{F,\psi} (w,\lambda) |_{\C}  \le q_F^{ b \|\lambda\| +  d} \mbox{ for all } w \in  W_F,
\end{equation}
where $\|\lambda\| = \sum_{i=1}^n |\lambda_i|_\CC$ and where $|\cdot|_\C$ is the norm on $\C$.
\end{thm}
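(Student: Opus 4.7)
The plan is induction on $n$. The base case $n = 0$ is a refinement of the boundedness statement in \cite[Theorem 4.4.2]{CHallp}, namely that a bounded $\cCexp$-function on a definable set admits a bound of the form $q_F^d$ for some integer $d$ depending only on the function; this is essentially the content of Theorem~\ref{thm:fam}(1) below. Everything else is a reduction to this case via the reparametrization Proposition~\ref{repar}.

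For the inductive step, I would apply Proposition~\ref{repar} iteratively, once for each of the $n$ value-group variables $\lambda_k$ (treating $W$ together with the remaining $\lambda_\ell$ as the parameter on each iteration). After these $n$ applications, one obtains a finite partition of an auxiliary definable set inside $\RFss \times W \times \VG^n$ into Presburger cells, on each of which $H$ takes the explicit polynomial-exponential form
\begin{equation*}
H(w,\lambda) = \sum_i c_i(z,w)\prod_{k=1}^n \lambda_k^{a_{i,k}}\, q_F^{b_{i,k}\lambda_k},
\end{equation*}
where the exponents $a_{i,k} \in \ZZ_{\ge 0}$ and $b_{i,k} \in \QQ$ range over a finite set depending only on $H$ (hence are bounded uniformly in $F$, $\psi$, $\lambda$), the $c_i$ are $\cCexp$-functions of $(z,w)$ on the cell base, independent of $\lambda$, and $z \in \RFss$ is determined by $(w,\lambda)$. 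Since $|\lambda_k|^{a_{i,k}} \le q_F^{a_{i,k}|\lambda_k|}$ for $q_F \ge 2$, the polynomial-exponential factor is bounded by $q_F^{b'\|\lambda\|}$ for some integer $b'$ depending only on the finite set of pairs $(a_{i,k}, b_{i,k})_k$.

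The central step is then to bound each coefficient $|c_i(z,w)|$ uniformly. The strategy is to exploit the distinctness of the multi-indices $(a_{i,k}, b_{i,k})_k$: the polynomial-exponential functions $\lambda \mapsto \prod_k \lambda_k^{a_{i,k}} q_F^{b_{i,k}\lambda_k}$ are linearly independent over $\CC$ on $\ZZ^n$, so a Vandermonde-type linear inversion expresses each $c_i(z,w)$ as a finite $\CC$-linear combination of values $H(w,\lambda^{(s)})$ at test values $\lambda^{(s)}$ drawn from the Presburger cell containing the given $\lambda$. Applying the $n=0$ base case to each resulting $c_i$, viewed as a $\cCexp$-function on the cell base, produces a bound $|c_i(z,w)| \le q_F^{d_i}$ for some integer $d_i$ independent of $F, \psi, \lambda$; combining with the polynomial-exponential estimate and summing over the finitely many cells and terms then yields $|H_{F,\psi}(w,\lambda)| \le q_F^{b\|\lambda\| + d}$ with integers $b$, $d$ depending only on $H$.

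The main obstacle is the Vandermonde step: the boundedness hypothesis in the statement is only pointwise in $\lambda$, so the test values $\lambda^{(s)}$ must be chosen with care -- ideally inside the Presburger cell of the given $\lambda$, so that boundedness of $H(\cdot,\lambda^{(s)})$ can be derived by induction or from the original cell structure without circularity. Moreover, the entries of the inverse Vandermonde matrix (and the coefficients produced thereby) must be tracked in a way that contributes only to the additive constant $d$ in the final bound, not to the multiplicative exponent $b$; making this scaling precise, in a manner uniform in $F$, $\psi$, and $\lambda$, is where the bulk of the technical work lies.
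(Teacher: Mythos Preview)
Your inductive step has a genuine gap. The Vandermonde inversion would express each coefficient $c_i(z,w)$ as a linear combination of values $H(w,\lambda^{(s)})$ at several test points $\lambda^{(s)}$, and you then want to apply the $n=0$ case to each $c_i$. But the hypothesis of the theorem is that $H_{F,\psi}(\cdot,\lambda)$ is bounded for \emph{one specific} $\lambda$ only; nothing gives boundedness at the $\lambda^{(s)}$, and the individual coefficients $c_i$ need not be bounded at all. For instance, with $n=1$ take $H(w,\lambda) = f(w) - f(w)\,q^{-\lambda_0}\,q^{\lambda}$ for some unbounded $\cCexp$-function $f$ on $W$ and a fixed integer $\lambda_0$: here $H(\cdot,\lambda_0)\equiv 0$ is bounded, yet $c_1=f$ and $c_2=-f\,q^{-\lambda_0}$ are both unbounded, and $H(\cdot,\lambda)$ is unbounded for every $\lambda\ne\lambda_0$. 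No choice of test values inside the cell avoids this, so the reduction to the $n=0$ case applied to the $c_i$ collapses. (Incidentally, your base case is not Theorem~\ref{thm:fam}(1), which only says that boundedness is a $\cCexp$-condition; the uniform bound $q_F^d$ is Theorem~\ref{thm:fam}(2), itself resting on the approximate-suprema Theorem~\ref{thm:fam:gen}.)

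The paper's proof avoids this difficulty by going in the opposite direction: it first proves the more general Theorem~\ref{thm:fam}(2) for an arbitrary definable parameter set $X$, obtaining a definable function $\alpha\colon X\to\VG$ with $|H(w,x)|\le q_F^{\alpha(x)}$ whenever $H(\cdot,x)$ is bounded; then it specializes to $X=\VG^n$ and uses that any definable $\alpha\colon\VG^n\to\VG$ is, piecewise and up to an error bounded by $\ord(c)$, linear with rational coefficients independent of $F$ (\cite[Corollary~5.2.5]{CHallp}), which immediately yields the shape $b\|\lambda\|+d$. All the analytic work is concentrated in Theorem~\ref{thm:fam:gen}, and no Vandermonde-type recovery of coefficients from a single bounded slice is attempted.
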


By a definable $d \in \VG$, we mean a definable set $d=(d_F)_{F\in\Loc_M}$ where $d_F$ is a singleton in $\VG_F$ for each $F$, where we identify $d_F$ with the integer $d_F^0$ with $d_F=\{d_F^0\}$. Since we do not use parameters in our language until the appendix, a definable $d\in\VG$ can be bounded by $a+\ord (c)$ for some integers $a$ and $c \ge 1$ depending on $d$ but not on $F$, that is, $d_F\leq a+\ord_F (c)$ for each $F$; indeed, this follows from quantifier elimination \cite[Theorem 5.1.1]{CHallp}. If one is moreover only interested in $F$ of sufficiently big residue characteristic, then
one may assume $\ord_F (c)$ to be zero, so that the exponent in (\ref{upper}) becomes $b\|\lambda\| + a$ for some integers $a,b$ not depending on $F$.

In the appendix, $d$ can be more general depending on the constants added to $\ldp$, and on the requirements (axioms) put on the interpretations of the constant symbols.

We observe that in the case with $n=0$, and still using that we do not use parameters,
Theorem \ref{thm:presburger-fam} yields that
for any $\cCexp$-function $H$ on $W$, there exist integers $a, c \ge 1$ such that
for any $F$ (in $\Loc_{\gg1}$) and any $\psi$ for which $H_{F,\psi}$ is bounded, the bound can be taken to be $q_F^{a + \ord_F(c)}$.
Note also that $\ord_F(c)$ is relevant only when one is interested in uniformity in $F$ for small residue characteristic and
arbitrary (but finite) ramification.

Part (2) of the following theorem generalizes Theorem~\ref{thm:presburger-fam}
by allowing the domain of $\lambda$ to be an arbitrary definable set $X$ (instead of just $\VG^n$).
In part (1), we recall the result from \cite{CGH,CHallp} about boundedness, since it fits nicely with part (2).

\begin{thm}[Bounds]\label{thm:fam}
Let $H$ be in $\cCexp(W \times X)$,
where $W$ and $X$ are definable sets.
\begin{enumerate}
\item
The set of $x \in X$ such that the function $w\mapsto | H(w,x) |_{\C}$ is bounded is a $\cCexp$-locus.
\item
There exists a definable function
$\alpha:X\to\VG$ such that for every $F \in \Loc_{ \gg 1}$, every $\psi \in \cD_F$ and every
$x \in X_F$, if the function
$$
w\mapsto | H_{F,\psi}(w,x) |_{\C}
$$
is bounded on $W_F$, then one has
$$
| H_{F,\psi}(w,x) |_{\C}  \le q_F^{\alpha_F(x)}  \mbox{ for all } w \in  W_F.
$$
\end{enumerate}
\end{thm}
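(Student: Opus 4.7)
The plan is to dispatch Part~(1) to the existing literature and derive Part~(2) from Theorem~\ref{thm:presburger-fam} by reducing the general parameter space $X$ to the Presburger case. Part~(1) is the (Bdd) clause of Theorem~4.4.2 of \cite{CHallp}, which directly asserts that the set of $x\in X$ for which $w\mapsto|H(w,x)|_\C$ is bounded is a $\cCexp$-locus; I would invoke it as is.

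For Part~(2), since $X$ is definable I may place $X\subseteq\VF^a\times\RFss\times\VG^n$ and write $x=(y,\xi,\lambda)$ with $y\in\VF^a$, $\xi\in\RFss$, and $\lambda\in\VG^n$. Viewing $H$ as an element of $\cCexp((W\times\VF^a\times\RFss)\times\VG^n)$, Theorem~\ref{thm:presburger-fam} yields an integer $b$ and a definable $d\in\VG$ such that, whenever $(w,y,\xi)\mapsto|H(w,y,\xi,\lambda)|_\C$ is bounded on $W\times\VF^a\times\RFss$ for a fixed $\lambda$, it is bounded by $q_F^{b\|\lambda\|+d}$. The candidate definable bound is then $\alpha(x):=b\|\lambda(x)\|+d$, with $\lambda(x)$ the projection to $\VG^n$. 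The delicate point is that Theorem~\ref{thm:presburger-fam} requires joint boundedness in $(w,y,\xi)$ for fixed $\lambda$, whereas our hypothesis only provides pointwise-in-$x$ boundedness. For the $\RFss$-coordinate $\xi$ this gap closes for free because $\RFss_F$ is finite: pointwise-in-$\xi$ boundedness implies joint boundedness, and the supremum over $\xi$ only worsens $d$ by an $\ord$ of a bounded integer, which fits the $a+\ord(c)$ form noted after Theorem~\ref{thm:presburger-fam}. For the $\VF$-coordinates $y$, I would induct on $a$, using the cell decomposition in the language $\ldp$ together with Proposition~\ref{repar} and Corollary~\ref{cor.loc.const} to encode each $\VF$-coordinate of $y$ via $\RFss$- and $\VG$-data (center, radius, $\ac$-values), reducing to the already-handled base case $X\subseteq\RFss\times\VG^n$.

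The main obstacle is this $\VF$-absorption step: a $\cCexp$-function can depend genuinely on fine $\VF$-behavior through character factors $\psi\circ f$ and through $\VF$-derived valuations occurring in its $\cC$-data, so the reduction to $\RFss\times\VG$-data is not automatic and requires a parameterized reparameterization argument built on the cell decomposition of \cite{CHallp}. Once this reduction is carried out, Theorem~\ref{thm:presburger-fam} delivers the bound on each cell, and a definable maximum over the finitely many cells produces the desired $\alpha(x)\in\VG$.
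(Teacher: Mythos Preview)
Part~(1) is handled exactly as in the paper: the (Bdd) clause of \cite[Theorem~4.4.2]{CHallp}. No issue there.

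Part~(2), however, has a genuine problem. First, note that in this paper Theorem~\ref{thm:presburger-fam} is \emph{derived from} Theorem~\ref{thm:fam}~(2) (by specializing to $X=\VG^n$ and then bounding the resulting definable $\alpha$ linearly via \cite[Corollary~5.2.5]{CHallp}). So invoking Theorem~\ref{thm:presburger-fam} to prove Theorem~\ref{thm:fam}~(2) is circular unless you supply an independent proof of the former, which you do not.

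Even setting the circularity aside, the ``$\VF$-absorption'' step you flag as the main obstacle is not merely delicate---it does not go through as sketched. Cell decomposition does not encode a $\VF$-coordinate purely by $\RFss\times\VG$-data: the center of a cell is a $\VF$-valued definable function of the remaining coordinates, so you have not reduced the number of $\VF$-variables. More seriously, a $\cCexp$-function carries character factors $\psi(g(w,y,\xi,\lambda))$ which are genuinely non-constant along the $\VF$-fibers of any cell, so $H$ cannot be made to depend on $y$ only through $(\text{radius},\ac)$-data. The device that actually controls $|H|$ on $\VF$-fibers by $\RFss\times\VG$-data is Proposition~\ref{IML} (equivalently Corollary~\ref{from.exp.to.e.IML}); invoking it is essentially the hard core of Theorem~\ref{thm:fam:gen}, not a routine reparameterization.

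The paper's route avoids all of this: it first proves the approximate-supremum Theorem~\ref{thm:fam:gen}, obtaining a non-negative $G\in\cCexp(X)$ with $\sup_{w}|H(w,x)|_\C^2\le G(x)$ whenever $H(\cdot,x)$ is bounded; then it bounds an arbitrary $\cCexp$-function $G(x)$ by $q^{\alpha(x)}$ for a definable $\alpha$, simply by unwinding Definitions~\ref{motfun} and~\ref{expfun} (bounding $\#Y_{i,x}$, $q^{\alpha_i(x)}$, $|\beta_{ij}(x)|$, $|1-q^{a_{i\ell}}|^{-1}$, and the character sums termwise). That second step is elementary; the depth is all in Theorem~\ref{thm:fam:gen}, which is where Proposition~\ref{IML} enters.
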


This theorem implies Theorem~\ref{thm:presburger-fam} as follows.

\begin{proof}[Proof of Theorem \ref{thm:presburger-fam}]
Applying Theorem \ref{thm:fam} (2) with $X = \VG^n$ yields a definable function $\alpha:X\to\VG$. To obtain Theorem~\ref{thm:presburger-fam},
we need to know that we can replace $\alpha_F(\lambda)$ by $b\|\lambda\| + d_F$ for some definable $d$ and some integer $b$.

Note that when allowing big ramification, a definable function $\alpha$ is not necessarily piecewise linear.
However, by \cite[Corollary~5.2.5]{CHallp}, and up to working piecewise with finitely many definable pieces $X_i \subset X$, it differs from a linear definable function $\ell\colon X_i \to \VG$ by at most $\ord(c)$ for some integer $c \ge 1$. More precisely, for every $F \in \Loc_{\gg 1}$ and every $\lambda \in X_{i,F}$, $|\alpha_F(\lambda) - \ell_F(\lambda)| \le \ord_F(c)$,
where $c$ does not depend on $F$.
%(So in particular $\ord_F(c) = 0$ whenever the residue characteristic of $F$ does not divide $c$.)
Moreover, we may assume that the coefficients of $\ell_F$ (which are rational numbers) are independent of $F$ and the constant term is given by a definable constant $d \in \VG$.

Thus on each piece $X_i$, we obtain a bound of the form $b\|\lambda\| + d$. Finally we take maxima of the so-obtained finite collection of $b$ and $d$, yielding a bound of the desired form.
\end{proof}

In general, the supremum over $w$ of $| H(w,x) |_{\C}$ is not a $\cCexp$-function in $x$.
However, it can be reasonably well approximated by the square root of a $\cCexp$-function.
The precise statement is the following.

\begin{thm}[Approximate suprema]\label{thm:fam:gen}
Let $H$ be in $\cCexp(W \times X)$,
where $W$ and $X$ are definable sets.
Then there exist a definable $d \in \VG$ and a function $G$ in $\cCexp(X)$
taking non-negative real values such that
the following holds: For each $F \in \Loc_{\gg1}$, each $\psi \in D_F$ and each $x \in X_F$ for which
the function
\begin{equation*}%\label{eq:b1}
w\mapsto  H_{F,\psi}(w,x)
\end{equation*}
is bounded on $W_F$, one has
\begin{equation}\label{eq:sq1}
\sup_{w\in W_F} |H_{F,\psi}(w,x)|_\C^2 \leq G_{F,\psi}(x) \le q_F^{d_F} \sup_{w\in W_F} |H_{F,\psi}(w,x)|_\C^2.
\end{equation}
\end{thm}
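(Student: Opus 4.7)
The plan is to construct $G$ piecewise after a cell decomposition in the $w$-variables, using that the supremum of a non-negative real $\cCexp$-function can be captured on each piece by evaluating it at finitely many definable ``critical" points. A preliminary reduction: the function $K(w,x) := |H(w,x)|_\CC^2 = H(w,x)\,\overline{H(w,x)}$ is again in $\cCexp(W\times X)$, since complex conjugation of an expression of the form~\eqref{fexp} replaces $\psi$ by the additive character $\bar\psi \in \cD_F$, and the product of two $\cCexp$-functions is $\cCexp$. Moreover, $K$ is non-negative and real-valued, so it suffices to find $G \in \cCexp(X)$ satisfying $\sup_w K(w,x) \le G(x) \le q_F^{d_F} \sup_w K(w,x)$ on the (by Theorem~\ref{thm:fam}\,(1), $\cCexp$-) locus where $K(\cdot,x)$ is bounded.

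Next, I would apply a cell decomposition and reparameterization to $K$ in the $w$-variables, using Proposition~\ref{repar} for the value-group directions together with the analogous tools of \cite{CHallp} for the valued-field and residue-ring directions. This produces a finite definable partition $W\times X = \bigsqcup_{j=1}^N C_j$ such that on each piece, $K$ has the explicit form of~\eqref{eq.repar} in $\mu\in\VG$, with $\cCexp$-coefficients depending on $x$ and on auxiliary residue-ring parameters $z$. On each piece $C_j$, I would construct $G_j \in \cCexp(X)$ with $\sup_{(w,x)\in C_j} K(w,x) \le G_j(x) \le q_F^{d_j} \sup_{(w,x)\in C_j} K(w,x)$ by (i)~replacing the supremum over the $\RFss$-parameters by the sum, losing only a factor bounded by $\#\RFss \le q_F^{e}$ for a definable $e\in\VG$ (and turning the sum into a $\cCexp$-function of $x$ via Theorem~\ref{thm:mot.int.}); and (ii)~in the $\VG$-direction, evaluating the sum $\sum_i c_{ij}(x,z)\,\mu^{a_{ij}} q^{b_{ij}\mu}$ at the endpoints of the Presburger cell (absorbing the $\ord(N)$-correction from Proposition~\ref{repar}(2) into $q^{d_j}$) and at finitely many definable ``switching'' points where the dominant monomial changes, all of which give $\cCexp$-values in $x$. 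For $\VF$-directions, I would first reparameterize by the leading term to reduce to the preceding $\VG$- and $\RFss$-cases.

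Finally, I would set $G := \sum_{j=1}^N G_j \in \cCexp(X)$. Using $\max_j a_j \le \sum_j a_j \le N \max_j a_j$ for $a_j \ge 0$ together with $\sup_w K = \max_j \sup_{C_j} K$, this yields $\sup_w K \le G \le N \cdot q_F^{\max_j d_j}\sup_w K$, and hence the claim with $d_F := \ord_F(N) + \max_j d_j$, a definable element of $\VG$. The main obstacle is the \emph{lower} direction in each per-piece estimate: a straightforward modulus bound on character sums easily produces an upper $\cCexp$-majorant of $K$, but it tends to overshoot the true supremum by a factor not bounded by $q^d$ because of destructive interference among phases. The key point is that using $|H|^2$ rather than $|H|$ expands the character oscillation into a double sum whose phases $g_i(x,y_1)-g_j(x,y_2)$ become rigidly controlled by the cell decomposition, so that the $\cCexp$-majorant written down is in fact tight to the true supremum up to a definable factor of the form $q_F^{d_F}$, as required.
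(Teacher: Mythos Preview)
Your overall architecture matches the paper's: pass to $K=|H|^2$, eliminate the $\VF$-variables in favor of $\RFss\times\VG^*$, handle $\RFss$ by summing (losing only $q^{\ord(n)+1}$), handle $\VG$ by evaluating at finitely many definable points, and sum over pieces. The combination step at the end is fine.

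The genuine gap is in the $\VF$-reduction. ``Reparameterize by the leading term'' does not give what you need. What the paper actually invokes here is Proposition~\ref{IML} (the ``interpolation'' result, \cite[Proposition~4.6.1]{CHallp}, going back to \cite[Proposition~4.5.8]{CGH}): it produces a surjection $\varphi\colon U\to V\subset X\times\RFss\times\VG^*$ and finitely many $G_i\in\cCexp(V)$ with $H=\sum_i G_i(\varphi)\psi(h_i)$ \emph{together with} the volume inequality $\Vol(U_{x,r})\le q^d\Vol(W_{x,r})$, where $W_{x,r}$ is the set of $y$ on which $\max_i|G_i|\le |H|$. This last point is the whole difficulty: it guarantees that $\tilde H:=\sum_i|G_i|^2$ is not only an upper bound for $\sup_y|H|^2$ (which is trivial) but also a lower bound up to the constant $N$. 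Your final paragraph identifies exactly this obstacle and then asserts that squaring $H$ makes the phases ``rigidly controlled by the cell decomposition''; but expanding $|H|^2$ still leaves cross-terms $G_i\bar G_j\psi(h_i-h_j)$, and nothing in a bare cell decomposition prevents systematic destructive interference. The content of Proposition~\ref{IML} is precisely that on a set of comparable volume there is no such interference, and this is a nontrivial structural fact about $\cCexp$-functions, not a formal consequence of squaring.

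A secondary point: your $\VG$-step (``endpoints and finitely many switching points where the dominant monomial changes'') is the right shape but underspecified. The paper proves two dedicated lemmas (Lemmas~\ref{lem:Z-ubd} and~\ref{lem:Z-bd}) showing that for $h(w)=\sum_i c_i w^{a_i}q^{b_iw}$ one has $\sup_w|h|\le m\max_i|h(w_i)|$ with $m,\ell$ and the Presburger functions $w_i$ depending only on the exponent data $(a_i,b_i)$, \emph{uniformly in the coefficients $c_i$ and in $q$}. The uniformity in $c_i$ (which absorb all the $\psi$-dependence) is what makes the evaluation points definable independently of $F,\psi,x$; the proof uses a Vandermonde-type invertibility argument, not just a comparison of growth rates.
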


The remark about the definable $d\in\VG$ just below Theorem \ref{thm:presburger-fam} also applies to the $d$ of Theorem \ref{thm:fam:gen}.
We do not believe that Theorem \ref{thm:fam:gen} would hold without squaring, i.e.,
with $\sup_{w\in W} |H(w,x) |_{\C}$ instead of $\sup_{w\in W} |H(w,x) |^2_{\C}$.
Indeed, using a one-point set as $W$, this would imply that the absolute value of a
$\cCexp$-function can be approximated by a $\cCexp$-function, which already seems
unlikely for the function $H \in \cCexp(\VG^2)$, $H(x,y) = x^2 - y$.

Theorem \ref{thm:fam:gen} implies Theorem~\ref{thm:fam} as follows.

\begin{proof}[Proof of Theorem \ref{thm:fam}]
(1) is the Bdd-part of \cite[Theorem~4.4.2]{CHallp}.

(2) Apply Theorem \ref{thm:fam:gen} to $|H|^2_{\C}$ to obtain a (real-valued) function $G$ in $\cCexp(X)$ such that
$\sup_{w\in W} |H(w,x) |^2_{\C}\leq G(x)$ whenever $H(\cdot,x)$ is bounded.
It remains to show that any $G$ can be bounded by $q^{\alpha(x)}$
for some definable $\alpha$ (as functions depending on $F \in \Loc_{\gg1}, \psi \in \cD_F, x \in X_F$).
(We will more generally bound $|G(x)|_{\C}$ for arbitrary $G \in \cCexp(X)$, i.e., not necessarily real valued.)

Using the definition of $\cCexp$, one easily reduces to the case $G \in \cC(X)$. Indeed, by that definition,
\[
G(x)=\sum_{i=1}^N G_{i}(x)\Big( \sum_{y \in Y_{i,x}}\psi\big( g_{i}(x,y)  { + e_{i}(x,y)/n_i }     \big)\Big)
\]
for some $G_i\in \cC(X)$, some definable sets $Y_i \subset X \times \RFss$ and some definable functions $g_i:X\to \VF$ and $e_i:X'\to \RF_{n_i}$.
The complex norm of the inner sum is bounded by a product of some $\#\RF_{m_{ij}} = q^{1+\ord(m_{ij})}$ for each $i$, so it remains to bound the $|G_i(x)|_{\C}$.

So now suppose $G \in \cC(X)$. By definition of $\cC$, we have
\[
G(x)=\sum_{i=1}^N  \# Y_{i,x} \cdot  q^{\alpha_{i}(x)} \cdot \big( \prod_{j=1}^{N'} \beta_{ij}(x) \big) \cdot \big( \prod_{\ell=1}^{N''} \frac{1}{1-q^{a_{i\ell}}} \big)
\]
for some definable $Y_i \subset X \times \RFss$, $\alpha_i, \beta_{ij}\colon X \to \Z$ and some non-zero integers $a_{i\ell}$.
Clearly, each of the factors of the terms of the sum can be bounded as desired (using again $\#\RF_{m} = q^{1+\ord(m)}$), and hence so can the sum.
\end{proof}

\subsection{Proof of Theorem \ref{thm:fam:gen}}\label{theproofs}

The proof of Theorem \ref{thm:fam:gen}
is based on two deep
results, which we develop first:
one which allows us to reduce to functions living on $\RFss$ and $\VG$, and
another one which allows us to find approximate suprema of functions on $\VG$.

A key result of \cite{CGH} is its Proposition~4.5.8; the following is the generalization
of that result to the present context given in \cite{CHallp}. (Here, for simplicity, we
state a slightly weakened version.)

\begin{prop}[{\cite[Proposition~4.6.1]{CHallp}}]
\label{IML}
Let $m\geq 0$ be an integer, let $X$ and $U \subset X\times \VF^m$ be definable,
and let $H$ be in $\cCexp(U)$. Write $x$ for variables running over $X$ and $y$ for variables running over $\VF^m$.
Then there exist integers $N\geq 1$, $d\geq 0$, a definable surjection $\varphi:U\to V\subset X\times \RFss\times \VG^*$ over $X$, definable functions $h_{i}:U\to \VF$, and functions $G_{i}$ in $\cCexp(V)$ for $i=1,\ldots,N$,
such that the following conditions hold for each $F\in \Loc_{\gg1}$ and each $\psi\in \cD_F$;
we omit the indices $F,\psi$ to keep the notation lighter.
\begin{enumerate}
 \item One has
 $$
 H(x,y) = \sum_{i=1}^{N}G_{i}(\varphi(x,y))\psi(h_{i}(x,y));
 $$
 \item if one sets, for $(x,r)\in V$,
$$
U_{x,r} := \{y\in U_{x}\mid \varphi(x,y)=(x,r)\}
$$
 and
 $$
 W_{x,r}:=\{y \in U_{x,r}\mid
 \max_{i} | G_{i}(x,r)|_{\CC} \leq |H(x,y)|_{\CC} \},
 $$
 then
\[
\Vol(U_{x,r}) \leq q^d \cdot \Vol(W_{x,r}) < +\infty,
\]
where the volume $\Vol$ is taken with respect to the Haar measure on $\VF^m$.
\end{enumerate}
Here, as usual, $U_{x}$ is the set of $y\in \VF^m$ such that $(x,y)$ lies in $U$, and, for the map $\varphi$ to ``be over $X$'' means that $\varphi$ makes a commutative diagram with the natural maps to $X$.
\end{prop}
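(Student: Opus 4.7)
My plan is to proceed by induction on $m$, the number of $\VF$-variables. The base case $m = 0$ is immediate (take $\varphi$ the identity, $N = 1$, $G_1 = H$, $h_1 = 0$, $d = 0$), and for the induction step it suffices to handle the single-variable case $m = 1$: once that is available, applying it to $H$ viewed as a $\cCexp$-function in one $\VF$-variable over the base $X \times \VF^{m-1}$ produces $\cCexp$-coefficients to which one reapplies the induction hypothesis in the remaining $m-1$ $\VF$-variables, and the resulting maps $\varphi$ and bounds $d$ compose cleanly.

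So assume $m = 1$ and write $y$ for the single $\VF$-variable. I would invoke the cell decomposition and preparation theorems of \cite{CHallp} for the $\cCexp$-presentation of $H$, partitioning $U$ into finitely many cells over $X$ on each of which $y$ is parametrized by a definable center $c(x)$ together with cell coordinates $\lambda = \ord(y - c(x)) \in \VG$ and $\xi = \ac_n(y - c(x)) \in \RF_n$ for some uniformly chosen $n$ (the extra residue-ring resolution being needed because of possibly large ramification). After a further reparameterization in the spirit of Proposition~\ref{repar}, every $\VF$-valued definable function $g(x,y)$ appearing inside a character in the presentation of $H$ splits as
\[
g(x,y) \;=\; g^{\flat}(x, \lambda, \xi) \;+\; g^{\sharp}(x,y),
\]
where the ``slow'' part $g^{\flat}$ depends only on the cell coordinates and the ``fast'' part $g^{\sharp}$ either satisfies $\psi(g^{\sharp}) \equiv 1$ on the relevant fiber or contributes a genuinely nontrivial additive character of the ambient ball. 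Collecting terms by their fast parts, and merging pairs whose fast parts differ by a $\varphi$-pullback phase (absorbing the relative constant into a new $\cCexp$-coefficient), yields a presentation
\[
H(x,y) \;=\; \sum_{i=1}^{N} G_i(x, \lambda, \xi)\,\psi(h_i(x,y)),
\]
with $G_i \in \cCexp(V)$, and such that for every $i \ne j$ the function $\psi(h_i - h_j)$ is a nontrivial additive character of every fiber. Setting $\varphi(x,y) := (x, \xi, \lambda)$ then delivers part (1).

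For part (2), I would cover each fiber $U_{x,r}$ (which by construction is, up to measure zero, a single $\VF$-ball) by finitely many phase-constant sub-balls $B_k$ on which every $\psi(h_i(x,\cdot))$ is constant, with some value $\zeta_i^{(k)}$. The number $|K|$ of sub-balls is bounded by $q_F^{D}$ for some integer $D$ depending only on the combinatorial data of the preparation (the exponent $n$, the ``radius gap'' from Proposition~\ref{repar}(2), the number $N$), not on $F$, $x$ or $r$. On each $B_k$ the value of $H(x,\cdot)$ is the constant $\sum_i G_i(x,r)\,\zeta_i^{(k)}$, and the non-resonance of the $h_i$ together with orthogonality of nontrivial additive characters of a ball gives
\[
\sum_{k\in K} \Vol(B_k)\,\Bigl|\sum_i G_i(x,r)\,\zeta_i^{(k)}\Bigr|^2 \;=\; \Vol(U_{x,r})\sum_i |G_i(x,r)|_\CC^2 \;\ge\; \Vol(U_{x,r})\,\max_i |G_i(x,r)|_\CC^2.
\]
By pigeonhole some sub-ball $B_{k_0}$ satisfies $|H(x,y)|_\CC^2 \ge \max_i |G_i(x,r)|_\CC^2$ throughout $B_{k_0}$, so $B_{k_0} \subseteq W_{x,r}$ and $\Vol(W_{x,r}) \ge q_F^{-D}\,\Vol(U_{x,r})$. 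The finiteness $\Vol(U_{x,r}) < +\infty$ is immediate from the cell decomposition, and taking $d := D$ completes (2).

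The main obstacle is the normal-form step in the second paragraph: ensuring that after preparation the $h_i$'s really are pairwise non-resonant on every fiber of $\varphi$, uniformly across $F \in \Loc_{\gg 1}$. This is exactly where the heavy cell-decomposition and preparation apparatus of \cite{CHallp} is used; allowing arbitrary ramification forces the finer residue-ring sorts $\RF_n$ throughout, and the grouping/merging of terms with resonant phases is essential, since without it a cancellation between two phase-aligned terms could leave $W_{x,r}$ empty. A secondary point is the uniformity of $D$ in $F$, $x$, $r$, which follows because $D$ is controlled only by finitely many combinatorial parameters of the preparation, all independent of the field once the residue characteristic is sufficiently large.
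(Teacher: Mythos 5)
The paper does not give a proof of this proposition: it is quoted verbatim (and slightly weakened) from \cite[Proposition~4.6.1]{CHallp}, generalizing \cite[Proposition~4.5.8]{CGH}, so there is no in-paper argument to compare against. Assessing your sketch on its own terms, the high-level structure (cell decomposition to reach a reparameterized normal form, an orthogonality / Plancherel identity over each fiber, and a pigeonhole argument to extract a sub-ball inside $W_{x,r}$) is the right sort of strategy, and you correctly flag where the heavy lifting lives.

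That said, as written there are real gaps at precisely the points you flag. First, the uniform bound $|K|\le q_F^{D}$ on the number of equal-volume phase-constant sub-balls is not a consequence of the preparation you describe unless you also know that the ``Lipschitz exponent'' of each $h_i$ on $U_{x,r}$ is bounded \emph{relative to the valuative radius of $U_{x,r}$}, uniformly in $F,x,r$. That is a genuine statement that needs proving (it does not come for free from knowing the $h_i$ are definable and prepared), and without it the pigeonhole step gives a nonempty $W_{x,r}$ but no lower bound on its volume of the required form. Second, the merging step --- grouping $h_i,h_j$ whose difference is a ``$\varphi$-pullback phase'' and absorbing the resulting constant into the $\cCexp$-coefficient --- is asserted rather than shown: one must check that $\psi(h_i-h_j)$, when it factors through $\varphi$, is actually of $\cCexp$-class on $V$ (this uses that $h_i - h_j$ composes with a definable section of $\varphi$ in the right way), and that the process terminates with genuinely non-resonant phases on \emph{every} fiber, uniformly across $\Loc_{\gg1}$. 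Third, the induction on $m$ does not ``compose cleanly'' without further argument: after the $m=1$ step the $G_i$ live on $V_1$, and to apply the $(m-1)$-case you must (a) bundle all $G_i$ into a single $\cCexp$-input (e.g.\ via an auxiliary residue-sort index), and (b) verify that the two volume inequalities multiply, i.e.\ that if $\max_i|G_i(x,y')|\ge \max_j|G_j'(x,r')|$ on a large-measure set of $y'$ and, for each such $y'$, $|H(x,y',\cdot)|\ge\max_i|G_i(x,y')|$ on a large-measure set of $y_m$, then by Fubini $|H|\ge\max_j|G_j'|$ on a large-measure subset of the $m$-dimensional fiber. This does work, but it is a step that must be written out, not waved at.

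In short: the sketch identifies the correct ingredients and the correct location of the difficulty, but what you have supplied is essentially a roadmap to a proof, with the technically substantive steps (the preparation/normal-form lemma giving uniform non-resonance, the uniform sub-ball count, and the Fubini-composition of volume bounds) left as assertions. These are exactly the steps that constitute the content of \cite[Proposition~4.6.1]{CHallp}.
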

Note that (2) in particular implies that if $U_{x,r}$ has positive measure, then
\begin{equation}\label{eq.max<sup}
\max_{i} | G_{i}(x,r)|_{\CC} \leq \sup_{y\in U_{x,r}}|H(x,y)|_{\CC}.
\end{equation}

% From the definition of $\cC$-class functions and stable embeddedness of the residue rings (\cite[Corollary 5.1.5]{CHallp}), one easily sees
% that any $\cC$-class function factors over a $\cC$-function that only has $(\RFss \times \VG^*)$-variables.
% This is not true for $\cCexp$-functions, but using Proposition \ref{IML}, one can at least simplify the domain of a $\cCexp$-function
% preserving most of its behavior on growth and size, as follows.

By virtue of Proposition \ref{IML}, one can simplify the domain of a $\cCexp$-function, while preserving most of its behavior on growth and size, as follows.

\begin{cor}\label{from.exp.to.e.IML}
Let  $W$, $X$, and $U\subset W\times X$ be definable sets and let $H$ be in $\cCexp(U)$.
Then there exists a nonnegative integer $N$, a definable surjection
 $$
\varphi:U\to V\subset X\times \RFss \times \VG^*
 $$
over $X$ and a non-negative real valued function $\tilde H$ in $\cCexp(V)$ such that
for all $F\in \Loc_{ \gg1}$, $\psi \in \cD_F$, $x\in X_{F}$ and $v \in V_{F,x}$, one has
\begin{equation}\label{eq.tildeH}
\frac{1}{N} \tilde H_{F,\psi}(v) \leq \sup_{\substack{ w \in W_F\\  \varphi_F(w,x)=v } } |H_{F,\psi}(w,x)|^2_{\C} \leq N  \tilde H_{F,\psi}(v),
\end{equation}
where $V_x$ is the fiber of $V$ above $x$, and where by the right hand inequality, we in particular mean that the supremum is finite.
\end{cor}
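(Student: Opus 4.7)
The plan is to apply Proposition~\ref{IML} after absorbing the non-$\VF$ components of $W$ into the parameter set. I first write $W$ as a definable subset of $\VF^m \times R$ with $R \subset \RFss \times \VG^*$, so that via the identification $W \times X \hookrightarrow X' \times \VF^m$ with $X' := X \times R$, I can regard $U$ as a definable subset of $X' \times \VF^m$ and $H$ as a $\cCexp$-function on it. Proposition~\ref{IML} then delivers integers $N, d \ge 0$, a definable surjection $\varphi \colon U \to V' \subset X' \times \RFss \times \VG^*$ over $X'$ (and hence over $X$), definable $h_i \colon U \to \VF$, and $G_i \in \cCexp(V')$ for $i = 1, \dots, N$ satisfying items~(1) and (2). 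I then regard $V'$ as a definable subset $V \subset X \times \RFss \times \VG^*$ by grouping the $R$ factor with $\RFss \times \VG^*$.

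Next I extract a two-sided bound on each fiber of $\varphi$. From item~(1), $|H(w,x)|_\C \le \sum_i |G_i(\varphi(w,x))|_\C \le N \max_i |G_i(\varphi(w,x))|_\C$; since the $G_i$-factor depends only on $v := \varphi(w,x)$, taking the supremum over $w$ with $\varphi(w,x) = v$ gives $\sup |H(w,x)|_\C \le N \max_i |G_i(v)|_\C$. From item~(2), for every $v$ in the image of $\varphi$ the set $W_{x,r}$ is non-empty, providing a witness $w$ with $\max_i |G_i(v)|_\C \le |H(w,x)|_\C$, and so $\max_i |G_i(v)|_\C \le \sup |H(w,x)|_\C$. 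Squaring both bounds yields
\[
\max_i |G_i(v)|_\C^2 \;\le\; \sup_{\varphi(w,x)=v} |H(w,x)|_\C^2 \;\le\; N^2 \max_i |G_i(v)|_\C^2.
\]

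Finally I set $\tilde H(v) := \sum_i |G_i(v)|_\C^2 = \sum_i G_i(v)\overline{G_i(v)}$, which is non-negative real valued, and verify $\tilde H \in \cCexp(V)$ by noting that complex conjugation preserves the $\cCexp$-class: in Definition~\ref{expfun} each outer coefficient $H_i$ lies in $\cC$ and is therefore real, while $\overline{\psi(g_i + e_i/n_i)} = \psi(-g_i + (-e_i)/n_i)$, so $\overline{G_i}$ has the required form with $g_i$ and $e_i$ negated. The elementary inequalities $\max_i a_i \le \sum_i a_i \le N \max_i a_i$ for non-negative reals then give $\tfrac{1}{N^2}\tilde H(v) \le \max_i |G_i(v)|_\C^2 \le \sup |H|_\C^2 \le N^2 \max_i |G_i(v)|_\C^2 \le N^2 \tilde H(v)$, so the statement holds with the integer taken to be $N^2$. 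The essential input is the two-sided comparison between $\sup |H|_\C$ and $\max_i |G_i|_\C$ on each fiber, which is precisely what Proposition~\ref{IML}(2) delivers; everything else—splitting off the non-$\VF$ sorts of $W$ and checking that conjugation preserves $\cCexp$—is bookkeeping.
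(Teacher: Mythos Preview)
Your overall strategy matches the paper's, and the upper bound $\sup|H|^2 \le N^2\tilde H$ as well as the verification that $\tilde H = \sum_i |G_i|^2$ lies in $\cCexp(V)$ are fine. However, there is a genuine gap in your lower bound.

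You assert that ``from item~(2), for every $v$ in the image of $\varphi$ the set $W_{x,r}$ is non-empty''. This is not what item~(2) of Proposition~\ref{IML} says. The inequality there is
\[
\Vol(U_{x,r}) \;\le\; q^d \cdot \Vol(W_{x,r}) \;<\; +\infty,
\]
which forces $W_{x,r}\neq\emptyset$ only when $U_{x,r}$ has \emph{positive measure}. For a fiber $U_{x,r}\subset\VF^m$ that is non-empty but of measure zero (which certainly occurs for $m\ge 1$), the inequality is vacuous and gives no witness $w$ with $\max_i|G_i(v)|_\CC\le|H(w,x)|_\CC$. Indeed, the paper makes this explicit in the remark following Proposition~\ref{IML}: equation~(\ref{eq.max<sup}) is stated only under the hypothesis that $U_{x,r}$ has positive measure.

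The paper closes this gap by an induction on $m$: one splits $U$ according to whether the $\varphi$-fiber has positive measure or not, handles the positive-measure part exactly as you do, and treats the measure-zero part by induction (such fibers have dimension $<m$, so after a definable reparameterization they live over a $\VF^{m'}$ with $m'<m$; the base case $m=0$ is trivial since then $W\subset\RFss\times\VG^*$ already and one may take $\varphi$ essentially the identity and $\tilde H=|H|^2$). Without this induction, your argument does not establish the left inequality in~(\ref{eq.tildeH}).
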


\begin{proof}[Proof of Corollary~\ref{from.exp.to.e.IML}]
Write $W\subset \VF^m\times W_0$ for some $m$ and some $W_0 \subset \RFss\times \VG^*$. If $m=0$ there is nothing to prove. We proceed by induction on $m$.
Apply Proposition~\ref{IML} to $H$ (where the $X$ from the proposition is $X \times  W_0$), yielding integers $N,d$, a definable surjection $\varphi: U\to V\subset X\times \RFss \times \VG^*$ and $G_i$ in $\cCexp(V)$ for $i=1,\ldots,N$.
We may assume that each fiber of $\varphi$ has positive measure, by treating
the fibers of measure zero separately, using induction on $m$;
in particular, (\ref{eq.max<sup}) holds, i.e., $\max_i |G_i(v)|_{\C} \le \sup_{w, \varphi(w,x)=v} |H(w, x)|_{\C}$.
Now set
$$
\tilde H(v) := \sum_{i=1}^{N} |G_{i}(v)|_\CC^2 \quad \mbox{for $v$ in } V.
$$
Then $\tilde H$ is as desired (with this $N$): The left hand inequality of (\ref{eq.tildeH}) of follows from (\ref{eq.max<sup}),
and the right hand inequality follows from $H(w, x) = \sum_i G_i(\varphi(w, x))\psi(h_i(w,x))$ and $|\psi(h_i(w,x))| = 1$.
\end{proof}

We now come to the second ingredient to the proof of Theorem \ref{thm:fam:gen}.
Suppose that $H$ is a bounded $\cCexp$-function with domain in the value group, say $H \in \cCexp(W)$ for some definable $W \subset \VG$.
To obtain an approximate maximum of $H$,
we will choose a finite subset $W_0 \subset W$ such that $H$ takes its maximum on $W_0$, up to some factor $m$. We will need to be able to
do this in families, in such a way that $m$ and $\#W_0$ do not depend on the parameters, and the elements of $W_0$ depend definably on the parameters.
After various simplifications, what we end up needing are the following two lemmas:
Lemma~\ref{lem:Z-ubd} which is used in the case where $W$ is infinite, and Lemma~\ref{lem:Z-bd} which is used in the case where $W$ is finite but growing in size (when varying the family members).

\begin{lem}\label{lem:Z-ubd}
Suppose that for $i = 1, \dots, k$, we have $a_i \in \NN$ and $b_i \in \ZZ$. Then there exist positive integers $m$ and $\ell$
such that the following holds for every tuple $c = (c_1, \dots, c_k) \in \CC^k$ and every $q \ge 2$.
Suppose that the function
\[
h(w) := \sum_i c_i w^{a_i} q^{b_i w}
\]
is bounded on $\NN$; then
we have \[\sup_{w \in \NN} |h(w)| \le m\cdot \max_{0 \le w \le \ell} |h(w)|.\]
\end{lem}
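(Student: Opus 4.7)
My plan is to control the coefficients $c_i$ by a (generalized) Vandermonde bound using the values $h(0),\dots,h(k-1)$, and then exploit the exponential decay of the admissible terms to bound $|h(w)|$ for all $w\in\NN$.

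First I would reduce to the admissible case. Since each monomial $w^{a_i}q^{b_i w}$ with $b_i>0$, or with $b_i=0$ and $a_i>0$, is unbounded on $\NN$ (uniformly in $q\ge 2$), the hypothesis that $h$ is bounded forces $c_i=0$ for every such $i$. Discarding these terms and combining any repeated $(a_i,b_i)$, I may assume that the pairs $(a_i,b_i)$ are pairwise distinct and each satisfies $b_i<0$ or $(a_i,b_i)=(0,0)$. Let $c_0$ be the coefficient of the $(0,0)$-term (or $0$ if that term is absent), and set $A:=\max_i a_i$.

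Next I would prove a Vandermonde-type lower bound for the $k\times k$ matrix $V_q:=[j^{a_i}q^{b_i j}]_{i,\,j=0,\dots,k-1}$: there should exist constants $C_0>0$ and $D_0\ge 0$, depending only on the $(a_i,b_i)$'s, such that $|\det V_q|\ge C_0\, q^{-D_0}$ for all $q\ge 2$. For the pure exponential case ($A=0$) this is immediate from $\det V_q=\prod_{i<j}(q^{b_j}-q^{b_i})$ and $|q^{b_j}-q^{b_i}|\ge q^{\min(b_i,b_j)}$ for $q\ge 2$ and distinct integer exponents. In the mixed (confluent) case I would expand $\det V_q$ via Leibniz and identify the unique monomial of largest power of $q$, using distinctness of the pairs. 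Combined with the trivial bound $(k-1)^A$ on the cofactor entries, this yields $\|V_q^{-1}\|_\infty\le C_1\, q^{D_0}$, and hence
\[
|c_i|\le C_1\, k\, q^{D_0}\, M_0,\qquad M_0:=\max_{0\le w\le k-1}|h(w)|.
\]

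Finally I would choose $\ell\ge k-1$ large enough, depending only on the $(a_i,b_i)$'s, so that $C_1 k^2\, w^A q^{D_0-w}\le 1$ for all $q\ge 2$ and all $w\ge\ell$; this is possible since each $|b_i|\ge 1$ in the sum below and $w^A q^{D_0-w}\to 0$ uniformly in $q\ge 2$. Setting $M:=\max_{0\le w\le\ell}|h(w)|\ge M_0$, I then have, for all $w\ge\ell$,
\[
|h(w)-c_0|\ \le\ \sum_{i:\,b_i<0}|c_i|\,w^{a_i}q^{b_i w}\ \le\ M.
\]
Applying this at $w=\ell$ gives $|c_0|\le|h(\ell)|+M\le 2M$, and therefore $|h(w)|\le|c_0|+M\le 3M$ for $w\ge\ell$, while $|h(w)|\le M$ trivially for $w\le\ell$. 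This yields $\sup_{w\in\NN}|h(w)|\le 3M$, so $m=3$ works with the chosen $\ell$. The hard part will be the uniform lower bound $|\det V_q|\ge C_0\, q^{-D_0}$ in the mixed case, since $\det V_q$ vanishes as $q\to\infty$ and the precise rate of this vanishing must be extracted from the combinatorics of the $(a_i,b_i)$'s.
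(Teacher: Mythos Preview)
Your overall strategy is exactly the paper's: bound the coefficients $c_i$ by inverting the evaluation matrix at the first few integers, then use the exponential decay of the surviving terms. Your bootstrap for the constant term $c_0$ (via $|c_0|\le |h(\ell)|+M$) is in fact slightly slicker than the paper's case split between $h_{-1}$ and the constant part.

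Two points deserve attention. First, a minor one: in your reduction step, the implication ``each bad monomial is unbounded $\Rightarrow$ its coefficient vanishes'' is false as stated, since repeated pairs can cancel. You should combine repeated $(a_i,b_i)$ \emph{first}, and then use asymptotic dominance (largest $b$, then largest $a$) to kill the bad coefficients one by one.

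Second, and this is the real content: your proposed route to the lower bound $|\det V_q|\ge C_0 q^{-D_0}$, namely ``identify the unique monomial of largest power of $q$'', does not work as stated for an arbitrary subset of pairs. Several permutations $\sigma$ can contribute to the extremal $q$-degree, and without further structure one cannot rule out cancellation; moreover, the $j=0$ column forces $\sigma(i)^{a_i}=0$ for any $i$ with $a_i>0$, which complicates the extremal analysis. The paper avoids this by padding the set of pairs to the \emph{full rectangle} $I=\{0,\dots,A\}\times\{-1,\dots,-B\}$ (extending $c$ by zeros, taking $\ell=(A{+}1)B$ evaluation points), and then looking at the \emph{most negative} power of $q$ in $\det Z$. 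For the full grid, the permutations achieving that minimal degree are exactly those mapping each ``$b$-slice'' $\{0,\dots,A\}\times\{b\}$ to a fixed block of $A{+}1$ consecutive integers, so the extremal coefficient factors as a product of $B$ ordinary Vandermonde determinants and is therefore nonzero. This is the trick that makes the determinant estimate go through; once you embed into the full grid, your proof is complete.
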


\begin{lem}\label{lem:Z-bd}
Suppose that for $i = 1, \dots, k$, we have $a_i \in \NN$ and $b_i \in \ZZ$. Then there exist positive integers $m$ and $\ell$, and
Presburger definable functions $w_1, \dots, w_\ell\colon \NN \to \NN$ with $w_i(t) \le t$ for all $i$ and $t$
and such that the following holds for every tuple $c = (c_1, \dots, c_k) \in \CC^k$, every $t \in \NN$ and every $q \ge 2$.
Consider the function
\[
h(w) := \sum_i c_i w^{a_i} q^{b_i w} \quad\mbox{for } w\in\NN;
\]
then we have \[\max_{0 \le w \le t} |h(w)| \le m\cdot \max_{1 \le i \le \ell} |h(w_i(t))|.\]
\end{lem}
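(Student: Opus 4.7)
The plan is to prove Lemma~\ref{lem:Z-bd} by induction on $k$ after combining like terms so that the pairs $(a_i, b_i)$ are pairwise distinct. A preliminary case analysis handles small $t$: when $t$ is less than a constant $C_0 = C_0(a_i, b_i, k)$ to be chosen, I simply take the sample family to be $\{0, 1, \dots, t\}$ (padded by zeros to have a fixed cardinality $\ell$), with $m = 1$. For the base case $k = 1$, the function $|h(w)| = |c|\cdot w^a q^{bw}$ is log-concave in the real variable $w$; if $b \ge 0$ it is non-decreasing on $\NN$, so its integer maximum on $\{0, \dots, t\}$ is $|h(t)|$, while if $b < 0$ and $a \ge 1$ the real maximum sits at $w^\star = a/(|b|\ln q)$, which lies in $[0, 2a/|b|]$ for $q \ge 2$. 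Hence the set $\{0, 1, \dots, \lceil 2a/|b|\rceil\} \cup \{t\}$ captures the integer maximum with $m = 1$.

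For the inductive step, I group summands by their $b_i$-value and write $h(w) = \sum_{s=1}^{r} q^{\beta_s w} P_s(w)$ with $\beta_1 < \dots < \beta_r$ distinct integers and $P_s$ polynomials of degrees $d_s$. I would take the sample family $\{w_j(t)\}$ to consist of three groups: (i) the small integers $\{0, 1, \dots, C_0\}$; (ii) the near-$t$ integers $\{t, t-1, \dots, t-r-D\}$ with $D = \max_s d_s$; and (iii) equispaced nodes $\{\lfloor jt/D'\rfloor : 0 \le j \le D'\}$ for some large $D'$. Evaluations of $h$ on family (ii) combined with a confluent-Vandermonde-type inversion in the variables $q^{-\beta_s}$ extract estimates of the form $q^{\beta_s t}|P_s(t)| \le C\max_j |h(w_j(t))|$ uniformly in $q \ge 2$; Lagrange interpolation on family (iii) then bounds $\sup_{w \in [0,t]}|P_s(w)|$ by the Lebesgue constant of equispaced nodes (a function of $D'$ alone) times $\max_j|P_s(w_j(t))|$; family (i) covers the integer maxima of the individual terms with $\beta_s < 0$, which by log-concavity are confined to the bounded region $[0, 2a_i/|b_i|]$.

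The principal obstacle is ensuring the Vandermonde-type inversion is uniform in $q \ge 2$. I would factor the evaluation matrix $M_{ji} = (t-j)^{a_i} q^{b_i(t-j)}$ as $M = D_1 V D_2$ with $D_1$, $D_2$ diagonal matrices absorbing the extreme $q$-scales, leaving $V$ a confluent Vandermonde-like matrix in the finite set $\{q^{-\beta_s}\}_s$. The integrality of the $b_i$'s produces gaps of at least $1$ between distinct $\beta_s$, which gives a uniform lower bound on $|\det V|$ for $q \ge 2$, and hence a uniform bound on $\|V^{-1}\|$. A secondary obstacle is that cancellation can make $|h(w)|$ much smaller than $\sum_i |c_i| w^{a_i} q^{b_i w}$, so a naive termwise upper envelope bound is not enough; the Vandermonde/Lagrange framework bypasses this by recovering bounds on the relevant combinations $q^{\beta_s w} P_s(w)$ directly from the values of $h$ at the sample points, rather than from individual $|c_i|$'s.
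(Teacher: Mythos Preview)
Your outline has the right ingredients (Vandermonde-type inversion, equispaced sampling), but the way you assemble them does not close. The central gap is the sentence ``Lagrange interpolation on family (iii) then bounds $\sup_{w \in [0,t]}|P_s(w)|$ by the Lebesgue constant of equispaced nodes times $\max_j|P_s(w_j(t))|$.'' You never have access to $P_s(w_j(t))$: at an equispaced node $w_j(t)$ you only know $h(w_j(t)) = \sum_s q^{\beta_s w_j(t)} P_s(w_j(t))$, a single number mixing all $s$. Your confluent-Vandermonde separation in family (ii) works precisely because those sample points are \emph{clustered} (so that the $q^{-\beta_s j}$ factors distinguish the summands); isolated equispaced nodes give one equation per node in $r$ unknowns and cannot recover the individual $P_s$-values. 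Moreover, even if you could extract $q^{\beta_s t}|P_s(t)|$ from family (ii), for $\beta_s>0$ this does not control $q^{\beta_s w}\sup_{[0,t]}|P_s|$ (take $P_s(w)=w-t$), and for $\beta_s<0$ the quantity $q^{\beta_s t}|P_s(t)|$ is exponentially small and tells you nothing about $P_s$ on the bulk of $[0,t]$. Family (i) locates the maxima of the individual \emph{monomials} $c_i w^{a_i}q^{b_i w}$ with $b_i<0$, but that does not by itself bound $\max_j|h(w_j)|$ from below: at every $w_j$ in family (i) the positive-$\beta$ and zero-$\beta$ pieces might cancel the negative-$\beta$ piece.

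What is missing is a \emph{combination step}: one must argue that at some sample point $w_\nu$ the value $|h(w_\nu)|$ is comparable to the dominant piece $|h_\nu(w_\nu)|$, not merely that each piece separately is controlled at its own favorite sample points. The paper achieves this by splitting $h=h_{-1}+h_0+h_1$ by sign of $b_i$ and proving, for the negative and positive pieces, \emph{decay} estimates of the form $|h_{-1}(w)|\le q^{(\ell-w)/2}|h_{-1}(w_{-1})|$ (Claim~$-1$) rather than mere maximum estimates. These decay bounds force the nondominant pieces to be genuinely small at the sample point $w_\nu$ of the dominant piece, so that $|h(w_\nu)|\ge \tfrac13|h_\nu(w_\nu)|$. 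Your plan has no analogue of this: without such control on how each piece behaves away from its own sampling region, cancellation at the sample points cannot be ruled out. The induction on $k$ you mention is also never invoked; it could perhaps be made to work, but the argument you actually sketch is a direct one that relies on the flawed Lagrange step.
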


We will prove both lemmas together.

\begin{proof}[Proof of Lemmas \ref{lem:Z-ubd} and \ref{lem:Z-bd}]
We start working on Lemma~\ref{lem:Z-bd}. First note that we may impose lower bounds on $t$, by taking some more Presburger functions covering the whole interval when $t$ is smaller; we will do this whenever convenient.

We will treat three special cases, namely when all $b_i$ are negative, when all $b_i$ are positive, and when all $b_i$ are zero.
This will then be put together to obtain the result for arbitrary $b_i$. For the latter to work, we will prove slightly stronger statements in the special cases, namely the following three claims.

\medskip

Claim $-1$: If all $b_i$ are negative, then there exists $\ell \in \NN$ such that for every $c = (c_i)_i$ and every $q \ge 2$, there exists $w_{-1} \in \{0, \dots, \ell-1\}$ such that
\begin{equation}
\tag{$*_{-1}$}
|h(w)| \le q^{(\ell-w)/2} \cdot |h(w_{-1})| \qquad \text{for every } w \ge 0
\end{equation}

\medskip

Claim $0$: If all $b_i$ are zero, then there exist $\ell,m \in \NN$ such that for every $c = (c_i)_i$, every $q\ge 2$ and every sufficiently big $t$, there exists $w_0 \in \{s, 2s, \dots, (\ell - 1)s\}$, where $s = \lfloor \frac{t}{\ell} \rfloor$ such that
\begin{equation}
\tag{$*_{0}$}
|h(w)| \le m \cdot |h(w_0)| \qquad \text{for every } 0 \le w \le t
.
\end{equation}

\medskip

The third claim follows from Claim $-1$ by replacing $h(w)$ by $h(t-w)$:

Claim $1$: If all $b_i$ are positive, then there exists $\ell \in \NN$ such that for every $c = (c_i)_i$, every $t$ and every $q \ge 2$, there exists $w_{1} \in \{t-\ell+1, \dots, t\}$ such that
\begin{equation}
\tag{$*_{1}$}
|h(w)| \le q^{(\ell-t+w)/2} \cdot |h(w_{1})| \qquad \text{for every } w \le t
.
\end{equation}

\medskip

Before we prove those claims, here is how they imply Lemma~\ref{lem:Z-bd}.

Write $h(w)$ as a sum $h_{-1}(w) + h_{0}(w) + h_{1}(w)$, according to the sign of the $b_i$, and
apply the corresponding claims to $h_{-1}$, $h_0$ and $h_1$.
All the possible values of $w_{-1}, w_0, w_1$ appearing in the three claims are Presburger functions in $t$; these are the functions we use to conclude the lemma, so we need
to prove that for any tuple $c$, there exists a $\nu \in \{-1, 0, 1\}$ such that $\max_w |h(w)| \le m'\cdot |h(w_\nu)|$
for some constant $m'$. The idea for this is that using that the bound on $h_{-1}$ is decreasing and the one on $h_{1}$ is increasing, we can deduce that for one of the $\nu = -1,0,1$, $|h_\nu(w_\nu)|$ dominates the other two summands of $h$ at $w_\nu$; we then can bound
$\max_{w} |h(w)|$ in terms of $h(w_\nu)$. Here are the details:

We may assume that the values $\ell$ obtained from Claims~$-1$ and 1 are the same.
By imposing a lower bound on $t$, we can ensure that
\begin{equation}\label{eq:t-big}
\ell + 4 \le  w_0 \le t - \ell - 4.
\end{equation}
Indeed, we have $w_0 \ge s > t/\ell - 1$, so imposing $t \ge \ell\cdot(\ell + 5)$ implies the left hand inequality;
the right hand inequality is obtained in a similar way using $w_0 \le (\ell - 1)s \le t - t/\ell$.

It is sufficient to bound $\max_{\ell \le w \le t-\ell} |h(w)|$, since all $w$ outside of this interval are equal to one of the definable functions anyway.

Let $\nu \in \{-1, 0, 1\}$ be such that
$B_\nu$ is maximal among
\[
B_{-1} := |h_{-1}(w_{-1})|,\quad
B_0 := 3m\cdot |h_{0}(w_{0})|,\quad
B_1 := h_{1}(w_{1}).
\]
We will prove that for this choice of $\nu$, we have
\begin{equation}\label{eq:h-hnu}
h_{\nu}(w_{\nu}) \le 3 h(w_{\nu}).
\end{equation}
This then implies, for $\ell \le w \le t-\ell$:
\begin{align*}
|h(w)| \le\,\,& |h_{-1}(w)| + |h_{0}(w)| + |h_{1}(w)|
\\
\overset{\hskip-6ex(*_{-1}), (*_{0}), (*_{1})\hskip-6ex}{\le}\,\,&
\hskip4ex |h_{-1}(w_{-1})| + m|h_{0}(w_{0})| +
|h_{1}(w_{1})|
\\
\le\,\,& m' \cdot |h_\nu(w_\nu)|
 \le 3m'\cdot |h(w_\nu)|
\end{align*}
for some suitable multiple $m'$ of $m$ ($m' = 7m$ works independently of $\nu$);
this implies the lemma. Let us now prove
(\ref{eq:h-hnu}).

Suppose first that $\nu = -1$.
Then we have
\begin{equation}\label{eq.ineq}
\begin{aligned}
|h_0(w_{-1})| &\overset{(*_{0})}{\le}  m\cdot |h_{0}(w_{0})|
  \le \textstyle{\frac13} |h_{-1}(w_{-1})| \qquad\text{and}
\\
|h_1(w_{-1})| &\overset{(*_{1})}{\le} q^{(\ell-t+w_{-1})/2}\cdot |h_{-1}(w_{-1})| \le \textstyle{\frac13}
|h_{-1}(w_{-1})|,
\end{aligned}
\end{equation}
where the last inequality follows from $t - \ell \overset{(\ref{eq:t-big})}{\ge} \ell + 8 \ge w_{-1} + 9$.
This yields
\begin{equation}\label{eq.inecomp}
\begin{aligned}
|h(w_{-1})| &= |h_{-1}(w_{-1}) + h_{0}(w_{-1}) + h_{1}(w_{-1})|\\
&\ge |h_{-1}(w_{-1})| - |h_{0}(w_{-1})| - |h_{1}(w_{-1})|\\
&\overset{(\ref{eq.ineq})}{\ge} |h_{-1}(w_{-1})|\cdot (1 - \frac13 - \frac13)
\end{aligned}
\end{equation}
and hence $|h_{-1}(w_{-1})| \le 3|h(w_{-1})|$.

The case $\nu = 1$ works in exactly the same way.

Finally, suppose $\nu = 0$. Then we get
\[
|h_{-1}(w_0)| \overset{(*_{-1})}{\le}
q^{(\ell-w_0)/2} \cdot |h(w_{-1})|
\le \textstyle{\frac13}|h(w_{-1})|,
\]
using $w_0 \ge \ell + 4$. Analogously, we get
$|h_{1}(w_0)| \le \frac13|h(w_{-1})|$,
and hence, by the same computation as in (\ref{eq.inecomp}),
$|h_{-1}(w_{0})| \le 3|h(w_{0})|$.
This finishes the proof that the three claims imply Lemma~\ref{lem:Z-bd}.

\medskip

Before proving the claims, we carry out similar (but simpler) arguments for Lemma~\ref{lem:Z-ubd}.
Again, we write
$h(w)$ as a sum $h_{-1}(w) + h_{0}(w) + h_{1}(w)$, according to the sign of the $b_i$. Since the conclusion of Lemma~\ref{lem:Z-ubd} only speaks about tuples $c$ for which $h$ is bounded,
we may assume that $h_0$ is constant and $h_1$ vanishes entirely.

We apply Claim~$-1$ to $h_{-1}$, we don't need Claim~$1$,
and instead of using Claim~0, we simply use $w_0 := \ell + 4$ (for the $\ell$ from Claim~$-1$). Then the same arguments as for Lemma~\ref{lem:Z-bd} yield
\[
\max_{w} |h(w)| \le m'\cdot \max_{w \le \ell + 1} |h(w)|.
\]
(This time, we do a case distinction on which of
$B_{-1} := |h_{-1}(w_{-1})|$ and $B_0 := 2\cdot |h_{0}(w_{0})|$ is bigger.)

Thus for both lemmas, it remains to prove the three claims. More precisely, it suffices to prove Claims 0 and $-1$, since Claim 1 is equivalent to Claim $-1$.

\medskip

\emph{Proof of Claim 0}.
The function $h(w) =  \sum_i c_i w^{a_i}$ is a polynomial of degree $d := \max_i a_i$.
Let $V$ be the vector space of all polynomials of degree $d$ and consider the map
\[
\phi\colon V \to \CC^{d+1},
f \mapsto \big(f(\frac{1}{2d+4}), f(\frac{2}{2d+4}), \dots, f(\frac{d+1}{2d+4})\big).
\]
Set $S := \{f \in V \mid \|\phi(f)\|_\infty = 1\}$,
i.e., the preimage of the unit sphere in $\CC^{d+1}$ with respect to the maximum norm.
Since $\phi$ is injective, $S$ is compact, so the maximum
\[
m := \max\{|f(\lambda)| \mid f \in S, 0 \le \lambda \le 1\}
\]
exists. From this, we deduce that Claim 0 holds for $\ell = d + 2$.
Indeed, set $f(\lambda) := h(\alpha\lambda)$, where
\[
\alpha := (2d+4)\cdot s = 2(d+2) \lfloor \frac{t}{d+2}\rfloor \ge 2(d+2) (\frac{t}{d+2} - 1).
\]
By imposing $t \ge 2(d+2)$, we obtain $\alpha \ge t$, and hence $\alpha^{-1}w \in [0,1]$
for $0 \le w \le t$. Thus
\[
|h(w)| = |f(\alpha^{-1} w)| \le m\cdot \|\phi(f)\|_\infty
= m\cdot \max_{j=1, \dots, d+1} |h(js)|,
\]
which is what we had to show.

\medskip

\emph{Proof of Claim $-1$}.
Set $A := \max_i a_i$, $B := \max_i -b_i$ and $I = \{0, 1, \dots, A\} \times \{-1, -2, \dots, -B\}$. We can write $h(w)$ as
\[
h(w) = \sum_{(a,b) \in I} c_{a,b}w^a q^{bw}.
\]
Let $c = (c_{a,b})_{(a,b) \in I}$ be the tuple of all coefficients.

Set $\ell := (A+1)\cdot B$ and $x := (h(0), \dots, h(\ell-1)) \in \CC^\ell$.
We will find an $N$ depending only on $A$ and $B$ (but not on $c$)
such that
\begin{equation}\label{eq:c1-xinfty}
\|c\|_1 \le q^N\cdot \|x\|_\infty,
\end{equation}
where $\|\cdot\|_1$ and $\|\cdot\|_\infty$ are the usual norms on $\CC^\ell$. This then implies, for all $w \ge 0$,
\[
\begin{array}{rc@{}c@{}c}
|h(w)| \le& \|c\|_1 &\,\,\cdot\,\,& \max_{(a,b) \in I} w^a q^{bw}\\
\le& q^N \max_{w' < \ell} |h(w')| &\,\,\cdot\,\,& w^A q^{-w}\\
\le&
\multicolumn{3}{l}{q^{(\ell' - w)/2} \cdot \max_{w' < \ell'} |h(w')|,}
\end{array}
\]
where the last inequality is ensured by choosing $\ell'$ big enough, namely such
that $N + A \log_q w \le \ell'/2 + w/2$ holds for all $w \ge 0$ and $q \ge 2$.
%for the last inequality, we impose a lower bound on $q$
%ensuring $w^A \le q^{w/2}$, and we set $\ell' := \max\{\ell, 2N\}$;
Thus (\ref{eq:c1-xinfty}) implies Claim~$-1$.

To obtain (\ref{eq:c1-xinfty}), instead of bounding $\|c\|_1$, we may as well bound $\|c\|_\infty$ (since they
differ at most by a factor $\ell$). We have $x = Zc$, where $Z = (z_{w,(a,b)})_{0\le w<\ell, (a,b) \in I}$
is the matrix with coefficients
\[
z_{w,(a,b)} = w^aq^{bw}.
\]
Suppose for the moment that $Z$ is invertible. Then, by definition of the operator norm $\|Z^{-1}\|_\infty$,
we have $\|c\|_\infty \le \|Z^{-1}\|_\infty \cdot \|x\|_\infty$, so it suffices to find an
upper bound on $\|Z^{-1}\|_\infty$ of the form $q^N$,
where $N$ only depends on $A$ and $B$.
(Note that the entire matrix $Z$ only depends on $A$, $B$ and $q$.)

Up to a constant factor (namely $\ell$),
$\|Z^{-1}\|_\infty$ is bounded by the maximum of the absolute values of the entries of $Z^{-1}$.
These entries are of the form $\det(Z')/\det(Z)$, where $Z'$ is a minor of $Z$. Since
$\det(Z')$ is a polynomial in the entries of $Z$ and those entries are bounded (even independently of $q$,
since all exponents $bw$ are non-positive),
we have an upper bound on $|\det(Z')|$, and it remains to find a lower bound on $|\det(Z)|$ of the form $q^{-N}$;
this at the same time will prove that $Z$ is invertible.

Considering $q$ as an indeterminate, we have $\det(Z) \in \ZZ[q^{-1}] \subseteq \ZZ[q, q^{-1}]$.
It is enough to prove that $\det(Z) \ne 0$ when considered as such a Laurent polynomial.
Indeed, this then implies that for $q$ big enough, we have $|\det(Z)| > \alpha\cdot q^{-N}$
for some fixed $\alpha > 0$, where $-N$ is the least negative power of $q$ appearing in $\det(Z)$.

We have
\begin{equation}\label{eq:detZ}
\det(Z) = \sum_{\sigma} \sgn(\sigma)\prod_{(a,b) \in I} z_{\sigma(a,b),(a,b)}
\end{equation}
where $\sigma$ runs over all bijections $I \to \{0, \dots, \ell - 1\}$.
(We fix an order on $I$ for $\sgn(\sigma)$ and the sign of $\det(Z)$ to be well-defined.)
Each summand of the sum (\ref{eq:detZ}) is a monomial in $q$, namely of degree
\[
d(\sigma) := \sum_{(a,b) \in I}b\sigma(a,b) = \sum_{-B \le b \le -1} b\cdot\sum_{0 \le a \le A} \sigma(a,b).
\]
Let $d_0$ be the smallest (i.e., most negative) degree in $q$ appearing among the summands in (\ref{eq:detZ}).
To prove $\det(Z) \ne 0$,
we will prove that the sum $R$ of the monomials of degree $d_0$ in $q$ is non-zero.

Let us write $\{0, \dots, \ell - 1\}$ as a disjoint union of $B$ many intervals $J_b$ of length $A$:
\[
J_b := \{n \in \ZZ \mid (A+1)(-1-b) \le n \le (A+1)(-1-b) + A\},
\]
for $b = -1, \dots, -B$.
The degree $d(\sigma)$ is minimal if and only if,
for every $(a,b), (a', b') \in I$ with $b < b'$, we have $\sigma(a,b) > \sigma(a', b')$.
This is equivalent to $\sigma(\cdot, b)$ sending $\{0, \dots, A\}$ to $J_b$ for every $b$.

Write $S_b$ for the set of bijections $\{0, \dots, A\} \to J_b$.
Then the sum of the monomials of minimal degree in (\ref{eq:detZ}) is (maybe up to sign)
\begin{align*}
R &= \sum_{\sigma_{-1} \in S_{-1}} \dots \sum_{\sigma_{-B} \in S_{-B}} \sgn(\sigma_{-1})\cdots\sgn(\sigma_{-B})
\prod_{(a,b) \in I} z_{\sigma_b(a),(a,b)}\\
&= \sum_{\sigma_{-1} \in S_{-1}} \dots \sum_{\sigma_{-B} \in S_{-B}}
\prod_b
\bigg(\sgn(\sigma_{b})
\prod_{a} z_{\sigma_b(a),(a,b)}\bigg)\\
&=
\prod_b\sum_{\sigma_{b} \in S_{b}}
\bigg(\sgn(\sigma_{b})
\prod_a z_{\sigma_b(a),(a,b)}\bigg)
\\
&= q^{d_0} \prod_b
\sum_{\sigma_b \in S_b}\bigg( \sgn(\sigma_b)\prod_a \sigma_b(a)^a\bigg).
\end{align*}
Each factor in the product over $b$ is a Vandermonde determinant
(corresponding to a polynomial of degree $A$ evaluated at each element of $J_b$) and hence non-zero.
Thus $R \ne 0$, which is what remained to be proven.
\end{proof}

\begin{proof}[Proof of Theorem \ref{thm:fam:gen}]
By Corollary \ref{from.exp.to.e.IML}, it is enough to consider
a non-negative real-valued $H \in \cCexp(W \times X)$ for $W\subset \RFss\times \VG^*$ and find a
non-negative real-valued $G \in \cCexp(X)$ and a definable $d \in \VG$ such that
\begin{equation}\label{eq:sq:prf}
\sup_{w \in W} H(w, x) \le G(x) \le q^{d}  \sup_{w \in W} H(w, x).
\end{equation}
It will be handy to consider a slightly more general situation, namely where $H$ lives on a definable
subset $U \subset W \times X$ and where the suprema run over $w \in U_x$.

By a recursive procedure, it is enough to only consider the situations where $W\subset \RF_n$ for some $n$ or $W  \subset \VG$.
In the first case, we set
$$
G(x) := \sum_{w\in U_x} H(w,x),
$$
(which lies in $\cCexp(X)$ by Theorem~\ref{thm:mot.int.}).
This $G(x)$ is obviously at least as big as $\sup_{w \in U_x}H(w,x)$, and it exceeds the supremum
at most by a factor $\#\RF_n = q^{\ord(n) + 1}$, so it is as desired.

In the case $W \subset \VG$, the idea is to use a rectilinearization result as in \cite{CPres} to reduce to
Lemmas \ref{lem:Z-ubd} and \ref{lem:Z-bd}, though to deal with our setting allowing highly ramified fields,
we need rectilinearization in the form stated in \cite[Proposition~5.2.6]{CHallp}.
The details are as follows.

By \cite[Proposition~5.2.3]{CHallp}, after possibly introducing new residue ring variables
(which we can later get rid of again as explained above), and after a finite partition of $U$,
we may suppose that ``$H$ has linear ingredients'', i.e., all functions $U \to \VG$ involved in the definition of $H$ depend linearly on $w$.
Moreover, by \cite[Proposition~5.2.6]{CHallp}
we may suppose that either $U_x = \NN$ for all $x$ or $U_x = \{w \in \VG \mid 0 \le w \le \alpha(x)\}$
for some definable function $\alpha:X\to \VG$; this involves introducing more new residue ring variables,
another finite partition of $U$, and applying a bijection which is linear over $X$.
(The linearity of this bijection ensures that $H$ still has linear ingredients.)

% ---
%
% By \cite[Proposition~5.2.6]{CHallp}, after possibly introducing new residue ring variables
% (which we can later get rid of again as explained above), and after a finite partition
% and applying a bijection which is linear over $X$,
% we may suppose that either $U_x = \NN$ for all $x$ or $U_x = \{w \in \VG \mid 0 \le w \le \alpha(x)\}$
% for some definable function $\alpha:X\to \VG$. Moreover, by \cite[Proposition~5.2.3]{CHallp}, we may suppose that the functions $U \to \VG$ involved in the definition of $H$ depend linearly on $w$ (using another finite partition and some more new residue ring variables).

That $H$ has linear ingredients means that there exist $k$ and $a_i,b_i$ ($1 \le i \le k$) such that
for any $F$, $\psi$ and $x$, the map $w\mapsto H_{F,\psi}(w,x)$ is of the form
\[
h(w) := \sum_i c_i w^{a_i} q^{b_i w}
\]
for some $c_i$ depending on $F$, $\psi$ and $x$. Thus we can either apply
Lemma \ref{lem:Z-ubd} (if $U_x = \NN$) or Lemma \ref{lem:Z-bd} with $t = \alpha_F(x)$. In
both cases, we obtain an integer $m \ge 1$ and finitely many definable functions $w_1, \dots, w_\ell\colon X \to \VG$
with $w_i(x) \in U_x$ such that
\begin{equation}\label{lemZresult}
\sup_{w \in U_x}H(w,x) \le m\cdot\max_{1\le i\le \ell} H(w_i(x),x)
\end{equation}
for all $x$. (In the $U_x = \NN$ case, we take $w_i$ to be constant equal to $i - 1$.) Now set
$$
G(x) := m \sum_{i=1}^\ell H(w_i(x),x).
$$
Then we have
\[
\sup_{w \in U_x} H(w, x) \overset{(\ref{lemZresult})}{\le} G(x) \le m\cdot \ell\cdot \sup_{w \in W} H(w, x).
\]
So this $G$ is as desired, given that $m\cdot \ell$ can easily be bounded by an integer power of $q$.
\end{proof}

\section{Limits of $\cCexp$-functions}
\label{sec:limits}

This section contains various results about limits, namely: Existence of limits in various contexts is given by
$\cCexp$-conditions, and the limit itself is a $\cCexp$-function in the parameters.
We consider both, classical pointwise limits, and  $L^p$-limits. Even though we obtain
a whole variety of different results, it should be noted that certain other, subtly different
statements do not seem to hold; see the remark after Theorem~\ref{limits}.

\subsection{Pointwise limits and continuity}
The first result states that limits being $0$ is a $\cCexp$-condition and that convergence cannot
be arbitrarily slow.

\begin{thm}[$0$-limits]\label{thm:limits0}
Let $H$ be in $\cCexp(X \times \VG^n)$,
where $X$ is a definable set and where $n\geq 0$.
\begin{enumerate}
 \item
 The set $\{x \in X \mid \lim\limits_{\lambda,\ \|\lambda\|\to +\infty} |H (x,\lambda) |_{\C} = 0\}$ is a $\cCexp$-locus.
 \item
There exists a rational number $r<0$ and a definable function $\alpha: X\to \VG$, such that for each $F\in \Loc_{ \gg1}$, for each $\psi \in \cD_F$ and each $x\in X_F$, the following holds.

If one has
\begin{equation}\label{limits:upper000}
\lim\limits_{\lambda,\ \|\lambda\|\to +\infty} |H_{F,\psi} (x,\lambda) |_{\C}  =0 ,
\end{equation}
then one actually has
\begin{equation*}%\label{limits:upper001}
| H_{F,\psi} (x,\lambda) |_{\C}  \le q_F^{ r \|\lambda\|} \mbox{ for all  $\lambda \in \Z^n$ with $\| \lambda \|>\alpha_F(x)$}.
\end{equation*}
\end{enumerate}
\end{thm}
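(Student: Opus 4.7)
The plan is to prove (2) first, from which (1) will follow naturally. The strategy reduces from $\lambda\in\VG^n$ to one value-group variable via the approximate supremum of Theorem~\ref{thm:fam:gen}, and then exposes the asymptotic behavior via the reparameterization of Proposition~\ref{repar}. For the reduction, I would consider $H'(\lambda,x,N) := H(x,\lambda)\cdot\11_{\{\|\lambda\|\ge N\}}\in\cCexp(\VG^n\times X\times\VG)$ and apply Theorem~\ref{thm:fam:gen} with $w=\lambda$ and parameters $(x,N)$, obtaining a non-negative real-valued $G\in\cCexp(X\times\VG)$ and a definable $d\in\VG$ such that, whenever $H(x,\cdot)$ is bounded on $\{\|\lambda\|\ge N\}$,
\[
\sup_{\|\lambda\|\ge N}|H(x,\lambda)|_\CC^2 \le G(x,N) \le q^{d}\sup_{\|\lambda\|\ge N}|H(x,\lambda)|_\CC^2.
\]
``Being eventually bounded'', i.e., $\forall N\gg1$: $H(x,\cdot)$ is bounded on $\{\|\lambda\|\ge N\}$, is a $\cCexp$-condition on $x$ by Theorem~\ref{thm:fam}(1) combined with Proposition~\ref{for:all:large:mu}, and is necessary for $\lim=0$. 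On this eventually-bounded locus, $\lim_{\|\lambda\|\to\infty}|H(x,\lambda)|_\CC=0$ iff $\lim_{N\to\infty}G(x,N)=0$, and the pointwise bound asked for in (2) will follow from $G(x,\|\lambda\|)\le q^{2r\|\lambda\|}$ via $|H(x,\lambda)|_\CC^2\le G(x,\|\lambda\|)$.

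Next, I would apply Proposition~\ref{repar} to $G$ on $U:=X\times\VG$, obtaining $\sigma\colon X\times\VG\to U_{\rm par}\subset\RFss\times X\times\VG$ over $X\times\VG$ and a finite Presburger partition $\{A_j\}$ of $U_{\rm par}$ with
\[
G_{\rm par}(z,x,N)=\sum_i c_{ij}(z,x)\,N^{a_{ij}}q^{b_{ij}N}
\]
on each $A_j$, where $c_{ij}\in\cCexp(B_j)$ and the pairs $(a_{ij},b_{ij})\in\NN\times\QQ$ are pairwise distinct and uniform in $F,\psi,x,z$. For fixed $x$, arbitrarily large $N$ visits only cells with $\beta_j=+\infty$; on such a cell the sum tends to $0$ along its unbounded arithmetic progression iff $c_{ij}(z,x)=0$ for every $i$ with $b_{ij}\ge0$, since the distinct monomial growth orders ordered lex by $(b_{ij},a_{ij})$ preclude cancellation of the leading term (the same non-cancellation reasoning as in Example~\ref{ex:existq}). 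Hence $\lim_NG(x,N)=0$ is equivalent to
\[
\bigwedge_{j:\,\beta_j=+\infty}\ \forall z\in\RFss\colon\ \bigl((z,x)\in B_j\ \Rightarrow\ \bigwedge_{i:\,b_{ij}\ge0}c_{ij}(z,x)=0\bigr),
\]
a $\cCexp$-condition by Proposition~\ref{locbasicop} (the antecedent of the implication is definable, each equality $c_{ij}(z,x)=0$ is of $\cCexp$-class, and $\forall z\in\RFss$ is allowed). Intersecting with ``eventually bounded'' yields (1).

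For (2): once $\lim=0$, only terms with $b_{ij}<0$ survive, and I would set $A:=\max_{i,j}a_{ij}$ and $B:=\max\{b_{ij}:b_{ij}<0\}<0$, both universal constants. By Theorem~\ref{thm:fam}(2) applied to each $c_{ij}\in\cCexp(B_j)$ (with $z$ ranging over the finite residue-ring fiber $\{z:(z,x)\in B_j\}$, hence automatically bounded in $z$ for each $x$), there is a definable $\gamma\colon X\to\VG$ with $|c_{ij}(z,x)|_\CC\le q^{\gamma(x)}$ uniformly in $i,j,z$. Hence for $N$ large,
\[
G(x,N)\ \le\ K\,q^{\gamma(x)}N^Aq^{BN}
\]
for a universal constant $K$ counting the finitely many terms. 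Pick any rational $r\in(B/2,0)$, non-empty as $B<0$; the inequality $Kq^{\gamma(x)}N^Aq^{BN}\le q^{2rN}$ rearranges to $(2r-B)N\ge A\log_q N+\gamma(x)+\log_q K$, which holds for $N\ge\alpha(x)$ with $\alpha\colon X\to\VG$ definable (the $\log_q N$ term is absorbed exactly as in the proof of Theorem~\ref{thm:presburger-fam}). Taking square roots yields $|H(x,\lambda)|_\CC\le q^{r\|\lambda\|}$ for $\|\lambda\|>\alpha(x)$, with $r<0$ rational and independent of $x,F,\psi$.

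The hard part will be coordinating the approximate-supremum reduction with the reparameterization, in particular tracking the auxiliary residue-ring variable $z$ introduced by Proposition~\ref{repar} and phrasing the limit condition in the purely positive $\cCexp$-logic of Proposition~\ref{locbasicop}; this relies on using ``$\forall N\gg1$'' via Proposition~\ref{for:all:large:mu} as a stand-in for a forbidden existential quantifier, and on permitting implications with definable antecedents.
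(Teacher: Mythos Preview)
Your proof is correct and follows essentially the same approach as the paper: reduce the $\VG^n$-limit to a single $\VG$-variable via the approximate supremum of Theorem~\ref{thm:fam:gen}, then analyze that one-variable limit through the explicit shape provided by Proposition~\ref{repar}. The paper packages the second step as Theorem~\ref{limits:basic} and cites it, whereas you unfold that argument inline; the paper also takes the supremum over the level set $\{\gamma=N\}$ rather than your $\{\|\lambda\|\ge N\}$, and in its proof of Theorem~\ref{limits:basic} it eliminates the auxiliary $\RFss$-variable $z$ on the unbounded cells instead of carrying it through with a $\forall z$ as you do---but these are cosmetic differences, not substantive ones.
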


The above result also holds for more general kinds of limits:

\begin{thm}[$0$-limits]\label{thm:limits0:gen}
Let $H$ be in  $\cCexp(X \times Y)$,
where $X$ and $Y$ are definable sets, and let $\gamma:Y\to \VG_{\ge 0}$ be a surjective definable function.
\begin{enumerate}
 \item
 The set $\{x \in X \mid \lim\limits_{y\in Y,\ \gamma(y)\to +\infty} |H (x,y) |_{\C} = 0\}$ is a $\cCexp$-locus.
 \item
There exists a rational number $r<0$ and a definable function $\alpha: X\to \VG$ such that for all $F\in \Loc_{ \gg 1}$, for each $\psi \in \cD_F$ and each $x\in X_F$, the following holds.

If one has
\begin{equation}\label{limits:upper0}
\lim\limits_{y\in Y_F,\ \gamma_F(y)\to +\infty} |H_{F,\psi} (x,y) |_{\C}  =0 ,
\end{equation}
then one actually has
\begin{equation}\label{limits:upper}
| H_{F,\psi} (x,y) |_{\C}  \le q_F^{ r \gamma_F(y) } \mbox{ for all  $y\in Y_F$ with $\gamma_F(y)>\alpha_F(x)$}.
\end{equation}
\end{enumerate}
\end{thm}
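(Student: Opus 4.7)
The plan is to reduce Theorem~\ref{thm:limits0:gen} to a one-variable limit problem on $\VG$ via Theorem~\ref{thm:fam:gen}, then analyze the reduced problem using the reparameterization result Proposition~\ref{repar}, and finally invoke Theorem~\ref{thm:fam} to express the rate of convergence via a definable function.

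For the reduction, for each $(x,\mu) \in X \times \VG_{\ge 0}$, set $s(x,\mu) := \sup\{|H(x,y)|_\C \mid y \in Y,\ \gamma(y)=\mu\}$. Applying Theorem~\ref{thm:fam:gen} to the $\cCexp$-function $(y,x,\mu) \mapsto |H(x,y)|_\C^2 \cdot \mathbf{1}_{\{\gamma(y)=\mu\}}$ on $Y \times (X \times \VG_{\ge 0})$ produces a non-negative real-valued $G \in \cCexp(X \times \VG_{\ge 0})$ and a definable $d \in \VG$ with $s(x,\mu)^2 \le G(x,\mu) \le q^d s(x,\mu)^2$ whenever $s(x,\mu) < \infty$. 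Thus (\ref{limits:upper0}) is equivalent to the conjunction of (a) eventual boundedness ``$\forall \mu \gg 1\colon y \mapsto H(x,y)$ is bounded on $\gamma^{-1}(\mu)$'', which is a $\cCexp$-condition by Theorem~\ref{thm:fam} (1) combined with Proposition~\ref{for:all:large:mu}, and (b) $\lim_{\mu \to +\infty} G(x,\mu)=0$. Moreover, any bound of the form $G(x,\mu) \le q^{2r\mu}$ with $r<0$ yields $s(x,\mu) \le q^{r\mu}$ and hence (\ref{limits:upper}). So it suffices to prove both assertions of the theorem for $G$ in place of $H$, with $Y = \VG_{\ge 0}$ and $\gamma = \mathrm{id}$.

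Apply Proposition~\ref{repar} to $G$, obtaining a finite cell decomposition of the reparameterized domain in $\RFss \times X \times \VG$. Only cells $A_j$ with $\beta_j = +\infty$ are relevant for $\mu \to +\infty$, and on such a cell Proposition~\ref{repar} (3) gives
\[
G_{\mathrm{par}}(z,\mu) = \sum_i c_{ij}(z)\, \mu^{a_{ij}}\, q^{b_{ij}\mu}
\]
with $c_{ij} \in \cCexp(B_j)$ and pairwise distinct pairs $(a_{ij},b_{ij}) \in \NN \times \QQ$. Distinctness forces pairwise distinct asymptotic growth rates, so asymptotic cancellation cannot occur and $G_{\mathrm{par}}(z,\mu) \to 0$ as $\mu \to +\infty$ along the cell if and only if $c_{ij}(z) = 0$ for every $i$ with $b_{ij} \ge 0$ (i.e.\ for every non-decaying term). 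This is a finite conjunction of $\cCexp$-equalities; $\forall$-quantifying over the $\RFss$-coordinate of $z \in B_j$ and over the finitely many relevant cells, then applying Proposition~\ref{locbasicop}, produces the desired $\cCexp$-condition on $x$. For the rate of convergence, when the limit is $0$ only indices with $b_{ij} < 0$ contribute nontrivially; let $b^*$ be the maximum of these surviving $b_{ij}$ across all cells, and pick $r \in \QQ$ with $b^*/2 < r < 0$. Theorem~\ref{thm:fam} (2) provides a definable $\alpha_0\colon X \to \VG$ with $|c_{ij}(\xi,x)|_\C \le q^{\alpha_0(x)}$ uniformly in $\xi$, $i$, $j$. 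A standard estimate absorbs $\alpha_0(x)$ and the polynomial factors $\mu^{a_{ij}}$ into the exponential gap between $b^*$ and $2r$, yielding a definable $\alpha(x)$ with $|G_{\mathrm{par}}(z,\mu)| \le q^{2r\mu}$ for $\mu \ge \alpha(x)$; hence $s(x,\mu) \le q^{r\mu}$, which is (\ref{limits:upper}).

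The main obstacle is not the asymptotic analysis itself --- which is essentially a generalized Dirichlet polynomial calculation --- but rather producing the threshold $\alpha(x)$ in a way that is \emph{definable}, and thus, in particular, uniform in $F$, $\psi$ and $x$. This uniformity is exactly what the three deep inputs above are designed to provide: Proposition~\ref{repar} gives finitely many exponent pairs $(a_{ij},b_{ij})$ independent of $F$, $\psi$, $x$ together with a cell decomposition respecting $X$-parameters; Theorem~\ref{thm:fam} controls the coefficients $c_{ij}$ by a single definable $\alpha_0$; and Theorem~\ref{thm:fam:gen} enables the reduction from an arbitrary $Y$ to $\VG$ without losing definability. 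Without any one of these, even the formulation of a uniform rate would be out of reach in the present generality.
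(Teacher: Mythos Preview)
Your approach is essentially the paper's: reduce via Theorem~\ref{thm:fam:gen} to a single $\VG$-variable, then analyze that function.  The paper packages the second step entirely into the black box Theorem~\ref{limits:basic} (limits of $\cCexp$-functions on $X\times\VG_{\ge0}$), whereas you inline that argument using Proposition~\ref{repar} and Theorem~\ref{thm:fam} directly.  The content is the same; the paper's route is shorter on the page because the work was already done in proving Theorem~\ref{limits:basic}.

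Two small points to clean up.  First, Theorem~\ref{thm:fam:gen} already squares: applying it to $|H|^2\cdot\mathbf{1}_{\{\gamma(y)=\mu\}}$ yields $G\approx s(x,\mu)^4$, not $s(x,\mu)^2$.  Apply it to $H\cdot\mathbf{1}_{\{\gamma(y)=\mu\}}$ instead (or just adjust the exponents); nothing else changes.  Second, when you $\forall$-quantify over the $\RFss$-coordinate of $z\in B_j$, you implicitly use that for each $x$ there is at most one $\xi$ with $(\xi,x)\in B_j$ when $A_j$ is unbounded.  This is true (two distinct such $\xi$ would force $(\xi,x,\mu)$ and $(\xi',x,\mu)$ both into $U_{\rm par}$ for large $\mu\equiv e_j\bmod n_j$, contradicting that $\sigma$ is a bijection), and is exactly the ``get rid of the reparameterization'' step in the proof of Theorem~\ref{limits:basic}; you should state it.
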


For some $F, \psi, x \in X_F$, the limits appearing in Theorems~\ref{thm:limits0} and \ref{thm:limits0:gen} might not even exist.
(The theorems do not assume that they do.)
The next result states that existence of limits is also a $\cCexp$-condition and that
if a limit exists, then its value is given by a $\cCexp$-function.

\begin{thm}[Limits]\label{limits}
Let $H$ be in $\cCexp(X\times \VG^n)$ for some definable set $X$.
\begin{enumerate}
 \item
  The set $\{x \in X \mid \lim\limits_{\|\lambda\|\to +\infty} H (x,\lambda) \text{ exists in $\C$}\}$ is a $\cCexp$-locus.
\item
There exists $G$ in $\cCexp(X)$ such that the following holds for all $F\in \Loc_{ \gg1}$, for each $\psi \in \cD_F$ and each $x\in X_F$.
$$
\mbox{If }
\lim_{\|\lambda\|\to+\infty}  H_{F,\psi} (x,\lambda) \mbox{ exists in $\CC$},
$$
then
$$
G_{F,\psi}(x) = \lim_{\|\lambda\|\to+\infty}  H_{F,\psi} (x, \lambda).
$$
\end{enumerate}
\end{thm}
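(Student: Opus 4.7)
The plan is first to prove the case $n=1$ by reducing the question of existence and value of $\lim_{|\lambda|\to\infty} H(x,\lambda)$ to an integrability statement about the discrete difference, and then to bootstrap from $n=1$ to arbitrary $n$ by slicing and by applying Theorem~\ref{thm:limits0:gen}.

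For $n=1$, introduce the $\cCexp$-function $D(x,\lambda) := H(x,\lambda+1) - H(x,\lambda) \in \cCexp(X\times\VG)$. I first claim that $\lim_{\lambda\to+\infty} H(x,\lambda)$ exists if and only if $D(x,\cdot)$ is $L^1$-summable on $\VG_{\ge 0}$. The backward direction is telescoping: the partial sums $\sum_{\lambda=0}^{N-1} D(x,\lambda) = H(x,N) - H(x,0)$ converge absolutely, so $H(x,N)$ converges. For the forward direction, convergence of $H(x,\cdot)$ at $+\infty$ forces $D(x,\lambda)\to 0$ as $\lambda\to+\infty$; applying Theorem~\ref{thm:limits0} to $\tilde D(x,\lambda) := D(x,\lambda)\cdot \mathbf{1}_{\lambda \ge 0} \in \cCexp(X\times\VG)$, whose two-sided limit is zero, produces a fixed rational $r<0$ and a definable $\alpha\colon X \to \VG$ with $|D(x,\lambda)|_\CC \le q_F^{r\lambda}$ for $\lambda \ge \alpha_F(x)$, from which absolute summability on $\VG_{\ge 0}$ follows. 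Theorem~\ref{thm:mot.int.} then provides $S_+ \in \cCexp(X)$ with $S_+(x) = \sum_{\lambda \ge 0} D(x,\lambda)$ on the (also $\cCexp$) summability locus, and there $\lim_{\lambda\to+\infty} H(x,\lambda) = H(x,0) + S_+(x)$. The symmetric analysis at $-\infty$ yields $S_- \in \cCexp(X)$ with $\lim_{\lambda\to-\infty} H(x,\lambda) = H(x,0) - S_-(x)$ on the relevant locus. Therefore $\lim_{|\lambda|\to\infty} H(x,\lambda)$ exists iff $D(x,\cdot)$ is $L^1$ on all of $\VG$ and $S_+(x) + S_-(x) = 0$, a $\cCexp$-condition on $x$ by Proposition~\ref{locbasicop}, and when it holds the common value is $G(x) := H(x,0) + S_+(x) \in \cCexp(X)$.

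For general $n \ge 1$, slice by $\tilde H(x,t) := H(x,t,0,\dots,0) \in \cCexp(X\times\VG)$ and let $G \in \cCexp(X)$ be the function produced by the $n=1$ case applied to $\tilde H$, so that $G(x) = \lim_{|t|\to\infty}\tilde H(x,t)$ whenever the slice limit exists. If the overall limit $\lim_{\|\lambda\|\to+\infty} H(x,\lambda) = L$ exists, then restricting to $\lambda = (t,0,\dots,0)$ shows that the slice limit exists and equals $L$, so $G(x) = L$ and hence $|H(x,\lambda) - G(x)|^2 \to 0$; conversely, $|H(x,\lambda) - G(x)|^2 \to 0$ trivially gives $\lim_{\|\lambda\|\to+\infty} H(x,\lambda) = G(x)$. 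Thus the overall limit exists iff
\[
\lim_{\|\lambda\|\to+\infty} |H(x,\lambda) - G(x)|^2 = 0,
\]
which is a $\cCexp$-condition by Theorem~\ref{thm:limits0:gen} applied with $\gamma(\lambda) := \|\lambda\|$ to the non-negative real-valued function $(x,\lambda)\mapsto |H(x,\lambda) - G(x)|^2$. This latter function is of $\cCexp$-class, since $|f|^2 = f\cdot\overline f$ and complex conjugation preserves the form~\eqref{fexp} after replacing $\psi$ by $\bar\psi$ and negating the $\VF$- and $\RF_n$-valued ingredients. The locus so defined is the $\cCexp$-locus required for part~(1), and on it the limit equals $G(x)$, giving part~(2).

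The principal obstacle is the forward direction in the $n=1$ step: converting mere pointwise convergence into a $\cCexp$-expressible integrability condition requires the quantitative, uniform-in-$x$ exponential decay of $D$ supplied by Theorem~\ref{thm:limits0}, since without it the telescoping sum might converge only conditionally and would not be controlled by the $\cCexp$-integration machinery of Theorem~\ref{thm:mot.int.}. Once that decay is in hand, Theorems~\ref{thm:mot.int.} and~\ref{thm:limits0:gen}, together with Proposition~\ref{locbasicop}, handle the remaining bookkeeping.
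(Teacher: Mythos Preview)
Your proof is correct; it is close in spirit to the paper's but organized differently. For general $n$, the paper expresses existence of the limit via a Cauchy criterion: it applies the approximate-supremum Theorem~\ref{thm:fam:gen} to $H(x,y_1)-H(x,y_2)$ restricted to $\|y_i\|\ge\lambda$, obtaining a $\cCexp$-function $G'(x,\lambda)$ comparable to $\sup_{y_1,y_2}|H(x,y_1)-H(x,y_2)|^2$, and then checks $G'\to 0$ via the single-variable Theorem~\ref{limits:basic}; for part~(2) it slices along $(\mu,0,\dots,0)$, exactly as you do. Your alternative---first slice to get the candidate $G$, then encode existence as $\lim_{\|\lambda\|\to\infty}|H(x,\lambda)-G(x)|^2=0$ via Theorem~\ref{thm:limits0:gen}---uses the same underlying machinery (since Theorem~\ref{thm:limits0:gen} is itself derived from Theorems~\ref{thm:fam:gen} and~\ref{limits:basic}) and is arguably slicker, though the paper's Cauchy route has the advantage of simultaneously yielding the more general Theorem~\ref{limits:gen}. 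Your separate $n=1$ argument via the discrete difference $D$ and $L^1$-summability is correct and pleasant, but in the paper's logical order it is a detour: the exponential-decay input you take from Theorem~\ref{thm:limits0} already rests on Theorem~\ref{limits:basic}, which essentially \emph{is} the one-variable case you are proving; one may as well invoke Theorem~\ref{limits:basic} directly (once for $\lambda\ge 0$ and once for $\lambda\mapsto H(x,-\lambda)$ on $\VG_{\ge 0}$, then equate the two limit values).
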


Again, we can ask for a version of this for more general limits. We obtain:

\begin{thm}[Limits]\label{limits:gen}
Let $H$ be in $\cCexp(X \times Y)$,
where $X$ and $Y$ are definable sets, and let $\gamma:Y\to \VG_{\ge 0}$ be a surjective definable function. Then
the set $\{x \in X \mid \lim\limits_{y\in Y,\ \gamma(y)\to +\infty} H (x,y) \text{ exists in }\C\}$ is a $\cCexp$-locus.
\end{thm}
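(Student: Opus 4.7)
The plan is to reduce existence of the limit to a $0$-limit statement via the Cauchy criterion, then invoke Theorem~\ref{thm:limits0:gen}~(1). Concretely, I would introduce the $\cCexp$-function
\[
\tilde H(x,y,y'):=H(x,y)-H(x,y')
\]
on $X\times Y\times Y$ (which lies in $\cCexp$ since the class is a $\CC$-algebra) and the definable function
\[
\tilde\gamma\colon Y\times Y\longrightarrow \VG_{\ge 0},\quad (y,y')\mapsto \min(\gamma(y),\gamma(y')).
\]
This $\tilde\gamma$ is surjective: for any $\mu\in\VG_{\ge 0}$, surjectivity of $\gamma$ yields some $y\in Y$ with $\gamma(y)=\mu$, and then $\tilde\gamma(y,y)=\mu$.

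Next, I would apply Theorem~\ref{thm:limits0:gen}~(1) to $\tilde H$ with $Y\times Y$ and $\tilde\gamma$ in place of $Y$ and $\gamma$, obtaining that
\[
Z:=\bigl\{\,x\in X\ \bigm|\ \lim_{(y,y'),\,\tilde\gamma(y,y')\to+\infty}|\tilde H(x,y,y')|_\CC=0\,\bigr\}
\]
is a $\cCexp$-locus. The defining condition of $Z$ is, for each fixed $F,\psi,x$, equivalent to the Cauchy criterion for the values $H_{F,\psi}(x,y)$ along the filter base $\{B_\mu\}_{\mu\in\VG_{\ge 0}}$ with $B_\mu:=\{y\in Y_F:\gamma_F(y)\ge\mu\}$, since $\tilde\gamma(y,y')\ge\mu$ holds if and only if both $\gamma(y)\ge\mu$ and $\gamma(y')\ge\mu$.

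To finish, I would note that surjectivity of $\gamma_F$ makes each $B_\mu$ nonempty, and that the base is countable (indexed by $\ZZ_{\ge 0}$); since $\CC$ is complete, the Cauchy criterion is equivalent to existence of $\lim_{\gamma(y)\to+\infty} H_{F,\psi}(x,y)$ in $\CC$. Therefore $Z$ coincides with the set in the statement, and is a $\cCexp$-locus as required. There is no real obstacle in this argument: all the heavy lifting sits inside Theorem~\ref{thm:limits0:gen}~(1), and the rest is a standard completeness-of-$\CC$ observation packaged via the locus formalism of Section~\ref{sec:locbasicop}.
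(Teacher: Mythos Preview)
Your proof is correct and follows essentially the same Cauchy-criterion strategy as the paper. The paper defines $H'(x,\lambda,y_1,y_2)$ to equal $H(x,y_1)-H(x,y_2)$ when $\gamma(y_1),\gamma(y_2)\ge\lambda$ and $0$ otherwise, then applies Theorem~\ref{thm:fam:gen} directly to obtain an approximate-supremum function $G'(x,\lambda)$ and invokes Theorem~\ref{limits:basic} to check that $G'(\cdot,\lambda)\to 0$ is a $\cCexp$-condition; you instead package this whole reduction inside a single call to Theorem~\ref{thm:limits0:gen}~(1), which is cleaner since that theorem already absorbs the approximate-supremum step internally.
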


Note that surprisingly, in this generality it is not straightforward to prove that the value of the limit is of $\cCexp$-class,
and we even doubt that this is true. It seems that such a result would require some better understanding of the number of rational
points of families of varieties over finite fields. On the other hand, it should be possible to obtain the generalization
in a slightly altered context, namely after adding function symbols for Skolem functions to the sorts $\RFss$, or, after adding more denominators like in the rational motivic functions of \cite{Kien:rational}.

From Theorem~\ref{limits:gen}, one easily deduces that continuity is a $\cCexp$-condition.
In the following, given a definable set $Z \subset \VF^n$, we write $\bar Z$ for its topological closure
(i.e., $\bar{Z}_F$ is the topological closure of $Z_F$ in $F^n$ for each $F$), which is also definable.
\begin{cor}[Continuity]\label{cor.cont}
Let $H$ be in $\cCexp(X \times Y)$, where $X$ and $Y$ are definable sets and $Y \subset \VF^n$.
Then
\[\{(x,y) \in X \times Y  \mid H(x, \cdot) \text{ is continuous at } y\}\]
and
\begin{align*}
\{(x,y) \in X \times (\bar{Y}\setminus Y)  \mid \,&H(x, \cdot) \text{ has an extension to $Y\cup \{y\}$,}\\
&\text{continuous at $y$}  \}
\end{align*}
are $\cCexp$-loci.
\end{cor}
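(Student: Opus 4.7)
The plan for both parts is to express the relevant property as ``a supremum tends to $0$ as a value-group parameter $\mu$ tends to $+\infty$'', and then use Theorem~\ref{thm:fam:gen} to replace that supremum (squared) by a $\cCexp$-function to which Theorem~\ref{thm:limits0} applies. The main obstacle to overcome is that continuity at $y$ is naturally expressed as a limit along $\ord(z-y)\to+\infty$, and the associated ``$\gamma$'' depends on the parameter $y$, so Theorem~\ref{thm:limits0:gen} does not apply directly. Introducing $\mu$ as an auxiliary value-group parameter and sandwiching the supremum between two copies of a $\cCexp$-function $G(x,y,\mu)$ sidesteps this.

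For the first locus, note that $H(x,\cdot)$ is continuous at $y\in Y$ if and only if $\lim_{z\in Y,\ \ord(z-y)\to+\infty}(H(x,z)-H(x,y))=0$. Consider the definable set $U:=\{(x,y,\mu,z)\in X\times Y\times\VG\times Y\mid \ord(z-y)\ge\mu\}$ and the $\cCexp$-function $\tilde H(x,y,\mu,z):=H(x,z)-H(x,y)$, extended by $0$ off $U$. Applying Theorem~\ref{thm:fam:gen} with parameters $(x,y,\mu)$ and supremum variable $z$ yields a non-negative real-valued $G\in\cCexp(X\times Y\times\VG)$ and a definable $d\in\VG$ such that, whenever $z\mapsto\tilde H(x,y,\mu,z)$ is bounded,
\[
\sup_{z}|\tilde H(x,y,\mu,z)|_{\CC}^{2}\ \le\ G(x,y,\mu)\ \le\ q^{d}\sup_{z}|\tilde H(x,y,\mu,z)|_{\CC}^{2}.
\]
I then claim that continuity of $H(x,\cdot)$ at $y$ is equivalent to the conjunction of (A) for all $\mu\gg 1$, $z\mapsto\tilde H(x,y,\mu,z)$ is bounded, and (B) $\lim_{\mu\to+\infty}G(x,y,\mu)=0$. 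Continuity forces (A), and then the sandwich forces (B); conversely, (A) activates the sandwich for $\mu\gg 1$, so (B) forces $\sup_z|\tilde H|_{\CC}\to 0$, which is continuity. Condition (A) is a $\cCexp$-condition by Theorem~\ref{thm:fam}(1) combined with Proposition~\ref{for:all:large:mu}, and (B) is one by Theorem~\ref{thm:limits0} applied with $n=1$ and parameter space $X\times Y$; Proposition~\ref{locbasicop} yields their conjunction.

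For the second locus, I would use the Cauchy criterion: $H(x,\cdot)$ admits an extension to $Y\cup\{y\}$ continuous at $y\in\bar Y\setminus Y$ if and only if
\[
\lim_{\mu\to+\infty}\sup\{\,|H(x,z_1)-H(x,z_2)|_{\CC}\mid z_1,z_2\in Y,\ \ord(z_i-y)\ge\mu\ \text{for}\ i=1,2\,\}=0.
\]
Applying the same machinery to $\tilde H'(x,y,\mu,z_1,z_2):=H(x,z_1)-H(x,z_2)$ on the analogous definable subset of $X\times(\bar Y\setminus Y)\times\VG\times Y\times Y$ produces a $\cCexp$-function $G'(x,y,\mu)$ approximating the squared supremum in $(z_1,z_2)$, and the same two-step analysis (boundedness of $\tilde H'$ for all $\mu\gg 1$, combined with $\lim_\mu G'=0$) exhibits the second set as a $\cCexp$-locus. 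No analogue of Theorem~\ref{limits}(2) is needed here, since only existence of the extension must be certified, not its value.
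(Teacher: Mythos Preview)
Your proof is correct, but it takes a longer route than the paper's. The concern you raise---that the function $\gamma(z)=\ord(z-y)$ depends on the parameter $y$, so Theorem~\ref{thm:limits0:gen} does not apply directly---is easily circumvented by the change of variables $w=z-y$. Concretely, set $Y'=\VF^n$, $\gamma(w)=\max(0,\ord(w))$, and define $H'\in\cCexp((X\times Y)\times Y')$ by $H'(x,y,w):=(H(x,y+w)-H(x,y))\cdot\mathbf{1}_{y+w\in Y}$. Then $H(x,\cdot)$ is continuous at $y$ if and only if $\lim_{w,\ \gamma(w)\to+\infty}H'(x,y,w)=0$, and Theorem~\ref{thm:limits0:gen} applies directly with parameter space $X\times Y$. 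The second locus is handled the same way using Theorem~\ref{limits:gen} instead (the paper's citation of only Theorem~\ref{thm:limits0:gen} is a minor imprecision).

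Your approach---introducing an explicit $\VG$-parameter $\mu$, invoking Theorem~\ref{thm:fam:gen} to approximate the supremum, and then appealing to Theorem~\ref{thm:limits0}---is essentially a manual unfolding of the proof of Theorem~\ref{thm:limits0:gen}. It works, and the care you take with the boundedness hypothesis (your condition~(A), handled via Theorem~\ref{thm:fam}(1) and Proposition~\ref{for:all:large:mu}) is genuinely needed in your setup. But the substitution $w=z-y$ makes all of this machinery unnecessary and yields the paper's two-line proof.
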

\begin{proof}
The conditions at $(x,y)$ can be expressed respectively as follows:
\[
\begin{cases}
 \lim_{y' \in Y, \ord(y' - y) \to +\infty} (H(x, y') - H(x,y)) = 0; \\
 \lim_{y' \in Y, \ord(y' - y) \to +\infty} H(x, y') \text{ exists}.
\end{cases}
\]
Now use Theorem~\ref{thm:limits0:gen} and Proposition~\ref{locbasicop}.
\end{proof}

Note that if $H \in \cCexp(Y)$ is continuous on $Y$ and if $\lim_{y' \in Y, y' \to y} H(x, y')$ exists for each $y\in\bar{Y}\setminus Y$, then the unique continuous extension of $H$ is not known by us to be of $\cCexp$-class, for the same reasons as explained after Theorem~\ref{limits:gen}.
However, it is of $\cCexp$-class in many slightly more restrictive cases. The most general setting in which this can be proven would probably be
very technical, so in the following, we just prove it under reasonable assumptions.

\begin{prop}[Continuous extension]\label{prop.cont.ext}
Let $H$ be in $\cCexp(X \times Y)$, where $X$ and $Y$ are definable sets and $Y \subset \VF^n$ and suppose that $\bar Y$ is clopen (i.e., $\bar Y_F \subset F^n$ is open and closed for every $F \in \Loc_{\gg 1}$).
Then $H$ can be extended to a $\cCexp$-function $\bar H$ on $X \times \bar Y$ such that for every $(x,y) \in X \times \bar Y$,
if $\lim_{y' \in Y, y' \to y} H(x, y')$ exists, then $\bar H(x,\cdot)$ is continuous at $y$.

Namely, with all indices, for every
$F \in \Loc_{\gg1}$, for every $\psi \in \cD_F$ and for every $(x,y) \in X_F \times \bar Y_F$,
if $\lim_{y' \in Y_F, y' \to y} H_{F,\psi}(x, y')$ exists, then $\bar H_{F,\psi}(x,\cdot)$ is continuous at $y$.	
\end{prop}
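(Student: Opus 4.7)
I will construct $\bar H$ by averaging $H$ over small balls; the clopen hypothesis on $\bar Y$ combined with the density of $Y$ in $\bar Y$ will make this averaging compatible across the thin set $\bar Y \setminus Y$. The starting observation is that $\bar Y \setminus Y$ is a definable subset of $\VF^n$ with empty interior (since $Y$ is dense in $\bar Y$), hence of dimension strictly less than $n$, and therefore of Haar measure zero. Consequently, for any $y \in \bar Y$ and any $\lambda$ large enough that $B_\lambda(y) \subseteq \bar Y$ (which exists since $\bar Y$ is open), one has $\vol(B_\lambda(y) \cap Y) = \vol(B_\lambda(y)) = q^{-n\lambda}$.

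With this in hand, define
\[
K(x,y,\lambda) := q^{n\lambda}\int_{y' \in B_\lambda(y) \cap Y} H(x,y')\,|dy'|,
\]
which is a $\cCexp$-function on $X \times \bar Y \times \VG$ by the parameter-dependent form of Theorem~\ref{thm:mot.int.} (remarked upon right after the theorem), since characteristic functions of definable sets are $\cCexp$-functions and $q^{n\lambda}$ is too. Precomposing with $\lambda \mapsto |\lambda|$ so that the one-sided limit $\lambda \to +\infty$ coincides with the symmetric limit treated by Theorem~\ref{limits}, one obtains an $L \in \cCexp(X \times \bar Y)$ with $L(x,y) = \lim_{\lambda \to +\infty} K(x,y,\lambda)$ whenever this limit exists. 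Finally set
\[
\bar H(x,y) := \tilde H(x,y) + 1_{\bar Y \setminus Y}(y)\cdot L(x,y),
\]
where $\tilde H$ is the extension of $H$ to $X \times \bar Y$ by zero outside $X \times Y$. This $\bar H$ is a $\cCexp$-function (using closure of $\cCexp$ under multiplication by characteristic functions of definable sets) and restricts to $H$ on $X \times Y$.

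To verify continuity, fix $(x,y) \in X \times \bar Y$ with $\lim_{y' \in Y,\, y' \to y} H(x,y') = L^*$ and fix $\epsilon > 0$. Choose $\mu$ large enough that $B_\mu(y) \subseteq \bar Y$ and $|H(x,y'') - L^*|_\CC < \epsilon$ for every $y'' \in Y \cap B_\mu(y)$. By the ultrametric property, any $y' \in B_\mu(y) \cap \bar Y$ satisfies $B_\mu(y') = B_\mu(y)$, and for $\lambda \ge \mu$ the ball $B_\lambda(y')$ lies inside $B_\mu(y)$. Thus the integrand in the definition of $K(x,y',\lambda)$ stays within $\epsilon$ of $L^*$ on $Y \cap B_\lambda(y')$, and averaging (using $\vol(\bar Y \setminus Y) = 0$ to convert the integral into a genuine average over a set of volume $q^{-n\lambda}$) gives $|K(x,y',\lambda) - L^*|_\CC \le \epsilon$ for every $\lambda \ge \mu$. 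Since $\epsilon$ was arbitrary, $K(x,y',\lambda) \to L^*$ as $\lambda \to +\infty$, so $L(x,y') = L^*$ for every $y' \in B_\mu(y) \cap (\bar Y \setminus Y)$. Combined with $|H(x,y') - L^*|_\CC < \epsilon$ on $B_\mu(y) \cap Y$ (and, in the case $y \in Y$, the convention used in Corollary~\ref{cor.cont} which forces $H(x,y) = L^*$), this yields $|\bar H(x,y') - L^*|_\CC \le \epsilon$ on $B_\mu(y) \cap \bar Y$, proving continuity of $\bar H(x,\cdot)$ at $y$.

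The main obstacle is the nullity of $\bar Y \setminus Y$: once this is established, the $p$-adic ultrametric does the heavy lifting, because shrinking balls around a nearby point $y'$ automatically remain inside the fixed ball $B_\mu(y)$, so a single neighborhood condition at $y$ simultaneously controls every average around every $y'$ nearby. A non-archimedean-flavoured subtlety to watch is that the $\cCexp$-function $K$ coming from Theorem~\ref{thm:mot.int.} is only guaranteed to coincide with the actual integral when the integrand is $L^1$; this is enough for us because near any $y$ where the hypothesis holds, $H(x,\cdot)$ is bounded on $B_\mu(y) \cap Y$ and the ball has finite measure, so the integral does exist on the entire range of $\lambda$ that matters in the argument.
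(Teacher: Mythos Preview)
Your approach is essentially the paper's: define the extension via the limit of ball-averages
\[
K(x,y,\lambda) := q^{n\lambda}\int_{B_\lambda(y)\cap Y} H(x,y')\,|dy'|,
\]
invoking Theorem~\ref{thm:mot.int.} for the integral and Theorem~\ref{limits} for the limit, and using that $\bar Y\setminus Y$ has measure zero (the paper says ``by dimension considerations''; your empty-interior argument is the same thing). One pleasant addition in your version is that you splice $L$ in only over $\bar Y\setminus Y$, so that your $\bar H$ literally restricts to $H$ on $Y$; the paper simply takes $\bar H$ to be the averaged limit on all of $\bar Y$, which need not agree with $H$ at points of $Y$ where $H(x,\cdot)$ is discontinuous.

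There is, however, a quantifier slip in your continuity paragraph. You write ``Since $\epsilon$ was arbitrary, $K(x,y',\lambda)\to L^*$ as $\lambda\to+\infty$, so $L(x,y')=L^*$ for every $y'\in B_\mu(y)\cap(\bar Y\setminus Y)$''. But $\mu=\mu(\epsilon)$ depends on $\epsilon$: for a fixed $y'\ne y$ there is no reason that $y'\in B_{\mu(\epsilon')}(y)$ once $\epsilon'$ is small, so you cannot send $\epsilon\to 0$ while holding $y'$ fixed. What your estimate actually yields is only $|K(x,y',\lambda)-L^*|\le\epsilon$ for all $\lambda\ge\mu(\epsilon)$; this does not force $\lim_\lambda K(x,y',\lambda)$ to exist, and without that existence Theorem~\ref{limits} gives no control whatsoever over the value $L(x,y')$. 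Thus $\bar H(x,y')$ is uncontrolled on $B_\mu(y)\cap(\bar Y\setminus Y)$ and the continuity at $y$ is not yet established. (The paper's own proof is terse at exactly this step---it only records that the averages at $y$ itself converge to $L^*$---so the issue is not specific to your write-up; it disappears under the global hypothesis, discussed just before the proposition, that the limit exists at every boundary point.)
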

\begin{proof}
We can simply define
\[
\bar H(x,y) := \lim_{\lambda \to \infty} q^{-n\lambda} \int_{B_\lambda(y) \cap Y} H(x,y') dy',
\]
where $B_\lambda(y) \subset \VF^n$ is the closed ball of valuative radius $\lambda$ around $y$ (considered as a definable set).
More precisely, we use Theorems~\ref{thm:mot.int.} and \ref{limits} to find an $\bar H$ such that
for every $F, \psi$ and every $x \in X_F, y \in Y_F$, we have
\[
\bar H_{F,\psi}(x,y) = \lim_{\lambda \to \infty} q_F^{-n\lambda} \int_{B_{F,\lambda}(y) \cap Y_F} H_{F,\psi}(x,y') dy',
\]
whenever the integral and the limit exist, where $B_{F,\lambda}(y)$ is the concrete closed ball in $F^n$ given by $\lambda \in \ZZ$ and $y \in F^n$.

Since $\bar Y_F$ is clopen, for sufficiently
big $\lambda$, we have  $B_{F,\lambda}(y) \subset  \bar Y_F$, so that we are just averaging over this ball (where $\bar Y_F\setminus Y_F$ has measure zero by dimension considerations).
If $H_{F,\psi}(x,\cdot)$ is continuous at $y$, then the integral exists for large $\lambda$, and continuity also implies that the limit of those average values exists
and is equal to $\lim_{y' \in Y_F, y' \to y} H_{F,\psi}(x, y')$, as desired.
\end{proof}

As usual (namely, by transfer for $\cCexp$-conditions) the above theorems imply corresponding transfer principles for limits, as follows.

\begin{cor}[Transfer principle for limits and continuity]\label{trans:lim}
Let $H$ be in $\cCexp(X \times Y)$,
where $X$ and $Y$ are definable sets, and let $\gamma:Y\to \VG$ be a definable function.

Then there exists $M$ such that, for any $F\in \Loc$ of residue characteristic $\ge M$,
the truth of each of the following statements depends only on (the isomorphism class of) the residue field of $F$;
here, in the 3rd and 4th statement, we assume $Y \subset \VF^n$ for some $n$.
\[
\lim\limits_{ y\in Y_F,\ \gamma_F(y) \to +\infty} H_{F,\psi} (x,y)   =0 \text{ for all $x\in X_F$ and all $\psi\in\cD_F$};
\]
\[
\lim\limits_{ y\in Y_F,\ \gamma_F(y) \to +\infty} H_{F,\psi} (x,y)  \text{ exists for all $x\in X_F$ and all $\psi\in\cD_F$};
\]
\[
H_{F,\psi}(x,\cdot) \text{ is continuous on $Y_F$ for all $x \in X_F$};
\]
\[
H_{F,\psi}(x,\cdot) \text{ has a continuous extension to ${\bar Y}_F$  for all $x \in X_F$.}
\]
\end{cor}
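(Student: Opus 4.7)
The plan is to recognize each of the four statements as a $0$-ary $\cCexp$-condition (a $\cCexp$-condition with no free variables) and then invoke Theorem~\ref{thm.trans}, whose conclusion is exactly that the truth of such a condition depends only on the isomorphism class of the residue field, provided the residue characteristic is sufficiently large. The implicit quantification over $\psi \in \cD_F$ appearing in each statement is built into Theorem~\ref{thm.trans} itself, so it need not be treated by a separate quantifier.

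First I would convert each ``property at $x$'' into a $\cCexp$-condition on $x$. For the first statement this is supplied directly by Theorem~\ref{thm:limits0:gen}(1); for the second by Theorem~\ref{limits:gen}; for the third and fourth by the two parts of Corollary~\ref{cor.cont}, which produce $\cCexp$-conditions on $(x,y)$ expressing, respectively, continuity of $H(x,\cdot)$ at $y$ and the existence of a continuous extension to $y \in \bar Y \setminus Y$.

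Next I would close up the remaining free variables by repeated use of Proposition~\ref{locbasicop}(3). For the third and fourth statements this means first applying $\forall y$ (over $Y$, respectively over $\bar Y \setminus Y$, both of which are definable, the latter as a difference of definable sets) to turn pointwise continuity into a $\cCexp$-condition on $x$ alone; then a further universal quantifier $\forall x \in X$ reduces each of the four statements to a $0$-ary $\cCexp$-condition. Applying Theorem~\ref{thm.trans} to each of these $0$-ary conditions gives four values of $M$; taking their maximum yields one $M$ that works uniformly across all four statements.

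No real obstacle arises: the argument is entirely a bookkeeping application of the locus formalism, with all analytic content already absorbed into the cited theorems, and the formulation of Theorem~\ref{thm.trans} has been engineered precisely to accommodate the quantification over $\psi$.
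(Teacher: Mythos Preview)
Your approach is correct and matches the paper's own (implicit) proof, which simply says ``As usual (namely, by transfer for $\cCexp$-conditions) the above theorems imply corresponding transfer principles for limits.'' You have correctly identified the relevant inputs (Theorem~\ref{thm:limits0:gen}, Theorem~\ref{limits:gen}, Corollary~\ref{cor.cont}) and the closing step via Proposition~\ref{locbasicop}(3) and Theorem~\ref{thm.trans}.

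One small oversight: for the fourth statement, quantifying only over $y \in \bar Y \setminus Y$ using the second part of Corollary~\ref{cor.cont} is not quite sufficient. A function $H(x,\cdot)$ on $Y_F$ has a continuous extension to $\bar Y_F$ if and only if it is continuous on $Y_F$ \emph{and} the limit exists at every point of $\bar Y_F \setminus Y_F$. So the fourth statement should be expressed as the conjunction of the third statement's condition with $\forall y \in \bar Y \setminus Y$ applied to the second part of Corollary~\ref{cor.cont}. This is immediate from Proposition~\ref{locbasicop}(2) and does not affect the structure of your argument.
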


The proofs of Theorems \ref{thm:limits0:gen} and \ref{limits} use Theorem \ref{thm:fam:gen} in a crucial way, to reduce to the case
of a limit over a single $\VG$-variable; other proofs, relying on Proposition \ref{IML} above and the rectilinearization result given by Proposition~5.2.6 of \cite{CHallp}, may also be thinkable. Here is the single $\VG$-variable version of Theorems \ref{thm:limits0:gen} and \ref{limits} (formulated in a more concise way):

\begin{thm}[Limits]\label{limits:basic}
Let $H$ be in $\cCexp(X \times \VG_{\ge0})$, where $X$ is a definable set. Then we have the following.
\begin{enumerate}
 \item The condition ``$\lim_{\lambda \to +\infty} H(x,\lambda)$ exists'' is a $\cCexp$-condition on $x$.
 \item There exists a function $G \in \cCexp(X)$, a definable function $\alpha\colon X \to \VG_{\ge0}$ and a rational number $r < 0$
    such that for every $F \in \Loc$, every $\psi \in \cD_F$ and every $x \in X_F$, if the limit $\lim_{\lambda \to +\infty} H_{F,\psi}(x,\lambda)$ exists, then
\begin{equation}\label{upper-basic}
|H_{F,\psi}(x,\lambda) - G_{F,\psi}(x)|_\C  \le q_F^{ r \cdot\lambda }
\end{equation}
for every $\lambda \ge \alpha_F(x)$. In particular, the limit is equal to $G_{F,\psi}(x)$.
\end{enumerate}
\end{thm}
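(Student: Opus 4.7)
The plan is to apply the reparametrization of Proposition~\ref{repar} to $H$ on $U := X \times \VG_{\ge 0}$ and then analyze the resulting polynomial--exponential expression cell by cell. This yields a definable bijection $\sigma:U\to U_{\rm par}$ over $X\times\VG$ and a partition of $U_{\rm par}$ into Presburger cells $A_j$ on which
\[
H_{\rm par}(z,x,\mu) = \sum_i c_{ij}(z,x)\,\mu^{a_{ij}}\,q^{b_{ij}\mu},
\]
with distinct pairs $(a_{ij},b_{ij})\in\NN\times\QQ$ and $c_{ij}\in\cCexp(B_j)$. Cells with finite upper bound $\beta_j$ are absorbed into a definable threshold $\alpha:X\to\VG_{\ge 0}$ via Proposition~\ref{repar}(2), since their preimages under $\sigma$ are $\lambda$-bounded by $\beta_j^0(x)+\ord(N)$; so for $\lambda\ge\alpha(x)$ only unbounded cells contribute.

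Composing with $\sigma$ and collecting contributions across unbounded cells by exponent pair, I rewrite $H$ for $\lambda\ge\alpha(x)$ as $H(x,\lambda) = \sum_k d_k(x,\lambda)\,\lambda^{a_k}\,q^{b_k\lambda}$, where the sum runs over a finite set of distinct pairs $(a_k,b_k)$ and each $d_k\in\cCexp(U)$ is obtained by summing the $c_{ij}(z(x,\lambda),x)$ over the $(i,j)$ with $(a_{ij},b_{ij})=(a_k,b_k)$, supported on the corresponding cell preimages. Classify the indices $k$ as \emph{growing} (when $b_k>0$, or $b_k=0$ and $a_k>0$), \emph{constant} ($a_k=b_k=0$; at most one such $k^*$), or \emph{decaying} ($b_k<0$). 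I claim that $\lim_{\lambda\to\infty}H(x,\lambda)$ exists if and only if (a) $d_k(x,\lambda)=0$ for every growing $k$ and every sufficiently large $\lambda$, and (b) $d_{k^*}(x,\lambda)$ is eventually constant in $\lambda$. Both (a) and (b) are $\cCexp$-conditions on $x$ by Corollary~\ref{cor.const} combined with Proposition~\ref{for:all:large:mu}, proving part~(1). Applying the ``moreover'' clause of Proposition~\ref{for:all:large:mu} to (b) provides a definable $\lambda^*:X\to\VG$ past which $d_{k^*}(x,\cdot)$ is constant whenever (b) holds; set $G(x):=d_{k^*}(x,\lambda^*(x))\in\cCexp(X)$. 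When (a) and (b) hold, this $G(x)$ equals the limit, since the growing part vanishes for large $\lambda$ and the decaying part tends to $0$.

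For part~(2), pick any rational $r<0$ strictly larger than every negative $b_k$ appearing (for instance the midpoint between that maximum and $0$). When the limit exists, for $\lambda\ge\alpha(x)$ one has $H(x,\lambda)-G(x)=\sum_{k\ \text{decaying}}d_k(x,\lambda)\,\lambda^{a_k}\,q^{b_k\lambda}$. Each $d_k(x,\lambda)$ is bounded in absolute value by $q^{\beta(x)}$ for some definable $\beta:X\to\VG$ via Theorem~\ref{thm:fam}, using that for fixed $x$ the image of $\lambda\mapsto z(x,\lambda)$ lies in the finite set $\RFss_F$ so $d_k(x,\cdot)$ takes only finitely many values. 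Enlarging $\alpha(x)$ definably so as to absorb $q^{\beta(x)}$, the finitely many polynomial factors $\lambda^{a_k}$, and the gap $r-\max_{k\text{ decaying}} b_k$ then yields $|H(x,\lambda)-G(x)|_\C\le q^{r\lambda}$ for all $\lambda\ge\alpha(x)$.

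The main technical obstacle is justifying the limit criterion ``(a)~$\wedge$~(b)'': since the $d_k(x,\lambda)$ themselves depend on $\lambda$, one cannot directly invoke the classical argument that growing terms must have vanishing coefficients. The resolution is that $\lambda\mapsto z(x,\lambda)$ takes values in the finite set $\RFss_F$, which partitions $\VG_{\ge 0}$ into finitely many Presburger pieces on each of which the $d_k(x,\cdot)$ are literally constant in $\lambda$; on each infinite piece one applies the pure polynomial--exponential analysis (relying on linear independence of the functions $\mu\mapsto\mu^{a}q^{b\mu}$ for distinct $(a,b)$, which is also what underlies Lemmas~\ref{lem:Z-ubd}--\ref{lem:Z-bd}), and Proposition~\ref{for:all:large:mu} promotes the resulting family of eventual conditions across all pieces to a single $\cCexp$-condition on $x$.
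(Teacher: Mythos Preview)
Your proof is correct and takes a genuinely different route from the paper's. The paper makes a structural observation that simplifies matters considerably: after discarding the bounded cells, on each remaining unbounded cell $A_j$ the reparametrization $\sigma$ is necessarily of the form $(x,\mu)\mapsto(\tau_j(x),\mu)$ for some definable bijection $\tau_j\colon B'_j\to B_j$, because any two fibers $(A_j)_{\xi,x}$ and $(A_j)_{\xi',x}$ both contain all large $\mu\equiv e_j\bmod n_j$ and hence cannot be disjoint (while $\sigma$ being a bijection forces them to be). This eliminates the $\RFss$-dependence entirely, so the coefficients $c_{ij}$ become honest $\cCexp$-functions of $x$ alone, and the limit criterion reduces to a finite conjunction of equalities $c_{ij}(x)=0$ (for growing $(i,j)$) and $c_{0j}(x)=c_{0k}(x)$ (across cells); no appeal to Proposition~\ref{for:all:large:mu} is needed for part~(1), and $G(x)$ is simply $c_{0j}(x)$.

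Your approach instead keeps the $\lambda$-dependent coefficients $d_k(x,\lambda)$ and handles them via the eventual-behaviour machinery. This works because, for each fixed $F,\psi,x$, the map $\lambda\mapsto\xi(x,\lambda)$ has finite image in $\RFss_F$, so each $d_k(x,\cdot)$ is piecewise constant on a Presburger partition and the classical linear-independence argument applies piecewise; the monotonicity in $\mu_0$ of ``$d_{k^*}(x,\cdot)$ is constant past $\mu_0$'' then lets Proposition~\ref{for:all:large:mu} upgrade this to a $\cCexp$-condition. Your route leans more heavily on Proposition~\ref{for:all:large:mu} and Theorem~\ref{thm:fam}, while the paper's is more self-contained once the structural fact about unbounded cells is observed; both are valid, and the paper's is a bit shorter and yields the slightly sharper description that the reparametrization was in fact unnecessary on unbounded cells.
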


This theorem implies Theorems~\ref{thm:limits0} -- \ref{limits:gen}  as follows.

\begin{proof}[Proof of Theorems \ref{thm:limits0} and \ref{thm:limits0:gen}]
Theorem \ref{thm:limits0} follows from \ref{thm:limits0:gen} (using $Y = \VG^n$ and $\gamma(\lambda) = \|\lambda\|$), so we now prove
Theorems \ref{thm:limits0:gen}.

We may suppose that $Y=Y_0\times \VG_{\ge0}$ and that $\gamma$ is the coordinate projection to the $\VG$-variable.
By Theorem~\ref{thm:fam:gen}, there exists a real-valued function $H'$ in $\cCexp(X \times \VG_{\ge0})$
such that $H'(x,\lambda)$ differs from $\sup_{y_0\in Y_0} |H(x,y_0,\lambda)|^2_\C$ by at most a factor
of the form $q^{d}$, for some definable $d \in \VG$.
This implies that we can without loss replace $H$ by $H'$, considering the limit $\lim_{\lambda \to +\infty} H'(x,\lambda)$;
in this version, the Theorem follows pretty directly from Theorem~\ref{limits:basic}.
Indeed, that the limit is $0$ can be expressed by the condition ``$\lim_{\lambda \to +\infty} H'(x,\lambda)$ exists
and $G(x) = 0$'', which is of $\cCexp$-class, where $G$ is the function obtained by applying Theorem~\ref{limits:basic} to $H'$,
and Claim~(\ref{limits:upper}) follows from (\ref{upper-basic}).
\end{proof}

\begin{proof}[Proof of Theorems \ref{limits} and \ref{limits:gen}]
Define $H'$ in $\cCexp(X \times \VG_{\ge0} \times Y \times Y)$ by
\[
H'(x, \lambda, y_1, y_2) =
\begin{cases}
 H(x, y_1) - H(x, y_2) & \text{if } \gamma(y_1), \gamma(y_2) \ge \lambda\\
 0  & \text{otherwise}.
\end{cases}
\]
By applying Theorem~\ref{thm:fam:gen} to $H'$, we find $G'$ in $\cCexp(X \times \VG_{\ge0})$
such that \change{%for all $F, \psi, x, \lambda$,
}
this $G'(x,\lambda)$ differs from $\sup_{y_1, y_2} |H(x, y_1) - H(x, y_2)|_\C^2$ by at most a factor
not depending on $\lambda$, where $y_1$ and $y_2$ both run over $\{y \in Y \mid \gamma(y) \ge \lambda\}$.
In particular we have that $\lim_{\lambda \to +\infty}G'(x,\lambda) = 0$ iff $H(x, \cdot)$
is a Cauchy sequence in the sense of the $\gamma$-limit. Thus the limit $\lim\limits_{y\in Y,\ \gamma(y)\to +\infty} H (x,y)$
exists if and only if $\lim_{\lambda \to +\infty}G'(x,\lambda) = 0$, which is a $\cCexp$-condition by Theorem~\ref{limits:basic}.
This finishes the proof of Theorem~\ref{limits:gen} and of Theorem~\ref{limits} (1).

To obtain a function $G$ as desired in Theorem~\ref{limits} (2),
we can simply take the limit of $H(x, \lambda)$ along any sequence of $\lambda$ with $\|\lambda\| \to \infty$,
so we obtain it e.g.\ by applying Theorem~\ref{limits:basic} (2) to
$$
\lim_{\mu \to +\infty} H(x, \mu, 0, \dots, 0),
$$
where $\mu$ runs over $\VG_{\ge 0}$.
\end{proof}

\begin{proof}[Proof of Theorem~\ref{limits:basic}]
We apply Proposition~\ref{repar} to $H$, yielding (using the same notation as there)
a reparameterization $\sigma\colon U = X \times \VG_{\ge0} \to U_{\rm par} \subset \RFss \times X \times \VG_{\ge0}$
and a partition of $U_{\rm par}$ into sets
\begin{equation}\label{lb.A}
A_j = \{(z,\mu) \in B_j \times \VG \mid \alpha_{j}(z) \leq \mu \le  \beta_{j}(z) \wedge \mu \equiv e_j\bmod n_j \}
\end{equation}
(for some definable $B_j \subset \RFss \times X$, $\alpha_j, \beta_j\colon B_j \to \VG \cup \{\pm \infty\}$ and some integers $e_j \ge 0, n_j > 0$) such that for $(z, \mu) \in A_j$, we have
\begin{equation}\label{lb.H}
H(\sigma^{-1}(z,\mu)) =
H_{{\rm par}} (z,\mu) = \sum_{i} c_{ij}(z) \mu^{a_{ij}} q^{b_{ij}\mu}
\end{equation}
for some $a_{ij} \in \NN$, $b_{ij} \in \QQ$ and some $c_{ij} \in \cCexp(B_{ij})$.

Denote the preimage of $A_j$ under $\sigma$ by
\[
A'_j := \sigma^{-1}(A_j) \subset U.
\]
For any fixed $F$ and $x \in X_F$, the fibers $A'_{j,F,x} = \{\lambda \in \NN \mid (x,\lambda) \in A'_{j,F}\}$ form a partition of $\NN$.
The limit $\lim_{\lambda \to +\infty}H_{F,\psi}(x,\lambda)$ exists if and only if, for each $j$ for which $A'_{j,F,x}$ is unbounded,
the corresponding restricted limit $\lim_{\lambda \to +\infty, \lambda \in A'_{j,F,x}}H_{F,\psi}(x,\lambda)$ exists and moreover all those limits are equal.
In particular, those $A_j$ for which $\beta_j \ne +\infty$ are irrelevant. Indeed, recall that for each $j$, either $\beta_j$ is constant $+\infty$, or, by
Proposition~\ref{repar}~(2), $A'_{j,F,x}$ is bounded for all $F,x$.
Therefore, in the remainder of the proof, we only consider those parts $A_j$ (and $A'_j$) for which $\beta_j = +\infty$.

After shrinking $U_{\rm par}$ to the union of those parts (and shrinking $U$ accordingly and restricting $H$ to $U$),
we can entirely get rid of the reparameterization, by replacing each $A_j$ by its preimage $A'_j$.
To see this, what we need to check is that we have analogues of (\ref{lb.A}) and (\ref{lb.H}) for those $A'_j$.
Those analogues can be obtained, provided that the restriction of $\sigma$ to $A'_j$
is of the form $\sigma(x, \mu) = (\tau(x), \mu)$ for some bijection $\tau \colon B'_j \subset X \to B_j$,
where $B'_j$ is the projection of $A'_j$ to $X$. Indeed, such a $\tau$ is automatically definable (using $\sigma$)
and hence (\ref{lb.A}) and (\ref{lb.H}) for $A'_j$ can be obtained by pre-composing $\alpha_j$ and $c_{ij}$ with $\tau$.

Since $\sigma$ is a reparameterization,
the fiber $(A'_j)_x = \{\mu \in \VG \mid (x, \mu) \in A'_j\}$ is equal
to a disjoint union of certain fibers $(A_j)_{\xi,x} = \{\mu \in \VG \mid (\xi,x,\mu) \in A_j\}$,
and what we need to check is that $(A'_j)_x$ is actually equal to a single fiber $(A_j)_{\tau(x)}$.
This follows from the fact that different $(A_j)_{\xi,x}$, $(A_j)_{\xi',x}$ cannot be disjoint, by
(\ref{lb.A}): both fibers contain all big $\mu \in \VG$ satisfying $\mu \equiv e_j \bmod n_j$.
Thus the reparameterization is indeed unnecessary, so from
now on we assume that
\[
A_j = \{(x,\mu) \in B_j \times \VG \mid \alpha_{j}(x) \leq \mu \wedge \mu \equiv e_j\bmod n_j \}
\]
for some $B_j \subset X$ and that, for $(x,\mu) \in A_j$, we have
\[
H(x,\mu) =  \sum_{i} c_{ij}(x) \mu^{a_{ij}} q^{b_{ij}\mu}.
 \]

We assume without loss that $a_{0j} = b_{0j} = 0$, and we
let $I$ be the collection of those $(i,j)$ satisfying either $b_{ij}>0$, or, $b_{ij}=0$ and $a_{ij}>0$.

Since for each fixed $j$ the pairs $(a_{ij},b_{ij})$ are distinct, given $x \in X$,
the limit exists if and only if, jointly,
\begin{equation}\label{limex1}
c_{ij}(x)=0 \mbox{ for all those $(i,j) \in I$ satisfying } x\in B_{j},
\end{equation}
and
\begin{equation}\label{limex2}
c_{0j}(x) = c_{0k}(x)
\text{ for all those $j$, $k$ satisfying $x\in B_{j}$, $x\in B_{k}$.}
\end{equation}

Each of these equations is a $\cCexp$-condition on $x$, so their conjunction is also a $\cCexp$-condition (by Proposition~\ref{locbasicop}), hence finishing the proof of (1).

When the limit exists, it is equal to the following $\cCexp$-function:
$$
G(x) := c_{0j}(x)
$$
for any $j$ with $x\in B_{j}$ (e.g.\ the smallest $j$ for some linear ordering of the $j$).

It remains to find $r < 0$ and $\alpha\colon X \to \VG$. To this end, we may from now on without loss suppose that
all $b_{ij}$ are negative (by subtracting $G(x)$, and by ignoring those $x$ for which the limit does not exist).
In particular, the limit is $0$ if it exists.

We need to choose $r$ and $\alpha$ in such a way that we have
\begin{equation}\label{r-alpha}
|\sum_{i} c_{ij}(x) \lambda^{a_{ij}} q^{b_{ij}\lambda}|_\CC \le q^{r\lambda}
\quad \text{for all $\lambda \ge \alpha(x)$.}
\end{equation}

Let $a_0$ and $b_0$ be the maximum of all $a_{ij}$ and of all $b_{ij}$, respectively, and set $r := b_0/2$. Then to obtain (\ref{r-alpha}), it
suffices to prove
\[
N \cdot (\max_{ij} |c_{ij,F,\psi}(x)|_\C)\cdot \lambda^{a_0} \le q_F^{-r\lambda}
\]
for all $F, \psi, x$, where $N$ is the number of summands in (\ref{r-alpha}).

By the same argument as in the proof of Theorem~\ref{thm:fam} (or by actually applying Theorem~\ref{thm:fam} with $W$ being a singleton),
we obtain a bound of the form $|c_{ij,F,\psi}(x)|_{\C} \le q_F^{g_F(x)}$ for some definable $g\colon X \to \VG$.
From this, one can easily obtain an $\alpha$ as desired (e.g. choosing it such that $g_F(x) \le -r\alpha_F(x)/2$
and $N\lambda^{a_0} \le q_F^{-r\lambda/2}$ for $\lambda \ge \alpha_F(x)$).
\end{proof}

\subsection{Limits of functions}

In this section, we consider various types of limits of sequences of functions. For simplicity we restrict to functions
on $\VF^n$ (though the notions would also make sense on other definable sets, using the counting measures on $\VG$ and on $\RF_*$,
and the proofs also go through in this generalized setting). We ask the usual two questions: Does the limit exist and if yes, what is it?
Those results will be used to study Fourier transforms of $L^2$ functions in Section \ref{sec:Fourier}.

By uniform convergence of a sequence of functions $f_\mu$, $\mu  \in \NN$, we mean convergence with respect to the sup norm, and
by $L^p$-convergence (for $p \in \RR_{\ge1} \cup \{\infty\}$), we mean convergence with respect to the usual $L^p$-norm.
(In particular, $L^{\infty}$-convergence means uniform convergence except on a subset of measure zero.) Note that we do not require
the individual functions $f_\mu$ to be $L^p$-integrable; instead, $L^p$-convergence towards a limit function $g$ means that
the $L^p$-norm of the difference $f_\mu - g$ exists for $\mu$ sufficiently big and that it goes to $0$.

\begin{thm}[Limits of sequences of functions]\label{L^pcom}
Let $H$ be in $\cCexp(X\times \VF^n\times \VG_{\ge0})$ for some definable set $X$ and some $n \ge 0$.
Given $F\in \Loc_{ \gg1}$, $\psi \in \cD_F$ and $x\in X_F$, consider the sequence of functions (on $F^n$)
\begin{equation}\label{eq.seq}
y \mapsto H_{F,\psi}(x, y, \mu),
\end{equation}
indexed by integers $\mu\geq 0$.
\begin{enumerate}
 \item
 For each of the notions of convergence listed below, the set of $x \in X$ such that the sequence (\ref{eq.seq}) of functions converges in that sense is a $\cCexp$-locus.
 The notions of convergence are: pointwise convergence, uniform convergence, $L^2$-convergence, $L^\infty$-convergence.
 \item
There exists a $G$ in $\cCexp(X\times \VF^n)$ such that the following holds for all $F\in \Loc_{ \gg1}$, for each $\psi \in \cD_F$ and each $x\in X_F$.

If the sequence (\ref{eq.seq}) of functions converges pointwise or uniformly or in $L^p$-norm for any $p \in \RR_{\ge 1} \cup \{\infty\}$,
then the function
$$
y\mapsto G_{F,\psi}(x,y)
$$
is the limit of this sequence.
\end{enumerate}
\end{thm}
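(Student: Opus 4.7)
The plan is to first produce a single candidate limit function $G\in\cCexp(X\times\VF^n)$ that serves as the limit in all four senses at once, and then verify that each of the four convergence loci in (1) can be written as a $\cCexp$-condition built from limits, eventual quantifiers, and approximate suprema of suitable auxiliary $\cCexp$-functions.

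For part (2), I would apply Theorem~\ref{limits:basic} to $H$ viewed as a $\cCexp$-function on the composite base $(X\times\VF^n)\times\VG_{\ge 0}$. This yields $G\in\cCexp(X\times\VF^n)$ such that $G_{F,\psi}(x,y)=\lim_{\mu\to+\infty}H_{F,\psi}(x,y,\mu)$ whenever this pointwise limit exists. Tautologically $G$ is then the pointwise limit, and hence also the uniform limit (since uniform convergence implies pointwise convergence at every $y$). For $L^p$-convergence with $p\in[1,\infty)$ or $L^\infty$-convergence, I would invoke Lemma~\ref{Lp-vs-pointwise} to deduce pointwise convergence almost everywhere to the $L^p$-limit; hence $G(x,\cdot)$ agrees with this limit a.e.\ and therefore represents it in $L^p$.

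For part (1), the pointwise locus is immediate: ``$\forall y\in\VF^n\colon\lim_\mu H(x,y,\mu)$ exists'' is $\cCexp$ by Theorem~\ref{limits:basic}(1) combined with the $\forall$-quantifier of Proposition~\ref{locbasicop}. For uniform convergence, introduce $H'(x,y,\mu_1,\mu_2):=H(x,y,\mu_1)-H(x,y,\mu_2)$ and apply Theorem~\ref{thm:fam:gen} (with $W=\VF^n$) to $|H'|^2$, obtaining $G'\in\cCexp(X\times\VG^2)$ with $\sup_y|H'(x,y,\mu_1,\mu_2)|_\CC^2\le G'(x,\mu_1,\mu_2)\le q^d\sup_y|H'|^2$ whenever the supremum is finite. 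The uniform-Cauchy condition then splits into two $\cCexp$-clauses: eventually (in $\mu_0$, for all $\mu_1,\mu_2\ge\mu_0$) the function $H'(x,\cdot,\mu_1,\mu_2)$ is bounded on $\VF^n$, and $G'(x,\mu_1,\mu_2)\to 0$ as $\min(\mu_1,\mu_2)\to+\infty$. The first clause is $\cCexp$ by Theorem~\ref{thm:fam}(1) and Proposition~\ref{for:all:large:mu}; the second by Theorem~\ref{thm:limits0:gen} with $\gamma(\mu_1,\mu_2)=\min(\mu_1,\mu_2)$. For $L^2$-convergence, I would instead use Theorem~\ref{thm:mot.int.} to turn $\int_{\VF^n}|H'(x,y,\mu_1,\mu_2)|^2_\CC\,|dy|$ into a $\cCexp$-function $I(x,\mu_1,\mu_2)$ with $\cCexp$ domain of existence, then impose eventual integrability together with $I\to 0$. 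For $L^\infty$-convergence, I would express ``$\|H'(x,\cdot,\mu_1,\mu_2)\|_\infty\le q^{-\lambda}$'' as the almost-everywhere condition ``for a.e.\ $y\in\VF^n$, $|H'(x,y,\mu_1,\mu_2)|_\CC\le q^{-\lambda}$'' via Corollary~\ref{cor.almost}, and then quantify $\forall\lambda\,\exists\mu_0\,\forall\mu_1,\mu_2\ge\mu_0$.

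The main obstacle I expect is that $\sup_y$ over $\VF^n$ does not preserve the $\cCexp$-class, so the uniform-Cauchy condition cannot be written down directly. Theorem~\ref{thm:fam:gen} resolves this by providing a two-sided $q^d$-approximation of $\sup_y|H'|^2$ that is tight enough to detect ``tends to zero'' but would be useless for describing the exact supremum. Subordinate subtleties are that $L^\infty$ uses essential rather than pointwise sup (handled by the almost-everywhere quantifier of Corollary~\ref{cor.almost}) and that the candidate $G$ from part (2) only coincides with the $L^p$-limit almost everywhere (handled by Lemma~\ref{Lp-vs-pointwise}).
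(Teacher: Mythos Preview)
Your approach to part~(2) and to the pointwise, uniform, and $L^2$ cases of part~(1) is essentially correct, though the paper takes a shorter route: it first subtracts the candidate limit $G$ from $H$, so that convergence is always convergence to $0$, and then treats uniform convergence directly by Theorem~\ref{thm:limits0:gen} (with $\gamma(y,\lambda)=\lambda$) and $L^2$-convergence by applying Theorem~\ref{limits:basic} to $(x,\lambda)\mapsto\int|H(x,y,\lambda)|^2\,|dy|$. Your Cauchy-criterion versions via $H'(x,y,\mu_1,\mu_2)$ work but add an unnecessary layer.

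There is, however, a genuine gap in your treatment of the $L^\infty$ case. You invoke Corollary~\ref{cor.almost} to encode ``for a.e.\ $y$, $|H'(x,y,\mu_1,\mu_2)|_\CC\le q^{-\lambda}$'' as a $\cCexp$-condition, but Corollary~\ref{cor.almost} is itself \emph{deduced from} Theorem~\ref{L^pcom} (specifically from the $L^\infty$ case of part~(1)). So this is circular. There is also a second problem: even granting an almost-everywhere quantifier, the inner condition $|H'|_\CC\le q^{-\lambda}$ is a sublevel set of an absolute value, not a zero locus, and there is no reason for it to be a $\cCexp$-condition (the class is closed under $\wedge$, $\vee$, $\forall$, but not under inequalities between $\cCexp$-functions or under negation).

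The paper's argument for $L^\infty$ avoids both issues by a different mechanism: it uses the fact (from \cite[Theorem~4.4.3]{CHallp}) that any $\cCexp$-function is locally constant outside a definable set whose fibers have measure zero. Redefining $H$ to be $0$ on that exceptional set produces a function with the same $L^\infty$-behavior but for which $L^\infty$-convergence coincides with uniform convergence, thereby reducing to the uniform case already handled.
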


We believe that also part (1) of this theorem is true for $L^p$-convergence for any $p \in \RR_{\ge 1} \cup \{\infty\}$. However, the proof would be more involved (even in the case $p = 1$).

From the theorem, we deduce various related results (some of which would also have simple direct proofs).

\begin{cor}[$L^p$-integrability]\label{cor.finLp}
Fix $p \in \{1, 2, \infty\}$ and let $H$ be in $\cCexp(X\times \VF^n)$. Then the set of $x \in X$ for which
$H(x,\cdot)$ is $L^p$-integrable is a $\cCexp$-locus.
\end{cor}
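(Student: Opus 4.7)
The plan is to treat the three cases $p\in\{1,2,\infty\}$ separately, reducing the first two to Theorem~\ref{thm:mot.int.} and handling the $L^\infty$ case via a reduction to genuine boundedness.

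For $p=1$ the statement is nothing but Theorem~\ref{thm:mot.int.}(1) applied to $H$. For $p=2$, my first step is to verify that $|H|^2 = H\cdot\overline H$ is itself in $\cCexp(X\times\VF^n)$. Writing $H$ as in Definition~\ref{expfun},
\[
H_{F,\psi}(x,y) = \sum_i H_{iF}(x,y)\sum_{\xi\in Y_{i,(x,y)}}\psi\bigl(g_i(x,y,\xi)+e_i(x,y,\xi)/n_i\bigr),
\]
the coefficients $H_i\in\cC$ are real-valued and $\overline{\psi(z)}=\psi(-z)$, so $\overline H$ has the same form after replacing $g_i,e_i$ by their negatives; since $\cCexp(X\times\VF^n)$ is a ring, $|H|^2\in\cCexp$. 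As $L^2$-integrability of $H(x,\cdot)$ is $L^1$-integrability of $|H(x,\cdot)|^2$, Theorem~\ref{thm:mot.int.}(1) applied to $|H|^2$ yields the desired $\cCexp$-locus.

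For $p=\infty$ (the main obstacle) essential boundedness is not the finiteness of an obvious integral of a $\cCexp$-function, and real inequalities such as $|H|^2\le q^{2M}$ are not directly $\cCexp$-conditions. My plan is to apply Corollary~\ref{from.exp.to.e.IML} with $w$-variable $y\in\VF^n$ and parameter $x$, producing a definable surjection $\varphi\colon\VF^n\times X\to V\subset X\times\RFss\times\VG^*$ over $X$ and a non-negative real-valued $\tilde H\in\cCexp(V)$ with
\[
\tfrac1N\,\tilde H(x,v)\;\le\;\sup_{\varphi(y,x)=v}|H(x,y)|^2\;\le\;N\,\tilde H(x,v)
\]
on each fiber $U_{x,v}$. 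Proposition~\ref{IML}(2) (on which Corollary~\ref{from.exp.to.e.IML} rests) guarantees moreover that on every positive-measure fiber this supremum is attained, up to $N$, on a subset of proportion at least $q^{-d}$; and since $V_x\subset\RFss\times\VG^*$ is countable, the union of the measure-zero fibers has measure zero in $\VF^n$. Combining these facts gives
\[
H(x,\cdot)\text{ is essentially bounded}\;\iff\;v\mapsto\tilde H(x,v)\text{ is bounded on }V^+_x,
\]
where $V^+_x:=\{v\in V_x:\mu(U_{x,v})>0\}$.

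The last step, which I expect to be the hardest, is to recognise boundedness of $\tilde H$ on $V^+_x$ as a $\cCexp$-condition on $x$. The fibre measure $J(x,v):=\mu(U_{x,v})$ is itself a $\cCexp$-function on $V$ by Theorem~\ref{thm:mot.int.}(2), so its zero locus $\{J=0\}$ is a $\cCexp$-locus — but its complement is not, in general. I would circumvent this by refining the partition of $V$ underlying $\varphi$ via Presburger cell decomposition, grouping the fibres $U_{x,v}$ by their dimension as definable subsets of $\VF^n$. Positivity of $J(x,v)$ is then equivalent to top-dimensionality of $U_{x,v}$, a genuinely \emph{definable} condition on $v$, so that $V^+$ is cut out by a definable subset $V'\subset V$. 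Theorem~\ref{thm:fam}(1) applied to $\tilde H$ restricted to $V'$ then produces the $\cCexp$-locus of those $x$ for which $H(x,\cdot)$ is $L^\infty$-integrable. Making this dimension-aware refinement work uniformly in $F$ — so that ``positive measure'' gets picked out definably rather than merely $\cCexp$-ly — is the delicate technical point of the argument.
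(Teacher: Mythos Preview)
Your arguments for $p=1$ and $p=2$ are correct. The $p=2$ case via $|H|^2=H\bar H\in\cCexp$ and Theorem~\ref{thm:mot.int.}(1) is more direct than the paper's route, which treats $p=2$ and $p=\infty$ uniformly by a different trick (see below); your version has the advantage of being entirely self-contained at this point.

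For $p=\infty$ your strategy can be made to work, but it is considerably more laborious than what the paper does, and the point you flag as ``delicate'' is in fact standard: in the Denef--Pas setting, a definable subset of $\VF^n$ has positive measure iff it has dimension $n$, iff it contains an open ball, and this is a first-order (hence definable) condition on the family parameter. So $V^+$ is definable, not merely a $\cCexp$-locus, and Theorem~\ref{thm:fam}(1) then applies to $\tilde H|_{V^+}$. (Your mention of ``Presburger cell decomposition'' is a slip --- the fibres live in $\VF^n$, so it is $p$-adic cell decomposition or the dimension theory of definable sets that is relevant.) A slightly cleaner variant of the same idea, closer to what the paper does elsewhere, is to invoke the fact that a $\cCexp$-function is locally constant on a definable open set whose complement has measure zero; restricting $H$ to that set turns essential boundedness into genuine boundedness, and Theorem~\ref{thm:fam}(1) finishes.

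The paper's own proof sidesteps all of this. For $p\in\{2,\infty\}$ it sets $H'(x,y,\mu):=q^{-\mu}H(x,y)$ and observes that the $L^p$-limit $\lim_{\mu\to\infty}H'_{F,\psi}(x,\cdot,\mu)$ exists (and equals $0$) iff $H_{F,\psi}(x,\cdot)$ has finite $L^p$-norm; Theorem~\ref{L^pcom}(1) then gives the $\cCexp$-locus in one stroke. This is shorter and avoids the detour through Corollary~\ref{from.exp.to.e.IML}, at the cost of relying on the heavier Theorem~\ref{L^pcom}. Your $p=2$ argument, by contrast, is lighter-weight; your $p=\infty$ argument ends up roughly comparable in depth to what goes into Theorem~\ref{L^pcom} for the $L^\infty$ case anyway.
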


\begin{proof}
The case $p = 1$ is exactly Theorem~\ref{thm:mot.int.} (1). (We repeat the result for $p=1$ here only for completeness.)
For $p = 2, \infty$, we obtain the corollary by applying Theorem~\ref{L^pcom} to $H'(x,y,\mu) := q^{-\mu}H(x,y)$. Indeed, for any fixed $F, \psi, x$,
the $L^p$-limit $\lim_{\mu \to \infty} H'_{F,\psi}(x, \cdot, \mu)$ exists (and is $0$) if and only if $H_{F,\psi}(x,\cdot)$ is $L^p$-integrable.
\end{proof}

\begin{cor}[The ``almost everywhere'' quantifier]\label{cor.almost}
Suppose that $P(x, y)$ is a $\cCexp$-condition, where $x$ runs over a definable set $X$ and $y$ runs over $\VF^n$.
Then
\[
P(x,y) \text{ holds for almost all } y \in \VF^n
\]
is a $\cCexp$-condition on $x$. (Here, by ``almost all'', we mean that the complement has measure $0$.)
\end{cor}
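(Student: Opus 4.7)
The plan is to reduce ``$P(x,y)$ holds for almost all $y$'' to a universally quantified integral-vanishing condition, and then to apply Theorem~\ref{thm:mot.int.} together with Proposition~\ref{locbasicop}. Fix $f\in\cCexp(X\times\VF^n)$ witnessing $P$, so that $P(x,y)\iff f(x,y)=0$.

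My first step is to replace $f$ by $|f|^2$. The key observation is that the complex conjugate $\bar f$ is still of $\cCexp$-class: writing $f$ in the standard form of Definition~\ref{expfun}, conjugation turns each factor $\psi(g_i+e_i/n_i)$ into $\psi(-g_i-e_i/n_i)$, and $-g_i,-e_i$ are again definable into $\VF$ and $\RF_{n_i}$. Hence $g := f\bar f = |f|^2$ lies in $\cCexp(X\times\VF^n)$, is non-negative real-valued, and still satisfies $g(x,y)=0 \iff P(x,y)$.

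Next, for each $\mu\in\VG$ let $B_\mu\subset\VF^n$ be the definable closed ball $\{y\in\VF^n:\ord(y_i)\ge -\mu\text{ for all }i\}$. By Theorem~\ref{thm:mot.int.}, the condition $A(x,\mu)$ asserting that $g(x,\cdot)$ is $L^1$-integrable on $B_\mu$ is a $\cCexp$-condition on $(x,\mu)$, and there is $I\in\cCexp(X\times\VG)$ with $I(x,\mu) = \int_{B_\mu}g(x,y)\,|dy|$ whenever $A(x,\mu)$ holds. The $\cCexp$-condition I then propose for ``$P(x,\cdot)$ holds almost everywhere'' is
\[
Q(x) \coliff \forall\mu\in\VG\colon A(x,\mu)\,\wedge\,I(x,\mu)=0,
\]
which belongs to $\cCexp$ by Proposition~\ref{locbasicop}. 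The equivalence with the a.e.\ statement is immediate from non-negativity of $g$ and from $\VF^n=\bigcup_\mu B_\mu$: if $g(x,\cdot)=0$ a.e.\ then every such integral exists and equals zero, and conversely, a non-negative integrable function with vanishing integral over $B_\mu$ must vanish a.e.\ on $B_\mu$.

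The only delicate point is that $\cCexp$ is not visibly closed under pointwise absolute value; routing $|f|$ through $f\bar f$ sidesteps this. Beyond that, the argument is just a packaging of Theorem~\ref{thm:mot.int.} and Proposition~\ref{locbasicop}, so no serious obstacle is expected.
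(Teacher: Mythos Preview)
Your proof is correct and takes a genuinely different, more elementary route than the paper's own argument. The paper proves this corollary by applying Theorem~\ref{L^pcom}: it sets $H'(x,y,\mu) := q^{\mu}H(x,y)$ and observes that $P(x,\cdot)$ holds almost everywhere iff the sequence $H'(x,\cdot,\mu)$ converges in $L^\infty$-norm, which is a $\cCexp$-condition by the $L^\infty$-part of Theorem~\ref{L^pcom}. That theorem in turn relies on the almost-everywhere local constancy result \cite[Theorem~4.4.3]{CHallp} and on the uniform-convergence machinery.

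Your argument bypasses all of this: the $|f|^2 = f\bar f$ trick (which the paper itself uses in the $L^2$-part of the proof of Theorem~\ref{L^pcom}) reduces the question to a vanishing-of-integrals statement, and then only Theorem~\ref{thm:mot.int.} and Proposition~\ref{locbasicop} are needed. This is the ``simple direct proof'' the paper alludes to in the sentence preceding Corollary~\ref{cor.finLp}. What your approach buys is independence from Theorem~\ref{L^pcom} and hence from Lemma~\ref{Lp-vs-pointwise} and the reparameterization arguments; what the paper's approach buys is a uniform illustration of how several corollaries (\ref{cor.finLp}, \ref{cor.almost}) fall out of the single limit theorem~\ref{L^pcom}.
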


\begin{proof}
Choose $H \in \cCexp(X \times \VF^n)$ such that $H(x,y)$ is $0$ if and only if $P(x,y)$ holds,
and set $H'(x,y,\mu) := q^{\mu} H(x,y)$. Then for any $F, \psi, x, y$, the sequence $H'_{F,\psi}(x,y,\mu)$ diverges for $\mu \to \infty$ if and only
$P_{F,\psi}(x,y)$ does not hold.
Thus $P_{F,\psi}(x,y)$ holds for almost all $y$ if and only if the $L^\infty$-limit of $H'_{F,\psi}(x,\cdot,\mu)$ exists,
so that the corollary follows by applying Theorem~\ref{L^pcom} to $H'(x,y,\mu)$.
\end{proof}

As usual, we also obtain some transfer results:

\begin{cor}[Transfer principles for limits of functions]\label{trans:funlim}
Let $H$ be in $\cCexp(X \times \VF^n \times \VG_{\ge 0})$ for some definable set $X$ and some $n \ge 0$.
Given $F\in \Loc_{ \gg1}$, $\psi \in \cD_F$ and $x\in X_F$, consider the sequence of functions (on $F^n$)
\begin{equation}\label{eq.seq.trans}
y \mapsto H_{F,\psi}(x, y, \mu),
\end{equation}
indexed by integers $\mu\geq 0$.

Fix a notion of convergence among: pointwise convergence / uniform convergence / convergence in $L^2$-norm / in $L^\infty$-norm.

Then there exists $M$ such that, for any $F\in \Loc$ of residue characteristic $\ge M$,
the truth of each of the following statements depends only on (the isomorphism class of) the residue field of $F$.
\[
\text{The sequence (\ref{eq.seq.trans}) converges for all $x\in X_F$ and all $\psi\in\cD_F$};
\]
\[
\text{The sequence (\ref{eq.seq.trans}) converges to $0$ for all $x\in X_F$ and all $\psi\in\cD_F$}.
\]
\end{cor}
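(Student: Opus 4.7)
The plan is to express each of the two statements as a 0-ary $\cCexp$-condition and then invoke the transfer principle, Theorem~\ref{thm.trans}. The preparatory work done in Theorem~\ref{L^pcom} supplies both a $\cCexp$-locus capturing convergence (part (1)) and a $\cCexp$-function $G$ representing the limit (part (2)), so the remainder is formal manipulation via the locus formalism of Section~\ref{sec:locbasicop}.

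For the first statement, fix one of the four notions of convergence. By Theorem~\ref{L^pcom}(1), the set
\[
C := \{x \in X \mid \text{the sequence } (y \mapsto H(x,y,\mu))_\mu \text{ converges in the chosen sense}\}
\]
is a $\cCexp$-locus, so membership in $C$ is a $\cCexp$-condition on $x$. By Proposition~\ref{locbasicop}(3), the 0-ary condition $\forall x \in X\colon x \in C$ is again a $\cCexp$-condition; unfolding the definition, its truth value at $(F,\psi)$ is precisely the truth of "the sequence converges for every $x \in X_F$" (the quantifier over $\psi$ being built into the definition of a $\cCexp$-locus). Applying Theorem~\ref{thm.trans} to this 0-ary condition yields the desired residue-field-only dependence.

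For the second statement, I would use Theorem~\ref{L^pcom}(2) to obtain $G \in \cCexp(X\times\VF^n)$ that coincides with the limit whenever convergence holds. Then "the sequence converges to $0$" is the conjunction of "the sequence converges" (already a $\cCexp$-condition by the previous paragraph) with "the limit equals $0$". For pointwise and uniform convergence, the latter is expressed by the $\cCexp$-condition $\forall y \in \VF^n\colon G(x,y) = 0$, using Proposition~\ref{locbasicop}. For $L^2$ and $L^\infty$ convergence, where the limit is only determined up to a null set, the correct formulation is instead "$G(x,y) = 0$ for almost all $y$", which is a $\cCexp$-condition on $x$ by Corollary~\ref{cor.almost}. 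In each case the conjunction yields a $\cCexp$-condition on $x$; universal quantification over $x$ (Proposition~\ref{locbasicop}(3)) and Theorem~\ref{thm.trans} complete the proof.

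The only point requiring care is the distinction between the pointwise/uniform case, where a genuine universal quantifier over $y$ captures "limit equals $0$", and the $L^2$/$L^\infty$ cases, where one must invoke the almost-everywhere quantifier of Corollary~\ref{cor.almost} because $L^p$-limits are defined only modulo null sets. Beyond that, the argument is a routine assembly of Theorem~\ref{L^pcom}, Corollary~\ref{cor.almost}, Proposition~\ref{locbasicop}, and Theorem~\ref{thm.trans}; there is no substantive obstacle.
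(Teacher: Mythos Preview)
Your proposal is correct and follows essentially the same approach as the paper: express the statement as a $\cCexp$-condition via Theorem~\ref{L^pcom} and Proposition~\ref{locbasicop}, then invoke Theorem~\ref{thm.trans}. You are in fact more careful than the paper's own sketch, which uses the condition $\forall y\colon G(x,y)=0$ uniformly for all four notions of convergence; your observation that for $L^2$- and $L^\infty$-convergence one should instead use the almost-everywhere quantifier of Corollary~\ref{cor.almost} (since the $G$ of Theorem~\ref{L^pcom}(2) is only guaranteed to agree with the $L^p$-limit off a null set) is a genuine refinement.
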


Concerning transfer of convergence to $0$, let $G$ be obtained from $H$ as in Theorem~\ref{L^pcom} (2). Then
convergence to $0$ can be expressed by a $\cCexp$-condition as follows (using Theorem~\ref{L^pcom} (1) and Proposition~\ref{locbasicop}):
\[
\forall x\colon \big(\text{the sequence $H(x,\cdot,\mu)$ converges}
\wedge \forall y\colon G(x, y) = 0\big)
\]

\begin{cor}[Transfer principles for $L^p$-norms]\label{trans:Lplim}
Let $H$ be in $\cCexp(X \times \VF^n)$ for some definable set $X$ and some $n \ge 0$.
Then there exists $M$ such that, for any $F\in \Loc$ of residue characteristic $\ge M$,
the truth of each of the following four statements depends only on (the isomorphism class of) the residue field of $F$.
\[
\text{$H_{F,\psi}(x, \cdot)$ has finite $L^1$-/$L^2$-/$L^\infty$-norm
for all $x\in X_F$ and all $\psi\in\cD_F$};
\]
\[
\text{$H_{F,\psi}(x, \cdot)$ is $0$ on almost all of $F^n$,
for all $x\in X_F$ and all $\psi\in\cD_F$}.
\]
\end{cor}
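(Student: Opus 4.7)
The plan is to follow the recipe used throughout the paper for all the transfer corollaries: express each of the statements displayed in the corollary as a $\cCexp$-condition with no free variables, and then invoke Theorem~\ref{thm.trans}. Since the transfer principle for a $\cCexp$-condition $P$ on $x$ is obtained by applying transfer to the $0$-ary condition $\forall x \in X \colon P(x)$ (which is still of $\cCexp$-class by Proposition~\ref{locbasicop}(3)), the entire task reduces to checking that each of the displayed properties is a $\cCexp$-condition in $x$.

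For the $L^p$-norm statement (for $p \in \{1, 2, \infty\}$), I would apply Corollary~\ref{cor.finLp} directly: it asserts precisely that the set of $x \in X$ for which $H(x, \cdot)$ is $L^p$-integrable is a $\cCexp$-locus, so the property ``$H(x,\cdot)$ has finite $L^p$-norm'' is a $\cCexp$-condition on $x$. Taking its universal closure over $x \in X$ with Proposition~\ref{locbasicop}(3) and then feeding the result to Theorem~\ref{thm.trans} supplies the desired $M$ and the transfer conclusion.

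For the ``almost everywhere $0$'' statement, I would apply Corollary~\ref{cor.almost} to the $\cCexp$-condition $P(x, y) \coliff H(x, y) = 0$ (which is a $\cCexp$-condition on $(x, y)$ because $H \in \cCexp(X \times \VF^n)$, by definition of $\cCexp$-locus). Corollary~\ref{cor.almost} then guarantees that
\[
Q(x) \coliff \text{$P(x, y)$ holds for almost all } y \in \VF^n
\]
is itself a $\cCexp$-condition on $x$. Once again, universal quantification over $x$ via Proposition~\ref{locbasicop}(3) yields a $0$-ary $\cCexp$-condition, and Theorem~\ref{thm.trans} delivers the transfer.

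I do not anticipate any genuine obstacle here: the nontrivial content has already been absorbed into Corollaries~\ref{cor.finLp} and \ref{cor.almost}, and the present corollary is a formal consequence obtained by the standard composition ``express as $\cCexp$-condition $\to$ quantify universally $\to$ apply Theorem~\ref{thm.trans}''. The only small point to bear in mind is that the implicit quantification over $\psi \in \cD_F$ in the displayed statements is already built into the definition of a $\cCexp$-condition holding uniformly, so no additional work is needed on that front.
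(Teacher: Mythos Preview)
Your proposal is correct and matches the paper's approach: the paper does not spell out a proof for this corollary, but it is clearly intended as an immediate application of Corollaries~\ref{cor.finLp} and \ref{cor.almost} together with the standard recipe ``express as $\cCexp$-condition $\to$ universally quantify via Proposition~\ref{locbasicop}(3) $\to$ apply Theorem~\ref{thm.trans}'', exactly as you describe.
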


Let us now prove Theorem~\ref{L^pcom}.
One ingredient to the second part is that existence of the $L^p$-limit implies
almost everywhere existence of the point-wise limit. This is not true in general, but it is true for sequences of functions of $\cCexp$-class.
Here is the precise statement:

\begin{lem}\label{Lp-vs-pointwise}
Fix $p \in \R_{\ge1} \cup \{+\infty\}$ and let $H$ be in $\cCexp(X \times \VF^n\times \VG_{\ge0})$ for some definable set $X$.
Also fix $F \in \Loc_{\gg1}$, $\psi \in \cD_F$ and $x \in X_F$.
If the sequence of functions $H_{F,\psi}(x, \cdot, \lambda)$ (on $F^n$) converges in $L^p$-norm for $\lambda \to \infty$,
then $H_{F,\psi}(x, y, \lambda)$ converges for almost all $y \in F^n$.
\end{lem}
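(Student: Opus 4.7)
The plan is to argue by contradiction: assume that the set $Y_1$ of $y \in F^n$ where $\lambda \mapsto H_{F,\psi}(x,y,\lambda)$ does not converge has positive measure, and use the rigid exponential-polynomial structure of $\cCexp$-functions in a single $\VG$-variable (Proposition~\ref{repar}) to contradict the $L^p$-convergence. Write $h_\lambda(y) := H_{F,\psi}(x,y,\lambda)$ and suppose $h_\lambda \to g$ in $L^p$ for our fixed $F,\psi,x$. Measurability of $Y_1$ is automatic from measurability of the $h_\lambda$; alternatively it is a $\cCexp$-locus by Theorem~\ref{limits:basic}(1).

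Apply Proposition~\ref{repar} to $H$ viewed as a $\cCexp$-function on $(X \times \VF^n) \times \VG_{\geq 0}$. After reparametrization $\sigma$, the function is a finite sum indexed by Presburger cells $A_j$, on each of which it takes the explicit form $\sum_i c_{ij}(z)\,\mu^{a_{ij}}\,q^{b_{ij}\mu}$; only cells with $\beta_j = +\infty$ are relevant for the limit $\lambda \to +\infty$. As in the proof of Theorem~\ref{limits:basic}, non-convergence of $h_\lambda(y)$ is witnessed by one of two explicit algebraic failures: (a) on some unbounded cell the coefficient $c_{ij}$ of a divergent term (one with $b_{ij}>0$, or with $b_{ij}=0$ and $a_{ij}>0$) is non-zero at $y$, forcing $|h_\lambda(y)| \to \infty$ along the corresponding arithmetic progression $\Lambda \subset \VG_{\geq 0}$; or (b) all divergent coefficients vanish but two unbounded cells $C_1, C_2$ yield distinct finite cell-limits $\ell_1(y) \neq \ell_2(y)$.

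Since only finitely many cell indices $j$ occur and, for fixed $F$, the auxiliary $\RFss_F$-index set is finite, $Y_1$ is a finite union of measurable subsets according to which specific failure and which specific cells are responsible. At least one such piece $Y_1' \subset Y_1$ has positive measure and carries a uniform failure. In case (a), restrict $\lambda$ to $\Lambda$; the $L^p$-convergence $h_\lambda \to g$ passes to this subsequence, so by the standard Borel--Cantelli extraction (choose $\lambda_k \in \Lambda$ with $\|h_{\lambda_k} - g\|_p \le 2^{-k}$) there is a sub-subsequence along which $h_{\lambda_k}(y) \to g(y)$ for almost every $y$; since $Y_1'$ has positive measure this gives some $y \in Y_1'$ with $h_{\lambda_k}(y) \to g(y)$, contradicting $|h_\lambda(y)| \to \infty$ along $\Lambda$. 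In case (b), apply the same extraction separately with $\lambda$ restricted to $C_1$ and to $C_2$; each extraction produces a further subsequence along which $h_\lambda \to g$ almost everywhere on $Y_1'$, but the pointwise limits along $C_1$ and $C_2$ are $\ell_1(y)$ and $\ell_2(y)$ respectively, forcing $\ell_1 = g = \ell_2$ almost everywhere on $Y_1'$ and contradicting the definition of $Y_1'$.

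The main obstacle is organizing the partition of $Y_1$ so that the cell structure is uniform on each piece: the cells $A_j$ depend on the auxiliary $\RFss$-variables introduced by the reparametrization $\sigma$ and on $y$ via the projections $B_j$, so one must carefully refine $Y_1$ into finitely many measurable pieces on each of which a single arithmetic progression $\Lambda$ (case (a)) or a fixed pair of cells $C_1, C_2$ (case (b)) realizes the failure. This reduction is pure bookkeeping once Proposition~\ref{repar} is in place. The remaining work is the standard $L^p$-extraction argument, which is uniform in $p \in [1,+\infty]$; the case $p = +\infty$ is in fact immediate since $L^\infty$-convergence already implies uniform convergence off a null set, hence pointwise convergence almost everywhere.
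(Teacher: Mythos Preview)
Your proof is correct and follows the same overall architecture as the paper: apply Proposition~\ref{repar}, discard bounded cells, and split the non-convergence set $Y_1$ according to whether (a) a divergent coefficient $c_{ij}$ is nonzero or (b) two unbounded cells yield distinct constant limits. Case~(b) is handled essentially identically in both arguments.

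The genuine difference is in case~(a). The paper argues \emph{directly} that the $L^p$-norm of $h_\lambda$ restricted to $Y$ diverges: it fixes the dominant divergent term (maximal $b_{ij}$, then maximal $a_{ij}$) and observes that for large $\lambda$ in the relevant arithmetic progression, $\|h_\lambda\|_{L^p(Y)}$ is at least half the $L^p$-norm of that single term, which blows up. You instead invoke the standard Borel--Cantelli extraction: from $L^p$-convergence of the subsequence along $\Lambda$, pass to a sub-subsequence converging a.e., and obtain a point of $Y_1'$ where $h_{\lambda_k}(y)$ simultaneously converges and tends to $\infty$. Your route is softer and works uniformly in $p$ without any computation; the paper's route is more hands-on but avoids appealing to the extraction lemma. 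Both are short.

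One minor remark: you handle the reparametrisation by partitioning $Y_1$ according to the (finitely many) auxiliary $\RFss_F$-values, whereas the paper actually eliminates the reparametrisation entirely by observing that on each unbounded cell the $\RFss$-coordinate is already determined by $(x,y)$. Your finiteness argument is sufficient, and you correctly flag this as the only real bookkeeping.
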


(Without the assumption of $H$ being of $\cCexp$-class, one can only find a subsequence which converges for almost all $y$.)

\begin{proof}[Proof of Lemma~\ref{Lp-vs-pointwise}]
We apply the same reasoning as in the proof of Theorem~\ref{limits:basic}, using the same notations (except that
our set $X \times \VF^n$ plays the role of the set $X$ from the proof of Theorem~\ref{limits:basic}), namely:
We find a reparameterization $\sigma\colon U = X \times \VF^n \times \VG_{\ge 0} \to U_{\rm par}$, a partition of $U_{\rm par}$
into finitely many $A_j$, we can disregard those $A_j$ that have an upper bound $\beta_j \ne +\infty$, and after that,
we can get rid of the reparameterization again in the same way as for Theorem~\ref{limits:basic}.

Fix $F$, $\psi$ and $x \in X_F$ for the entire remainder of the proof, and
suppose that there exists a set $Y \subset F^n$ of positive measure such that
$H_{F,\psi}(x, y, \cdot)$ does not converge for any $y \in Y$. For each such $y \in Y$, either (\ref{limex1}) or (\ref{limex2}) fails.
Thus, up to shrinking $Y$, either

(a) there exists an
$A_j$ with $\{x\} \times Y \subset B_j$ such that $c_{ij,F,\psi}(x,y) \not= 0$ for all $y$ in $Y$, where $(i,j) \in I$, or

(b)
there exist
$A_j$, $A_k$ with $\{x\} \times Y \subset B_j \cap B_k$ such that $c_{0j,F,\psi}(x,y) \ne c_{0k,F,\psi}(x,y)$ for all $y$ in $Y$.

If (a) holds, we fix $j$ and choose, among the different $i$ for which (a) holds, the one
corresponding to the dominant term, i.e., such that $b_{ij}$ is maximal, and among equal $b_{ij}$, the one such that $a_{ij}$
is maximal. Then for big $\lambda \equiv e_j \mod n_j$, the $L^p$-norm of $H_{F,\psi}(x, \cdot, \lambda)$ restricted to $Y$ is eventually larger than
half of the
$L^p$-norm of the dominant term $c_{ij,F,\psi}(x,y)\lambda^{a_{ij}}q_F^{b_{ij}\lambda}$ on $Y$
% $p$-th root of
% \[
% e\int_Y |c_{ij,F,\psi}(x,y)|_\C^p (\lambda^{a_{ij}}q_F^{b_{ij}\lambda})^p
% \]
% for some  $e>0$ independent from $\lambda$
and hence diverges.

Now suppose that no $A_j$ as in (a) exists. Then we are in case (b), and $A_j$ and $A_k$ correspond to two different sub-sequences
of $H$, which, on $Y$, converge to two different functions, namely $c_{0j,F,\psi}(x,\cdot)$ resp.\ $c_{0k,F,\psi}(x,\cdot)$.
Again, this contradicts $L^p$-convergence of $H_{F,\psi}(x, \cdot, \cdot)$.
\end{proof}

\begin{proof}[Proof of Theorem~\ref{L^pcom}]
We first prove (2). Using Theorem~\ref{limits}, we find a function
$G \in \cCexp(X \times \VF^n)$ such that
$$
G_{F,\psi}(x,y) = \lim_{\lambda\to\infty}H_{F,\psi}(x,y,\lambda)
$$
for every $F, \psi$, $x \in X_F$ and $y \in F^n$ for which that limit exists.
In particular, if the sequence $H_{F,\psi}(x,\cdot,\lambda)$ converges pointwise (or uniformly),
then the limit is $G_{F,\psi}(x,\cdot)$. If $H_{F,\psi}(x,\cdot,\lambda)$ converges in $L^p$-norm,
then $H_{F,\psi}(x,y,\lambda)$ converges for almost all $y$ by Lemma~\ref{Lp-vs-pointwise},
and for those $y$ for which it does, the limit is $G_{F,\psi}(x,y)$. Therefore, $G_{F,\psi}(x,\cdot)$
is also the $L^p$-limit.

We now consider (1) with the various notions of convergence. By replacing $H(x,y,\lambda)$
by $H(x,y,\lambda) - G(x,y)$ (with $G$ obtained from Part (2)), we may assume that if $\lim_{\lambda\to\infty}H_{F,\psi}(x,y,\lambda)$ exists,
then it is equal to $0$.

Pointwise convergence: This can be expressed as
\[
\forall y\colon \lim_{\lambda\to\infty}H_{F,\psi}(x,y,\lambda) \text{ exists},
\]
which is a $\cCexp$-condition by Theorem~\ref{limits:basic}.

Uniform convergence: Apply Theorem~\ref{thm:limits0:gen} to $H$, where $\gamma\colon Y \times \VG_{\ge0} \to \VG$
just sends $(y,\lambda)$ to $\lambda$.

$L^2$-convergence: The function
\[
(x,\lambda) \mapsto \int_{\VF^n}|H(x,y,\lambda)|^2_\CC dy
\]
is of $\cCexp$-class
(since the square of the absolute value can be obtained by multiplying with the complex conjugate, and then using Theorem~\ref{thm:mot.int.})
so this going to 0 is a $\cCexp$-condition by Theorem~\ref{limits:basic}.

$L^\infty$-convergence:
By Theorem 4.4.3 of \cite{CHallp}
%\change{and the note following it,}
$\cCexp$-functions are locally constant almost everywhere; more precisely,
there exists a definable set $Z \subset X \times \VF^n \times \VG$ such that for every $F$, $\psi$, $x$ and $\lambda$,
the set $F^n \setminus Z_{x,\lambda}$ has dimension less than $n$ and $H_{F,\psi}(x,\cdot,\lambda)$ is locally constant on
$Z_{x,\lambda}$. Define $H' \in \cCexp(X \times \VF^n \times \VG)$ to be equal to $H$ on $Z$ and equal to $0$ outside of $Z$. Then
$H'$ and $H$ have the same behavior concerning $L^\infty$-convergence, but $H'$ has the additional property
that $L^\infty$-convergence is equivalent to uniform convergence; this has already been dealt with above.
\end{proof}

\subsection{Stability under Fourier Transformation for $L^2$-functions of $\cCexp$-class}\label{sec:Fourier}

That the Fourier transform of an $L^1$-function of $\cCexp$-class is again of $\cCexp$-class follows directly from closedness under
integration; see \cite[Corollary 4.3.1]{CHallp} for a version in the current context. For $L^2$-functions of $\cCexp$-class,
this can be obtained using our formalism as follows.
It seems that this result is new even for fixed $F$. It compares to a real counterpart about stability under $L^2$ Fourier transformation of Theorem 8.10 of \cite{CCMRS}.

\begin{thm}[Stability under $L^2$ Fourier transformation]\label{stab:four:II}
Let $H$ be in $\cCexp(X\times \VF^m)$ for some $m\geq 0$ and some definable set $X$. Then there exists $\cF_{/X}(H)$ in $\cCexp(X\times \VF^m)$ such that the following holds for all $F$ in $\Loc_{\gg 1}$, for all $\psi\in \cD_F$ and for each $x\in X_F$.
\quote{If the map $H_{F,\psi,x} : y\mapsto H_{F,\psi}(x,y)$ is $L^2$-integrable on $F^m$, then $z\mapsto \cF_{/X}(H)_{F,\psi}(x,z)$ is the Fourier transform of $H_{F,\psi,x}$. }
\end{thm}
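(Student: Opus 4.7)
The plan is to approximate $H_{F,\psi,x}$ in $L^2$ by truncations that are automatically $L^1$, apply the already-known closure of $\cCexp$ under the $L^1$-Fourier transform to the truncations, and then extract the $L^2$-limit using Theorem~\ref{L^pcom}.

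Concretely, for $\mu \in \VG_{\ge 0}$ define the truncation
\[
H_\mu(x, y) := H(x, y) \cdot \mathbf{1}[\ord(y_i) \ge -\mu \text{ for all } i = 1, \dots, m],
\]
which is a $\cCexp$-function on $X \times \VF^m \times \VG_{\ge 0}$ since the cutoff is definable. Its support in $y$ has finite measure $q^{m\mu}$, so for any $(F,\psi,x)$ with $H_{F,\psi,x} \in L^2$, Cauchy--Schwarz on the support gives $H_{\mu,F,\psi,x} \in L^1$, and dominated convergence (with pointwise domination $|H_\mu|^2 \le |H|^2$) gives $H_{\mu,F,\psi,x} \to H_{F,\psi,x}$ in $L^2$-norm as $\mu \to \infty$.

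Set
\[
K(x, z, \mu) := \int_{\VF^m} H_\mu(x, y)\, \psi(y \cdot z)\, |dy|.
\]
The integrand is a $\cCexp$-function of $(x, y, z, \mu)$, so Theorem~\ref{thm:mot.int.} produces $K \in \cCexp(X \times \VF^m \times \VG_{\ge 0})$ that equals this integral wherever the integrand is $L^1$ in $y$. Under the $L^2$-hypothesis on $H_{F,\psi,x}$, this is the case for every $z$ and $\mu$, and $K(x, \cdot, \mu)$ is then the classical $L^1$-Fourier transform of $H_{\mu,F,\psi,x}$. Since $\psi \in \cD_F$ has conductor $\cO_F$, the Haar measure we use (which assigns mass $1$ to $\cO_F$) is self-dual with respect to $\psi$, so Plancherel applies and
\[
\|K(x, \cdot, \mu) - \widehat{H_{F,\psi,x}}\|_{L^2} = \|H_{\mu,F,\psi,x} - H_{F,\psi,x}\|_{L^2} \longrightarrow 0.
\]

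Now apply Theorem~\ref{L^pcom}(2) to $K$, taking $X$ as the parameter space, $\VF^m$ as carrying the Fourier-dual variable $z$, and $\mu$ as the sequence parameter. This produces $G \in \cCexp(X \times \VF^m)$ that coincides (in $L^2$) with the limit of $K(x, \cdot, \mu)$ whenever that $L^2$-limit exists. Set $\cF_{/X}(H) := G$. For any $(F,\psi,x)$ with $H_{F,\psi,x} \in L^2$, the previous paragraph ensures the $L^2$-limit exists and equals $\widehat{H_{F,\psi,x}}$, so $G_{F,\psi}(x, \cdot) = \widehat{H_{F,\psi,x}}$ as required. The only point requiring care is checking that the integral defining $K$ converges on the relevant set of parameters, which is automatic from the truncation plus Cauchy--Schwarz; no new fundamental result is needed beyond correctly assembling Theorems~\ref{thm:mot.int.} and \ref{L^pcom}.
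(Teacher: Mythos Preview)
Your proof is correct and follows essentially the same approach as the paper: truncate $H$ to a definable compact set in $y$, apply Theorem~\ref{thm:mot.int.} to obtain the truncated Fourier transforms as a $\cCexp$-function of $(x,z,\mu)$, and then invoke Theorem~\ref{L^pcom}(2) to extract the $L^2$-limit. You supply more of the underlying analysis (Cauchy--Schwarz for $L^1$-integrability of the truncations, dominated convergence for $L^2$-approximation, Plancherel and self-duality for the limit identification) than the paper's brief proof does, but the skeleton is identical.
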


\begin{proof}
The Fourier transform of an $L^2$-function $H_{F,\psi}(x,\cdot)$ is defined as the $L^2$-limit of the sequence of functions
\[
G_{F,\psi}(x,\cdot,\lambda)\colon z \mapsto \int_{y\in F^m, \ord_F(y) > -\lambda} H_{F,\psi} (x,y) \psi ( y\cdot z )   |dy|,
\]
indexed by $\lambda \in \NN$ (where $z$ runs over $F^m$ and $y \cdot z$ is the scalar product).
By Theorem \ref{thm:mot.int.} (and the comment below that theorem), $G$ is of $\cCexp$-class, so the limit is of $\cCexp$-class by Theorem~\ref{L^pcom}.
\end{proof}

Note that exactly the same proof also works for $L^p$-functions for $1 \le p \le 2$.

\section{An application to orbital integrals}\label{sec:app}
Some of the motivation for this paper came from harmonic analysis on reductive groups, and we conclude it by an illustration: we apply the powerful techniques developed above to Fourier transforms of orbital integrals.

\subsection{Preliminaries}
Let $\bG$ be a connected reductive algebraic group
over a local field $F$, with Lie algebra $\fg$.
We need to represent $\bG(F)$ and $\fg(F)$ as members of a family of definable sets, and we do it as in
\cite[\S 2]{GordonHales}.  Let us briefly review this setting and the notation.

\subsubsection{Classification of reductive groups}\label{subsub:groups}
We start with the \emph{fixed choices} as in \cite[\S 2.1]{GordonHales}.
The fixed choices are constructed so that
for each Galois extension of local fields $L/F$ whose Galois group $\gal(L/F)=\Gamma$ matches the data from the fixed choice (see item (1) below) (and with residue characteristic of $F$ not $2$ or $3$),
a fixed choice $\Sigma$ determines (uniquely up to isomorphism) a split reductive group  $\bG^{\ast\ast}$ defined over $F$
and an action of $\gal(L/F)$ on $\bG^{\ast\ast}(L)$.

A fixed choice $\Sigma$ consists of:
\begin{enumerate}
\item An abstract finite group $\Gamma$ with an enumeration of its elements, a fixed subgroup $I$, and a distinguished element $\sigma_1$. %satisfying the
Later when we associate a split reductive algebraic $F$-group $\bG^{\ast\ast}$ with our fixed choice, the group $\Gamma$ is interpreted as the Galois group of a Galois extension of $F$
with $I$ being the inertia subgroup, and  $\sigma_1$ is interpreted as a generator of the Galois group of the maximal unramified sub-extension;
\item a Dynkin diagram with an action of $\Gamma$ (this is interpreted as the Dynkin diagram of $\bG^{\ast\ast}$);
\item Two abstract lattices of the rank determined by the Dynkin diagram with action of $\Gamma$ (these are interpreted as the character and co-character lattices of a split maximal torus $\bT^{\ast\ast}$ in $\bG^{\ast\ast}$).
\end{enumerate}

Each fixed choice $\Sigma$ yields a finite set of isomorphism classes of connected reductive groups $\bG$ defined over $F$,
as described in \cite[\S 6]{GordonHales}. Those isomorphism classes corresponding to $\Sigma$ are distinguished by the elements of a definable parameter set $Z_\Sigma$ (which we will call the ``cocycle space''). An element of $(Z_\Sigma)_{\K}$ encodes (a) an extension $L/\K$ over which $\bG$ splits
(which has to have $\gal(L/F)\simeq \Gamma$ with the inertia subgroup mapping to $I$) and (b) the Galois cocycle that determines $\bG$ as a twist of $\bG^{\ast\ast}$.
We observe that if $\bG$ is unramified over $F$, then {$\bG$ itself} is also determined uniquely by the fixed choices.
The precise details of this construction will not be important here, and we refer the reader to \cite[\S 2]{GordonHales} for the details.

 Naturally, not all fixed choices of $\Gamma$, $I$, etc. give rise to a family of reductive groups (e.g. $\Gamma$ has to be solvable in order to be a Galois group of an extension of local fields, etc.). From here onwards, we are only interested in fixed choices that do satisfy the compatibility conditions that ensure they give rise to  a family of non-empty reductive groups.

Here it is important for us to note that even though the set $Z_\K$ is infinite for all $\K$, there are only finitely many isomorphism classes of connected reductive groups corresponding to the same fixed choices, as apparent by
inspection from \cite[\S 6]{GordonHales}, and these isomorphism classes are parameterized independently  of $\K$.
Indeed, in the case (1) of \emph{loc.cit.}, there are $d$ possible invariants in the Brauer group $\Q/\Z$ with denominator $d$; in the cases (2) and (3), excluding type $D_4$, there are two possible ramified quadratic extensions when $p>2$; if $\bG$ contains any factors of type $D_4$, there are still finitely many possibilities, corresponding to distinct  degree $3$ ramified extensions with Galois group $S_3$.

In summary, we have
\begin{prop}\label{prop:groups}(\cite[\S2.2.2]{GordonHales}, \cite[Theorem 4]{gordon-roe})
For every fixed choice $\Sigma$ there exists $M>0$, a definable set $Z_\Sigma$,
  and definable families
 $\dG \to Z_{\Sigma}$  and $\dg \to Z_\Sigma$ such that for every local field $F$ of residue characteristic greater than $M$
  the following holds:
 \begin{quote}
 for $z\in ({Z_\Sigma})_F$, the set $\dG_{F,z}$ is the set $\bG_z(F)$ of $F$-points of a connected reductive group
$\bG_z$ with absolute root datum determined by $\Sigma$
and the set $\dg_{F,z}$ is the set of $F$-points $\fg_z(F)$ of the Lie algebra of $\bG_z$.
\end{quote}
\end{prop}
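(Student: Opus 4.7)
The plan is to execute the classification of connected reductive $F$-groups attached to $\Sigma$ by Galois descent in a uniformly definable fashion, largely following \cite[\S 2.2.2]{GordonHales} and \cite[Theorem 4]{gordon-roe}. First, I would realize the split form $\bG^{\ast\ast}$ as a Chevalley group scheme determined by the Dynkin diagram and lattice data in $\Sigma$. Since $\bG^{\ast\ast}$ is cut out of some matrix algebra by polynomial equations with integer coefficients (structure constants of a Chevalley basis), both $\bG^{\ast\ast}(F)$ and its Lie algebra $\fg^{\ast\ast}(F)$ form definable families uniformly in $F$, and the $\Gamma$-action on the root datum prescribed by $\Sigma$ yields a quasi-split $F$-form $\bG^{\ast}$ by an explicit pinning twist.

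Next I would present the splitting field $L/F$ as a definable object. The data $(\Gamma, I, \sigma_1)$ in $\Sigma$ together with the residue characteristic hypothesis pin down $L$ up to finitely many possibilities, enumerated in \cite[\S 6]{GordonHales}; each such $L$ can be written as $F[T]/(p(T))$ for a polynomial $p$ whose degree and shape depend only on $\Sigma$, so $L$ is encoded as an internal $F$-algebra structure on a finite-dimensional $F$-vector space carrying a definable $\Gamma$-action by $F$-linear maps. The cocycle space $Z_{\Sigma}$ is then defined to be the definable set of pairs consisting of a choice of such $L$ together with a tuple $(c_{\sigma})_{\sigma \in \Gamma} \in \bG^{\ast}(L)^{|\Gamma|}$ satisfying the cocycle identity with respect to the Galois action; since the cocycle condition is a finite conjunction of polynomial equations in coordinates of the $c_{\sigma}$, this is definable.

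Finally, for each $z \in Z_{\Sigma}$ encoding a cocycle $c^z$, I would define $\bG_{z}$ as the fixed-point subgroup of $\bG^{\ast}(L)$ under the twisted $\Gamma$-action $\sigma \cdot g := c^{z}_{\sigma}\cdot {}^{\sigma} g \cdot (c^{z}_{\sigma})^{-1}$. Taking fixed points under a finite group acting through definable maps is a finite conjunction of definable equalities between elements of the $F$-vector space underlying $L^{N}$, so the resulting family $\dG \to Z_{\Sigma}$ with fibers $\bG_{z}(F) \subset \bG^{\ast}(L)$ is definable. The analogous construction, using the adjoint action on $\fg^{\ast}(L)$, produces $\dg \to Z_{\Sigma}$.

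The main obstacle is the bookkeeping required to make the presentation of $L$ uniform across the finitely many extension types listed in \cite[\S 6]{GordonHales}, and to verify that the cocycle condition, descent, and the identification of the Lie algebra all commute with the base change $F \mapsto F$. This is exactly what is checked in the cited references. The bound $M$ is chosen so large that the residue characteristic avoids the primes dividing the order of $\Gamma$, the fundamental group, and any relevant torsion, ensuring that wild phenomena do not disrupt the tame classification, after which the construction goes through uniformly in $F$.
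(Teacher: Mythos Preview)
The paper does not supply its own proof of this proposition: it is stated as a summary of the discussion in \S\ref{subsub:groups} and cited directly from \cite[\S 2.2.2]{GordonHales} and \cite[Theorem 4]{gordon-roe}. Your sketch accurately reconstructs the strategy of those references---realize $\bG^{\ast\ast}$ as a Chevalley scheme, encode the splitting field $L$ and the Galois cocycle as definable data to form $Z_\Sigma$, and obtain $\bG_z(F)$ as the fixed points of the twisted $\Gamma$-action---which is exactly the content the paper is summarizing before stating the proposition.
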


In the following, we deal with the sets $\bG_z(F)$, $\fg_z(F)$, etc., while implicitly thinking of them as elements of the definable families
$\dG \to Z$ and $\dg \to Z$.
In particular,
by a ``definable subset $\omega \subset \fg_z(F)$'',
we really mean  a definable subset $\domega \subset \dg$, where $\omega := \domega_{F,z}$.

\subsubsection{Orbital integrals}\label{sub:orb.int}
Our goal is to provide uniform estimates for  Fourier transforms of the orbital integrals on the Lie algebra.

Let $X\in \fg(\K)$. An orbital integral at $X$ is the distribution
on $\tf(\fg(\K))$, defined by
$$\mu_X(f)=\int_{\bG(\K)/C_{\bG}(X)}f(\Ad(g^{-1}) X) d^\ast x,$$
where $C_{\bG}(X)=\{g\in \bG(\K)\mid \Ad(g)X=X\}$ is the centralizer of $X$, and
$d^\ast x$ is a $\bG(\K)$-invariant measure on $\bG(\K)/C_{\bG}(X)$.
Note that such a measure is unique up to a constant multiple, and is uniquely determined by the normalization of the Haar measures on $\bG(\K)$ and on $C_{\bG}(X)$.
The fact that orbital integrals converge and thus are well-defined distributions (which is a  non-trivial question when $X$ is not regular semi-simple) was proved by Deligne and Ranga Rao for the fields $\K$ of characteristic zero, and
by G. McNinch for $\K$ of sufficiently large positive characteristic.

In this paper, however, we focus on regular semisimple elements $X$. We will denote the set of regular semisimple elements in $\fg(\K)$ by $\fg(\K)^\rss$.
By definition, the centralizer $C_{\bG}(X)$ of a regular semisimple element is a maximal torus. There are finitely many conjugacy classes of maximal tori in $\bG(\K)$, with the upper bound on their number depending only on the fixed choices determining $\bG^{\ast\ast}$, so in order to pin down the normalization of the measures on all regular semi-simple orbits it suffices to pick a Haar measure on $\bG(\K)$ and on the (finitely many) representatives of conjugacy classes of the maximal tori.
The classical statements, quoted  below, on the boundedness of the Fourier transforms of orbital integral are independent of these normalizations of the measures.
However, the main point of this section is to discuss how these bounds depend on the field $\K$, and therefore we need a consistent normalization of measures.
Following e.g.\ Kottwitz' note on bounds for orbital integrals \cite[Appendix A]{ShinTemp}, we choose on $\bG(\K)$ and on all its maximal tori the so-called canonical measures, which assign volume $1$ to the canonical parahoric subgroup in the sense of Gross.
With this normalization of measures, for $X\in \fg(\K)^\rss$, let $\mu_X$ denote the orbital integral at $X$, considered as a distribution on $C_c^\infty(\fg(\K))$.

\subsubsection{Fourier transform}
Given an additive  character $\psi$ of $F$, and a non-degenerate $\bG(F)$-invariant bilinear form $B$ on $\fg(F)$, the Fourier transform for a
Schwartz-Bruhat function $f\in \tf(\fg(F))$ is defined as
$$\hat f(Y) =\int_{\fg(F)} f(X)\psi(B(X,Y)) \, dX.$$
Later we will allow the character $\psi$ to vary through the collection $\cD_F$ of level zero characters satisfying {Definition}  \ref{psiu}.
We will also need the bilinear form $B$ to be a definable function of $X$ and $Y$.
Such a form exists for all $F$ of characteristic zero or sufficiently large positive characteristic by
\cite{adler-roche}, see also \cite[\S 4.1]{CGH2}.

\subsubsection{The bounds on Fourier transforms of orbital integrals}
Let $D(X)$ be the discriminant  of $X$,
$$
D(X)=
\prod_{\alpha\in \Phi}\alpha(X),
$$
where $\Phi$ is the {root system} of $\bG$.

It is a well-known theorem of Harish-Chandra that for any $X\in \fg(\K)$, the orbital integral at $X$ (as a distribution on the space of locally constant compactly supported functions on $\fg(\K)$) is represented by a function $\widehat\mu_X$, defined and locally constant on the set of regular semisimple elements $\fg(\K)^\rss$; this function, extended by zero to all of $\fg(\K)$, is locally integrable on
$\fg(\K)$. Moreover, the function
$|D(Y)|^{1/2}\widehat\mu_X(Y)$ is locally bounded on $\fg(\K)$.
Following \cite{kottwitz:clay}, we will call such a function a ``nice'' function on
$\fg(\K)$.

R. Herb generalized the boundedness result, also
making it uniform in $X$ in some sense \cite{herb}.
% for regular semi-simple elements $X$
To state Herb's theorem precisely, we
%need to first fix a normalization of the measures.
first need a  definition.
Let $\fh$ be a Cartan subalgebra of $\fg(\K)$, let $T$ be the torus in $\bG(\K)$ such that
$\fh= \Lie(T)$, and let $A$ be the split component of $T$.
Fix an arbitrary open compact subgroup $K$ of $\bG(\K)$, and
%For $X\in \fh$,
define a function on  $\fh(\K)\times \fg(\K)$ by
\begin{equation}\label{eq:Phi:0}
\Phi(\fg, X,Y):= |D(X)|^{1/2}|D(Y)|^{1/2}\int_{\bG(\K)/A}\int_K \psi(B(kY, xX))\,dk dx^\ast,
\end{equation}
where $dx^\ast$ is the quotient measure obtained from the Haar measures on $\bG(\K)$ and on $A$, and $dk$ is the normalized Haar measure on $K$.
This definition is due to Harish-Chandra, and in his definition the normalization of the measure on $A$ does not matter.  We note that $T/A$ is compact; we have already chosen a normalization of the measure on $T$. Let us normalize the measure on $A$ so that the volume of $T/A$ is $1$.
This gives a collection of measures $dx^\ast$ on $\bG(\K)/A_i$, where $A_i$ are the
split components of the representatives of the conjugacy classes of maximal tori in
$\bG(\K)$.

For any fixed collection of normalizations of measures on $\bG(\K)/A_i$, consistent with the normalization of measures on the maximal tori,
Harish-Chandra proved the following
\begin{lem}(\cite[Lemma 7.9]{hc:queens}) Let $\fh$ and $\fh_1$ be two Cartan subalgebras in $\fg(\K)$. For $X\in \fh^\rss$, $Y\in \fh_1^\rss$,
$$\Phi(\fg, X,Y)= |D(X)|^{1/2}|D(Y)|^{1/2}\widehat\mu_X(Y).$$
\end{lem}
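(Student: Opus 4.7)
I would prove the identity distributionally: pair both sides with an arbitrary test function $f \in \tf(\fg(\K)^\rss)$ and match the resulting scalars. Both sides are locally constant on $\fg^\rss$ (the left-hand side by inspection of the defining integral, the right-hand side by Harish-Chandra's regularity theorem for $\widehat\mu_X$), so equality as distributions implies equality pointwise on the regular semisimple set.

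The first step is a purely formal simplification of $\Phi$. Because $\Ad(t)X = X$ for $t \in T := C_{\bG}(X)$, the map $x \mapsto \Ad(x)X$ is right $T$-invariant on $\bG$, and combined with the normalization $\vol(T/A) = 1$ this lets us rewrite the outer integral in $\Phi$ as one over $\bG/T$:
\[
\Phi(\fg,X,Y) \;=\; |D(X)|^{1/2}|D(Y)|^{1/2}\int_{\bG/T}\int_K \psi(B(\Ad(k)Y,\Ad(x)X))\,dk\,d\dot x.
\]
The $K$-averaging plays the role of a regularization: the unregularized oscillatory integral $\int_{\bG/T}\psi(B(Y,\Ad(x)X))\,d\dot x$ diverges, but averaging over the open compact $K$ introduces enough cancellation for the iterated integral to converge absolutely once $X$ and $Y$ are both regular semisimple.

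Next, I would unpack the right-hand side. By the very definition of the Fourier transform of the orbital integral distribution,
\[
\int_\fg f(Y)\,\widehat\mu_X(Y)\,|dY| \;=\; \mu_X(\hat f) \;=\; \int_{\bG/T}\hat f(\Ad(g^{-1})X)\,d\dot g.
\]
Expanding $\hat f$ as a $\psi$-Fourier integral, inserting a harmless average over $K$ (using that $K$ preserves Haar measure and the form $B$), and swapping the order of integration yields the same double integral as in the simplified $\Phi$ formula. The substitution $g^{-1} \mapsto x$, combined with the $\Ad$-invariance of $B$, identifies the two expressions after pairing with arbitrary $f$.

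The main obstacle is the Fubini interchange in the previous step: the $\bG/T$-integral is only conditionally convergent, so a naive swap is illegal. I would address this by restricting $f$ to compact subsets of $\fg^\rss$ (where $|D(Y)|$ is bounded below), then using the Weyl integration formula to transfer the $Y$-integration onto the Cartan $\fh_1$, where the combined oscillation of the character and the $K$-average produces the decay needed for absolute convergence. The discriminant prefactors $|D(X)|^{1/2}|D(Y)|^{1/2}$ appearing in the definition of $\Phi$ are precisely the Jacobians from this orbit-to-Cartan change of variables, which is why they arise naturally in the normalization under which $|D|^{1/2}\widehat\mu_X$ is locally bounded, as in Harish-Chandra's theorem preceding the lemma.
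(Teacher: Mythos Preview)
The paper does not prove this lemma at all: it is stated with the attribution ``Harish-Chandra proved the following'' and cited directly from \cite[Lemma 7.9]{hc:queens}, with no argument given. So there is no proof in the paper to compare your proposal against.

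Your sketch is a reasonable outline of the classical argument (rewrite the $\bG/A$-integral as a $\bG/T$-integral using the normalization $\vol(T/A)=1$, unfold the definition of $\widehat\mu_X$ as a distribution, and match after pairing with test functions), and you correctly identify the genuine analytic difficulty, namely justifying the Fubini interchange when the outer integral is only conditionally convergent. But since the paper simply imports the result from Harish-Chandra's Queens notes rather than reproving it, the appropriate response here is just to cite the reference, as the authors do.
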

R. Herb proved that the function  $\Phi(\fg, X,Y)$ is uniformly bounded on compact sets:
\begin{thm}(\cite[Theorem 1.3]{herb})\label{thm:herb}
 Let $\fh$ be a Cartan subalgebra of $\fg$, and let $\omega$ be a compact subset of the set of regular elements in $\fh$.
Then
$$
\sup_{X\in \omega, Y\in \fg^\rss}|\Phi(\fg, X, Y)|<\infty.
$$
\end{thm}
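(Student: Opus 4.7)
The plan is to bring the statement within the scope of the boundedness machinery of Section~\ref{sec:bounds}. Since $|D|^{1/2}$ does not naturally lie in the class $\cCexp$ but $|D|$ does, the first step is to square and write
\[
|\Phi(\fg, X, Y)|^2 = |D(X)| \cdot |D(Y)| \cdot \widehat\mu_X(Y) \cdot \overline{\widehat\mu_X(Y)}.
\]
By Lemma~\ref{lem:can.orb}, $\widehat\mu_X(Y)$ is a $\cCexp$-function of $(X, Y)$ on the regular semisimple locus, up to a $\cC$-class constant coming from the canonical normalization of the Haar measures on the centralizers. Since $D$ is polynomial, $|D|$ is a $\cC$-function. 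Therefore $|\Phi|^2$ is a $\cCexp$-function on (the definable set underlying) $\omega \times \fg^\rss$.

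Once $|\Phi|^2$ is identified as a $\cCexp$-function, Theorem~\ref{thm:fam:gen} (with $w = Y$, $x = X$) produces a non-negative $G \in \cCexp(\omega)$ with
\[
\sup_{Y \in \fg^\rss(F)} |\Phi(X,Y)|^2 \leq G(X) \leq q_F^d \sup_{Y \in \fg^\rss(F)} |\Phi(X,Y)|^2
\]
whenever the supremum is finite, and Theorem~\ref{thm:fam}(1) then shows that the set of $X$ for which the supremum is finite is a $\cCexp$-locus. The task thus splits into: (i) verifying for every $F$ and every $X \in \omega_F$ that $Y \mapsto |\Phi(X,Y)|$ is bounded on $\fg^\rss(F)$; and (ii) upgrading pointwise finiteness of $G$ on $\omega$ to uniform boundedness. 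Step (ii) can be handled using the fact that $\cCexp$-functions are locally constant off a lower-dimensional subset (Theorem~4.4.3 of \cite{CHallp}), combined with compactness of $\omega$; alternatively one applies Theorem~\ref{thm:fam}(1) once more to $G$ on $\omega$ and invokes transfer via Theorem~\ref{thm.trans}.

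The main obstacle is step (i): controlling $|D(Y)|^{1/2}\widehat\mu_X(Y)$ as $Y$ varies over all of $\fg^\rss(F)$. Local boundedness is the classical niceness theorem of Harish-Chandra, but behavior at infinity in $Y$ requires a genuinely global argument. Herb's original proof handles this by Shalika-germ expansions and induction on semisimple rank through parabolic descent; I do not see a way to bypass this classical input purely within the motivic framework. One could attempt to replace it by an application of the limit results (Theorem~\ref{thm:limits0:gen} to $|\Phi|^2$ with an appropriate $\gamma$, together with the cell decomposition used in the proof of Theorem~\ref{thm:fam:gen}), but the harmonic-analytic content of the decay of $\widehat\mu_X$ at infinity seems to be needed as input. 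The role of the motivic machinery here is primarily to package the boundedness uniformly in $F$ and in $X \in \omega$, allowing the statement to be transferred across local fields via Theorem~\ref{thm.trans}.
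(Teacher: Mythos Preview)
The paper does not prove this theorem. It is quoted from \cite{herb} and used as a black-box classical input: in the proof of Theorem~\ref{thm:orb.int.bound}, Herb's result supplies the qualitative boundedness of $\Phi$ for each fixed $F$ and each $\omega_\lambda$, and only then is the motivic machinery (Lemma~\ref{lem:can.orb} together with Theorem~\ref{thm:presburger-fam}) invoked to extract the explicit dependence on $q_F$ and $\lambda$.

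Your proposal reverses this logic, attempting to derive Herb's theorem from Theorems~\ref{thm:fam} and~\ref{thm:fam:gen}. That cannot succeed, and you in fact identify the reason yourself: Theorem~\ref{thm:fam}(1) only tells you that the locus where $w \mapsto H(w,x)$ is bounded is a $\cCexp$-locus, and Theorem~\ref{thm:fam:gen} only provides an approximate supremum \emph{conditional} on boundedness. Neither result can establish boundedness in the first place. Your step (i), namely that $Y \mapsto |\Phi(X,Y)|$ is bounded on $\fg^\rss(F)$ for each fixed $X$, is already a deep harmonic-analytic fact (essentially the ``niceness'' of $\widehat\mu_X$ together with control at infinity), and you correctly note that it requires Herb's parabolic-descent argument or something of comparable strength. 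Your step (ii) is also not as automatic as suggested: local constancy almost everywhere plus compactness does not yield a global bound for a $\cCexp$-function, since the value on the exceptional lower-dimensional set is uncontrolled. In the paper's actual argument, the uniformity in $X \in \omega$ is already part of Herb's statement, so no such upgrade is needed.

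In short: the paper treats Theorem~\ref{thm:herb} as an import, not a consequence; your attempted internal proof has a genuine gap at step (i), which you have accurately diagnosed.
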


Combining this theorem with Harish-Chandra's lemma, one obtains that Fourier transforms of regular semisimple  orbital integrals are uniformly bounded on compact sets, in a given Lie algebra over a given field.

\subsection{Uniformity: the questions}
Our goal is to answer a series of related natural  questions:

\begin{enumerate}
\item How does the bound in Theorem \ref{thm:herb} vary for a family of compact sets $\omega_\lambda$?
\item Given a definable set $\omega$, how does the bound in Theorem \ref{thm:herb}
depend on the field $\K$?
More specifically, do the same bounds apply for the fields $\K$ of positive characteristic and of characteristic zero with isomorphic residue fields? And,
\item Given a definable set $\omega$, how does the bound depend on the cardinality of the residue field as $\K$ varies through the family of all completions of a given global field?
\end{enumerate}

\subsection{Uniform bounds for Fourier transforms of regular semisimple orbital integrals}

Here we prove two results, analogous to Theorems 1 and 2 of
\cite[Appendix B]{ShinTemp}, for the Fourier transforms of regular semisimple orbital integrals, which answer the above questions.

\begin{thm}\label{thm:orb.int.bound}
Let $\Sigma$ be a fixed choice as in \S \ref{subsub:groups}. Let $Z_\Sigma$ be the corresponding cocycle space, and let
$\dG\to Z_{\Sigma}$
be the definable family of reductive groups as in Proposition \ref{prop:groups}, with $\dg\to Z_\Sigma$ the corresponding family of Lie algebras.
Let $\{\omega_\lambda\}_{\lambda\in \Z^n}$ be a definable family of subsets
of $\fg$, understood as in the remark after Proposition \ref{prop:groups}.
Then
\begin{enumerate}
\item There exist constants $M>0$,  $a$ and $b$ that depend only on $\Sigma$ and on the {formula} defining
$\{\omega_\lambda\}_{\lambda\in \ZZ^n}$,  such that for each non-Archimedean local field $\K$
with residue characteristic at least $M$, the following holds. Let $\bG$ be any connected reductive algebraic group over
$\K$ corresponding to the fixed choice $\Sigma$.
Then
for all $X\in \omega_\lambda$ with $\|\lambda\|\le \kappa$, for all $Y\in \fg(\K)^\reg$,
\begin{equation}\label{eq:bound}
|D(X)|^{1/2}|D(Y)|^{1/2}|\widehat\mu_X(Y)|\le
{q_\K}^{a+b\kappa}.
\end{equation}
\item Moreover, if some constants $a$ and $b$  provide a bound for all fields of sufficiently large positive characteristic, then there exist constants $M$ and $N$ (as above, depending only on the data defining the group and the family of subsets $\{\omega_\lambda\}$) such that  for all local fields of characteristic zero and residue characteristic larger than $M$,
the bound
\begin{equation}
|D(X)|^{1/2}|D(Y)|^{1/2}|\widehat\mu_X(Y)|\le
N{q_\K}^{a+b\kappa}.
\end{equation}
holds for all $X\in \omega_\lambda$ with $\|\lambda\|\le \kappa$, for all $Y\in \fg(\K)^\reg$, and vice versa, i.e., with positive characteristic and characteristic zero swapped.
\end{enumerate}
\end{thm}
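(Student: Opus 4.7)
\emph{Setup.} By Proposition~\ref{prop:groups} and Lemma~\ref{lem:can.orb}, the function
\[
h(z,X,Y) \;:=\; |D(X)|\cdot |D(Y)|\cdot |\widehat\mu_X(Y)|^2
\]
is of $\cCexp$-class on the definable set $\{(z,X,Y):z\in Z_\Sigma,\ X\in\fg_z,\ Y\in\fg_z^\rss\}$. Squaring eliminates the half-integer issue with $|D|^{1/2}$, and taking the complex conjugate of $\widehat\mu_X$ is absorbed by replacing $\psi$ with $\bar\psi\in\cD_F$, so that $|\widehat\mu_X|^2=\widehat\mu_X\cdot\overline{\widehat\mu_X}$ stays of $\cCexp$-class. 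The truncation $h\cdot\mathbf{1}_{\omega_\lambda}$ is then a $\cCexp$-function in $(z,X,Y,\lambda)$ with $\lambda\in\ZZ^n$.

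\emph{Part (1).} For each fixed $F$, $\psi\in\cD_F$, $z$ and $\lambda$, Herb's theorem (Theorem~\ref{thm:herb}), applied after stratifying $\omega_\lambda$ by the finitely many conjugacy classes of Cartan subalgebras in $\fg_z$, shows that $(X,Y)\mapsto h\cdot\mathbf{1}_{\omega_\lambda}$ is bounded on its support. Apply Theorem~\ref{thm:presburger-fam} with ambient set $W:=\{(z,X,Y)\}$ and $\VG^n$-parameter $\lambda$: there exist an integer $b'$ and a definable $d\in\VG$ with
\[
h(z,X,Y)\cdot\mathbf{1}_{\omega_\lambda}(X) \;\le\; q_F^{b'\|\lambda\|+d_F}.
\]
Since $\ldp$ contains no added constants, $d_F$ is bounded by a fixed integer $a'$ for all $F$ of sufficiently large residue characteristic. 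Finiteness of the isomorphism classes of $\bG$ attached to $\Sigma$ lets us take the worst constants over $z$. Taking square roots yields Part~(1) with $a:=a'/2$ and $b:=b'/2$ (doubling the exponents beforehand if one prefers to keep everything integral).

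\emph{Part (2).} Set $h'(X,Y,\lambda):=h(X,Y)\cdot\mathbf{1}_{\omega_\lambda}(X)\cdot q^{-2(a+b\|\lambda\|)}$, again of $\cCexp$-class. The positive-characteristic hypothesis yields $h'\le 1$ pointwise on all such fields with large residue characteristic; in particular, $h'$ is bounded on its whole domain. Boundedness of $h'$ is a $\cCexp$-condition by Theorem~\ref{thm:fam}(1), so by the transfer principle (Theorem~\ref{thm.trans}) it holds in every characteristic-zero field with matching residue field and sufficiently large residue characteristic. Applying Theorem~\ref{thm:fam}(2) in characteristic zero with singleton parameter space produces a definable $\VG$-constant $\alpha$ with $h'\le q_\K^{\alpha}$, which for large residue characteristic reduces to a fixed integer, and the corresponding multiplicative loss is absorbed into the constant $N$ of Part~(2). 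The ``vice versa'' direction is symmetric, swapping the roles of positive and zero characteristic in the application of Theorem~\ref{thm.trans}.

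\emph{Main obstacle.} The most substantial input is the setup: Lemma~\ref{lem:can.orb} organizes the Fourier transforms of orbital integrals (and their discriminant weightings) into $\cCexp$-class functions uniformly in the group data $z\in Z_\Sigma$ and the variables $X,Y,\lambda$. Once this is available, Part~(1) reduces to a direct application of Theorem~\ref{thm:presburger-fam} combined with Herb's uniform boundedness, and Part~(2) is a transfer argument via Theorem~\ref{thm.trans}. The delicate secondary point in Part~(2) is the field-independence of the multiplicative constant $N$, which hinges on the fact that, in the unparameterized language $\ldp$, a definable $\VG$-element collapses to a fixed integer for all local fields of sufficiently large residue characteristic.
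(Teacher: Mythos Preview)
Your setup and Part~(1) are essentially the paper's argument. The only cosmetic difference is that you square to avoid the $\sqrt{q}$-extension the paper invokes from \cite[B.3.1]{CGH2}; this is fine. One small omission you share with the paper's exposition: Lemma~\ref{lem:can.orb} actually gives $\Phi = \tfrac{1}{c^{\bG}(z)} H$ with $H$ of $\cCexp$-class, so strictly speaking you are bounding $H$ via Theorem~\ref{thm:presburger-fam} and must separately absorb the factor $\tfrac{1}{c^{\bG}(z)}$ into the exponent. This is harmless (finitely many isomorphism classes of $z$ and $c^{\bG}\in\cC(Z)$), but you should say so.

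Part~(2), however, has a genuine gap. Your chain is: $h'\le 1$ in large positive characteristic $\Rightarrow$ $h'$ bounded $\Rightarrow$ transfer boundedness via Theorem~\ref{thm.trans} $\Rightarrow$ apply Theorem~\ref{thm:fam}(2) to get $h'\le q_F^{\alpha}$ with $\alpha$ a definable constant, hence a fixed integer for large residue characteristic. But then your ``multiplicative loss'' is $q_F^{\alpha}$, which depends on $F$ through $q_F$ and therefore \emph{cannot} be absorbed into an absolute constant $N$ as the statement requires. What you have proved is only that the bound holds in characteristic zero with $a$ replaced by some $a'=a+\alpha/2$ --- a weaker conclusion than Part~(2). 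The condition ``$|H|\le G$'' is not known to be a $\cCexp$-condition, so the basic transfer principle of Theorem~\ref{thm.trans} does not apply to it directly; this is precisely why the paper invokes the dedicated transfer principle for inequalities \cite[Theorem~3.2]{CGH4}, which compares a $\cCexp$-function $H$ against a bound $G$ and produces the field-independent constant $N$. You need that result (or to reprove it) to close the argument.
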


In order to prove this theorem, we need  a  lemma establishing that orbital integrals \emph{with respect to the canonical measure} are motivic.
The fact that orbital integrals are motivic functions was first proved for \emph{semisimple orbital integrals} in \cite{CHL}, but only for unramified reductive groups. This result was extended to all orbital integrals and all connected
reductive groups in \cite{CGH2}, but the measure on the orbits used there was not necessarily the canonical measure.
%In this paper, we only need it for regular semisimple elements.
Further, this lemma was needed for all semisimple elements, not necessarily regular, in \cite[Appendix B]{ShinTemp}, but at that time we could only prove it up to a uniformly bounded factor (denoted by $i_M$ in \emph{loc. cit.}).
Now we include the proof  in full generality for completeness, even though in this paper we need the statement only for regular semisimple elements.

First, we need one more notation. We observe that the connected component of a centralizer of a semisimple element in $\fg(F)$ is the set of $F$-points of a connected reductive algebraic group, and there is a finite list of possible root data associated with such groups, cf. \cite[\S A.2]{ShinTemp}. In particular, there are finitely many fixed choices $\Sigma_i$ giving rise to split forms
$\bM^{\ast\ast}$ of such centralizers, and for each
$\Sigma_i$, a corresponding cocycle space $Z_i'$ parameterizing all their forms.
We let $Z'$ be the disjoint union of all these cocycle spaces (considered as a definable set); this yields a definable family $\dM \to Z'$ which has the property that all centralizers of elements of $\fg(F)$ arise as $\bM_{z'}(F) := \dM_{F,z'}$ for some $z' \in Z'_F$.
We also note that the condition on the pair $(X, z')\in \fg(F)\times Z'_F$ stating that the centralizer of $X$ is isomorphic to
$\bM_z'$ as reductive groups over $F$ is  a definable condition.
\begin{lem}\label{lem:can.orb}
Let $\dG\to Z_\Sigma$ be as in Theorem \ref{thm:orb.int.bound} above, and let $\dM\to Z'$ be a family of subgroups of $\bG_z$ as above.
Then there exists an integer $M>0$ and a function
$c^{\bG}$ in $\cC(Z)$, such that for any definable family $\{f_a\}_{a\in S}$ of test functions on $\fg_z$
of $\cCexp$-class
there exists a function $H(X, a, z,z')$ in
$\cCexp(\dg\times S\times Z\times Z')$ such that for
all {local fields of residue characteristic greater than $M$}, the following holds.

For every $z\in (Z_\Sigma)_F$, let $\bG_z$ be the corresponding connected reductive group with Lie algebra $\fg_z$.
Let $X \in \fg_z(F)$ be a semisimple element
with centralizer isomorphic to $\bM_{z'}$ for some $z'\in Z'_F$.
Then
$$\mu_X(f_a)=\frac{1}{{c^{\bG}_F}(z)}H_F(X, a,z,z').$$
We recall from \S \ref{sub:orb.int} that $\mu_X$ denotes the orbital integral at $X$ with respect to the canonical measure.
\end{lem}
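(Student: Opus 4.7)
The strategy is to combine two ingredients: the known fact that orbital integrals are of $\cCexp$-class when computed with a convenient ``motivic'' normalization of Haar measures, together with a $\cC$-class comparison of that normalization to Gross's canonical measure.

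First, I would invoke the orbital integral result of \cite{CGH2}, now parameterized by $(z,z') \in Z_\Sigma \times Z'$ via the definable families $\dG \to Z_\Sigma$ and $\dM \to Z'$. This yields a function $\tilde H \in \cCexp(\dg \times S \times Z_\Sigma \times Z')$ with
\[
\tilde H_F(X,a,z,z') \;=\; \int_{\bG_z(F)/\bM_{z'}(F)} f_{a,F}(\Ad(g^{-1})X)\, d^*_{\mathrm{mot}} g,
\]
for every semisimple $X \in \fg_z(F)$ whose centralizer is isomorphic to $\bM_{z'}$, valid over fields $F$ of sufficiently large residue characteristic. Here $d^*_{\mathrm{mot}}g$ denotes the quotient measure coming from Haar measures $d_{\mathrm{mot}}g$ on $\bG_z(F)$ and $d_{\mathrm{mot}}m$ on $\bM_{z'}(F)$ induced by definable top-degree invariant differential forms on the families.

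Next, since all Haar measures on a locally compact group are proportional, there exist scalars $V_\bG(z)$ and $V_\bM(z')$ with $d_{\mathrm{mot}}g = V_\bG(z)\, d_{\mathrm{can}}g$ and $d_{\mathrm{mot}}m = V_\bM(z')\, d_{\mathrm{can}}m$; these scalars are the $d_{\mathrm{mot}}$-volumes of the canonical parahoric subgroups $K_{\bG,z}$ and $K_{\bM,z'}$, which by definition of the canonical measure have $d_{\mathrm{can}}$-volume $1$. A direct verification gives $d^*_{\mathrm{mot}}g = (V_\bG(z)/V_\bM(z'))\, d^*_{\mathrm{can}}g$, so
\[
\mu_X(f_a) \;=\; \frac{V_\bM(z')}{V_\bG(z)}\,\tilde H(X,a,z,z').
\]
Setting $c^\bG(z) := V_\bG(z)$ and $H(X,a,z,z') := V_\bM(z')\,\tilde H(X,a,z,z')$ (which is of $\cCexp$-class since $\cC \subset \cCexp$ and this class is closed under multiplication) delivers the required representation, provided that $V_\bG \in \cC(Z_\Sigma)$ and $V_\bM \in \cC(Z')$.

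The main obstacle is precisely this last point: exhibiting uniformly definable families of canonical parahoric subgroups, $K_\bG \subset \dG$ over $Z_\Sigma$ and $K_\bM \subset \dM$ over $Z'$. In the unramified case one may take $K_{\bG,z} = \bG_z(\cO_F)$ for a smooth definable reductive integral model of $\bG_z$, in which case $V_\bG(z) = |\bG_z(k_F)|/q_F^{\dim \bG_z}$, a manifest $\cC$-function by the Weil formula. For the finitely many ramified forms classified in \cite[\S 6]{GordonHales}, the Bruhat--Tits canonical parahoric must be exhibited as a definable subset of $\dG$ by reading it off from the explicit cocycle construction of $\bG_z$; the same applies to the centralizer families $\dM \to Z'$. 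Once this uniform definability is in hand, the volumes $V_\bG$ and $V_\bM$ are $\cC$-functions by closure of $\cC$ under integration (Theorem~\ref{thm:mot.int.} applied in the non-exponential setting of Remark~\ref{remove-exp}), completing the proof.
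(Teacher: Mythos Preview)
Your approach is essentially the same as the paper's: write the orbital integral using a motivic quotient measure, then compare that measure to the canonical one via a $\cC$-class scalar. The difference is how the comparison is obtained. You identify the comparison constants as the motivic volumes $V_\bG(z)$, $V_\bM(z')$ of the canonical parahorics and then flag, correctly, that the hard part is making those parahorics uniformly definable across all (including ramified) forms. The paper bypasses this entirely by invoking \cite[Theorem~6]{gordon-roe}, which directly supplies a function $c^\bG \in \cC(Z)$ and a family of motivic measures $d\lambda_z$ satisfying $c^\bG(z)\,d_{\mathrm{can}}g = d\lambda_z(g)$ (and likewise for $\bM_{z'}$). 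With this in hand the quotient measure on the orbit is $\dfrac{1}{c^\bG(z)}\cdot\dfrac{c^\bM(z')\,d\lambda_z}{d\lambda_{z'}}$, and the quotient of the two motivic measures is again motivic by the argument of \cite[Lemma~14.15]{ShinTemp}.

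So your proposal has no error, but it leaves open precisely the step that carries the weight. Rather than constructing the canonical parahoric as a definable set, cite \cite{gordon-roe} for the measure comparison; this is what makes the argument go through uniformly in the ramified cases without further work.
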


\begin{rem} We note that everywhere in this lemma, we could replace $\cCexp$-class with $\cC$-class, see Remark \ref{remove-exp}.
\end{rem}

\begin{proof}
Recall that we deal with the sets $\bG_z(F)$, $\fg_z(F)$, etc., while thinking of them as elements of a definable family:
$\bG_z(F) = \dG_{F,z}$, for any $z\in (Z_\Sigma)_F$.
Since the centralizer of $X$ is a definable set (with its defining formulas using $X$ as a parameter), we see that
the condition that the centralizer of $X$ is isomorphic to $\bM_{z'}(F)$ for some $z'\in Z'_F$
gives a definable subset of $\dg\times Z'$. In particular, there exists a definable family  of
subsets
$\fg_{z, z'}$ of $\dg$,
such that
$$({\fg_{z,  z'}})_F =\{X\in \fg_z(F)\mid C_{\bG_z}(X)\cong \bM_{z'} \text{ as reductive groups over $F$}\},$$
where $C_{\bG_z}(X)$ denotes the centralizer of $X$ under the adjoint action of $\bG_z(F)$.

By \cite[Theorem 6]{gordon-roe}, there exists a function $c^\bG\in \cC(Z)$, and a family of motivic measures
$\lambda_z$ on the groups $\bG_z$ such that the canonical measure $dg$ on $\bG_z(F)$
%that appears in (\ref{eq:Phi1})
satisfies
$$c_G(z) dg = d\lambda_z(g).$$
Now, if $\frac{dg}{dt}$ is the quotient measure on the orbit of $X$, where $dt$ is the canonical measure on
$C_{{\bG}_z(F)}(X)$, then we can represent it as
$$\frac{dg}{dt} = \frac{c^{\bM}(z'){d\lambda_z}}{c^\bG(z)d\lambda_{z'}}=
\frac1{c^\bG(z)}\frac{c^{\bM}(z'){d\lambda_z}}{d\lambda_{z'}}.$$
Consider the quotient measure $\frac{c^{\bM}(z'){d\lambda_z}}{d\lambda_{z'}}$.
It is a quotient of two motivic measures, and thus it is motivic by exactly the same argument as the one of \cite[Lemmma 14.15]{ShinTemp}.
The statement of the lemma follows.
\end{proof}

\subsubsection{Proof of Theorem \ref{thm:orb.int.bound}} Let $X\in \fh(K)$ be an arbitrary regular element, and let $T=G_X$ be its centralizer (which is a torus independent of $X$, with $\Lie(T)=\fh$).
First, note that the statement of Theorem \ref{thm:herb} is independent of the normalization of the measure on $T$.
More precisely, it is independent of the normalization of the measure on $A$, the maximal split subtorus of $T$. We can choose such normalization that the volume of the compact torus $T/A$ is equal to $1$.
Then we  have
\begin{equation}\label{eq:Phi}
\Phi(\fg, X, Y)=|D(X)|^{1/2}|D(Y)|^{1/2}\int_{\bG(\K)/T}\int_K \psi(B(kY, xX))\,dk dx^\ast,
\end{equation}
where $dx^\ast$ is the resulting quotient measure.
In particular, for the choice of measure $dx^\ast$ that corresponds to the canonical measures (as above) on
$\bG(\K)$ and on $T$, we obtain, by Theorem \ref{thm:herb} that $\Phi(\fg, X, Y)$ is bounded on $\omega\times\fg^\rss$.

By Lemma \ref{lem:can.orb}, and since $X \mapsto |D(X)|^{1/2}$ is a
function of $\cC$-class (generalized to allow for $\sqrt{q}$ as in  \cite[B.3.1]{CGH2}),
the right-hand side of (\ref{eq:Phi}) is ``of $\cCexp$-class up to $\cC$-constant'',
i.e., we can write (\ref{eq:Phi}) in the form
$$\Phi(\fg_{z}, \fh_{z'},  X, Y)= \frac{1}{{c^{\bG}_F}(z)}H_F(X, Y, z, z'),$$
where $H$ is a $\cCexp$-class function (also generalized as in \cite[B.3.1]{CGH2}),
$z\in Z$, and
the parameter $z'$
determines the isomorphism class of the centralizer of $X$. Since here $X$ is assumed to be regular, its centralizer is a torus, and $z'$
corresponds to the choice of the Cartan subalgebra $\fh$ that contains $X$.

Let us consider the function
$$\tilde H(X, Y, z, z', \lambda):= {\bf 1}_{\omega_\lambda}H(X, Y, z, z').$$

By Theorem \ref{thm:herb}, we know that for every $\lambda\in \ZZ^n$ and every pair $(z, z')\in Z_F\times Z'_F$, the function
$\Phi(\fg_z, \fh_{z'}, X, Y)$ is bounded (as a function of $X$ and $Y$) on $\omega_\lambda\times \fg$.

Next we use the observation from \S \ref{subsub:groups} that for every fixed choice  there are, in fact, finitely many
possible values of $z, z'$ that give rise to distinct Lie algebras. Hence, for every $\lambda$, the function
$\tilde H(z, z', X, Y, \lambda)$ is bounded.
Then by Theorem \ref{thm:presburger-fam}, we have
\begin{equation}
|\tilde H_F(X, Y, z, z',\lambda)|< q^{a +b\|\lambda\|},
\end{equation}
where the integers $a$ and $b$ depend only on the fixed choices, which completes the proof of Part (1).
Indeed, since we are working in sufficiently large residue field characteristic, the remark just below Theorem \ref{thm:presburger-fam}
allows us to use an integer $a$ instead of a definable $d\in \VG$ in the exponent.

 {\bf Part (2).} Follows immediately from \cite[Theorem 3.2]{CGH4}, with a single function $H$ from Part (1) on the left, and
$G(X, Y, z, z',\lambda)= q_F^{a +b\|\lambda\|}$.

\medskip

Finally, in the spirit of \cite[Appendix B]{ShinTemp}, we state an easy corollary that might be useful.
\begin{thm}
Let $G$ be a connected reductive algebraic group over  a number field ${\mathbf F}$.
Let $\{\omega_\lambda\}_{\lambda\in \ZZ^n}$, be a definable family of definable subsets
of $\fg({\mathbf F}_v)$.
Then there exist constants $a_G$ and $b_G$ that depend only on the
global model of $G$ and the formulas defining the subsets $\omega_\lambda$
such that for all {$\lambda \in \ZZ^n$} with $\|\lambda\|\le \kappa$, for
all but finitely many places $v$,
for all $X\in \omega_\lambda$ with $\|\lambda\|\le \kappa$, for all $Y\in \fg({\mathbf F}_v)^\rss$,
$$
|D(X)|^{1/2}|D(Y)|^{1/2}|\widehat\mu_X(Y)|
  \le q_v^{a_G+b_G\kappa} $$
where $q_v$ is the cardinality of the residue field of ${\mathbf F}_v$.
\end{thm}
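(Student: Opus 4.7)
The strategy is to reduce the global statement directly to Theorem~\ref{thm:orb.int.bound}(1) by packaging the number-theoretic data $({\mathbf F}, G, \{\omega_\lambda\})$ into the local data $(\Sigma, Z_\Sigma, \{\omega_\lambda\})$ required by that theorem. The point is that a connected reductive group over a number field gives rise, almost everywhere locally, to a single fixed choice $\Sigma$ in the sense of \S\ref{subsub:groups}.

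First, I would fix an integral model of $G$ over the ring of integers $\cO_{\mathbf{F}}$, possibly after inverting finitely many primes so that $G$ has good reduction outside this finite set. For each remaining place $v$, the base change $G \times_{\mathbf{F}} {\mathbf F}_v$ is a connected reductive group over the local field ${\mathbf F}_v$ whose absolute root datum is that of $G$ itself. Taking $\Sigma$ to be the fixed choice extracted from this absolute root datum together with the action of a suitable Galois group of a splitting field for $G$, each such $G({\mathbf F}_v)$ is of the form $\bG_{z_v}({\mathbf F}_v)$ for some parameter $z_v\in (Z_\Sigma)_{{\mathbf F}_v}$ corresponding to $\Sigma$. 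Equally, the formulas defining $\{\omega_\lambda\}$ uniformly specialize to a definable family of subsets of $\fg_{z_v}({\mathbf F}_v)$ for all $v$ outside a finite set, because by hypothesis they are given by a single $\ldp$-formula which makes sense over every local field of sufficiently large residue characteristic.

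Having established this uniformity, I would apply Theorem~\ref{thm:orb.int.bound}(1) to this fixed choice $\Sigma$ together with the given definable family $\{\omega_\lambda\}_{\lambda\in \ZZ^n}$. This yields constants $M>0$, $a$, $b$ depending only on $\Sigma$ and on the formula defining $\{\omega_\lambda\}$, i.e., ultimately only on the global model of $G$ and on the defining formulas, such that the bound
\[
|D(X)|^{1/2}|D(Y)|^{1/2}|\widehat\mu_X(Y)|\le q_v^{a+b\kappa}
\]
holds for every local field $F$ of residue characteristic at least $M$, and hence for ${\mathbf F}_v$ at every place $v$ whose residue characteristic exceeds $M$. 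Setting $a_G := a$, $b_G := b$, and excluding the finitely many places $v$ of residue characteristic $\le M$ together with the finite set of bad places fixed at the beginning, one obtains the claimed bound.

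The only genuine obstacle is the global-to-local compatibility in the first step: one must check that the parameter $z_v \in (Z_\Sigma)_{{\mathbf F}_v}$ does exist for almost all $v$ and describes $G({\mathbf F}_v)$. This is standard: for $v$ outside a finite exceptional set, $G$ splits over an unramified extension of ${\mathbf F}_v$, and the corresponding cocycle is encoded by a Frobenius element acting on the absolute root datum, which lies in the cocycle space $Z_\Sigma$ as described in \cite[\S 2]{GordonHales}. The measure-theoretic input, namely that the orbital integrals are taken against the canonical measure, is unaffected by passing to a global context, and Lemma~\ref{lem:can.orb} was already formulated in the required field-independent way.
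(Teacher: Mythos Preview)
Your approach is essentially the same as the paper's: both reduce directly to Theorem~\ref{thm:orb.int.bound}(1) by observing that the local groups $G({\mathbf F}_v)$ are captured by the definable-family framework of \S\ref{subsub:groups}. The paper's proof is a one-liner, simply noting that there are finitely many possibilities for the root data of $G_v$ as $v$ varies and then invoking Theorem~\ref{thm:orb.int.bound} for each (taking the maximum of the resulting constants), in parallel with how Theorem~14.1 of \cite{ShinTemp} follows from Theorem~14.2 there.

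One small point worth tightening: you assert that a \emph{single} fixed choice $\Sigma$ suffices for all places $v$ outside a finite set. Recall, however, that a fixed choice $\Sigma$ records not only the absolute root datum but also the abstract Galois group $\Gamma$ of a splitting extension together with its inertia subgroup. As $v$ varies, the decomposition group of a fixed global splitting field $L/{\mathbf F}$ runs over various subgroups of $\gal(L/{\mathbf F})$, so in general one needs finitely many fixed choices $\Sigma_1,\dots,\Sigma_r$ rather than one. This does not affect the argument: apply Theorem~\ref{thm:orb.int.bound}(1) to each $\Sigma_i$, obtain constants $(M_i,a_i,b_i)$, and set $a_G:=\max_i a_i$, $b_G:=\max_i b_i$. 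The paper phrases this as ``finitely many possibilities for the root data''; your version with a single $\Sigma$ would be literally correct only in special cases (e.g.\ when $G$ is split over ${\mathbf F}$).
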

\begin{proof} Since there are finitely many possibilities for the root data of the groups $G_v$ as $v$ varies over the finite
places of $\mathbf F$, this theorem immediately follows from Theorem \ref{thm:orb.int.bound}, exactly in the same way as Theorem
14.1 of \cite{ShinTemp} follows from Theorem 14.2.
\end{proof}

\appendix

% The following line changes, for the remainder of the document, the numbering of
% theorems etc. from A.0.x to A.x
\renewcommand{\thethm}{\thesection.\arabic{thm}}

\section{Adding constants to the language}
\label{sec:const}

For simplicity, all results in this paper are stated using a language containing only basic constant symbols.
However, in all our results, more constants can be added to the language for free, by usual model theoretic arguments and using
that all our results are stated ``in families''; the idea is that any constant can be replaced by a
family parameter. (Note however that the transfer principles need some extra care; see below.)
For example, one can fix a ring $A$, assume that each local field $F$ is equipped with an injection
$\iota_F\colon A \to F$ and that the language contains constant symbols for the elements of $\iota_F(A)$,
or we can assume that each $F$ comes with a uniformizer for which the language contains a constant symbol.
Since not all readers might be familiar with this, we recall how this works.

Fix a set of (new) constant symbols $\Omega$ (in any of the sorts) and set $\ldpo{\Omega} := \ldp \cup \Omega$ (where $\ldp$ is the
language introduced in Definition~\ref{defn.lang}).
We write $\Loc^{\rm all}_{\Omega}$ for the set of fields $F \in \Loc$ which come equipped with (arbitrary) interpretations of
the constant symbols in $\Omega$. Fix an arbitrary subset $\Loc_{\Omega} \subset \Loc^{\rm all}_{\Omega}$, and
define $\Loc_{\Omega,M}$ and $\Loc_{\Omega,\gg 1}$ analogously to Definition~\ref{AO}.
(Usually, in model theory, one would require the subset $\Loc_{\Omega}\subset \Loc^{\rm all}_{\Omega}$ to be defined
by a set of first order sentences, but here, we do not even need that.)

% and fix a set $T_\Omega$ of $\ldp$-sentences. (One may think of the sentences in $T_\Omega$ as putting conditions
% on the constants of $\Omega$, but in reality, $T_\Omega$ can be arbitrary.)
% Write $\Loc_{\Omega}$ for the set of fields $F \in \Loc$ which come equipped with interpretations of
% the constant symbols in $\Omega$ and which satisfy $T_\Omega$. (Define $\Loc_{\Omega,M}$ and $\Loc_{\Omega,\gg 1}$
% accordingly; cf.\ Defn~\ref{AO}.)

\begin{prop}\label{prop.const}
Each of the results listed in Section~\ref{sec:summary}, with exception of the transfer results, also hold if one replaces $\ldp$ by $\ldpo{\Omega}$ and $\Loc$ by $\Loc_{\Omega}$ everywhere.
\end{prop}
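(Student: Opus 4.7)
The plan is to use the standard model-theoretic technique of absorbing constants into family parameters, exploiting the fact that every result in Section~\ref{sec:summary} is stated ``in families'', i.e., allows an arbitrary definable parameter set $X$. First I would fix, for a given result, the input data (a $\cCexp$-function $H$, definable sets $X, Y$, a $\cCexp$-locus, etc.) given in the enriched language $\ldpo{\Omega}$. Any such object uses only finitely many constants $c_1, \dots, c_k$ from $\Omega$, say with $c_i$ of sort $S_i$. There is then a canonical construction: starting from the $\ldpo{\Omega}$-formulas and the combinatorial data (integers, pairs of integers, etc.) appearing in Definitions~\ref{motfun}, \ref{expfun}, \ref{locset}, and \ref{loccond}, replace every occurrence of $c_i$ by a fresh variable $s_i$. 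This yields genuine $\ldp$-objects $\tilde H$, $\tilde X$, $\tilde Y$, \dots\ over the enlarged parameter space $\tilde X' := S_1 \times \cdots \times S_k \times X$, and for each $F \in \Loc_{\Omega}$ (which comes with a specified interpretation $\iota_F(c_i) \in S_i(F)$), the specialization at the tuple $(\iota_F(c_1), \dots, \iota_F(c_k))$ recovers the original $\ldpo{\Omega}$-object on $F$.

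Next I would apply the corresponding $\ldp$-result from Section~\ref{sec:summary} to these enlarged $\ldp$-inputs, treating $s_1, \dots, s_k$ as ordinary parameters. This produces output $\ldp$-objects (a $\cCexp$-function $\tilde G$, a definable function $\tilde\alpha$, a $\cCexp$-locus, a bound of some prescribed shape, etc.) satisfying the desired conclusions for all $F \in \Loc_{\gg 1}$ and all values of $(s_i) \in \prod_i S_i(F)$. Specializing these outputs at $(\iota_F(c_1), \dots, \iota_F(c_k))$ then yields $\ldpo{\Omega}$-objects satisfying the required conclusions for every $F \in \Loc_{\Omega}$ with $F \in \Loc_{\gg 1}$. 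The restriction from $\Loc^{\rm all}_{\Omega}$ to an arbitrary subset $\Loc_{\Omega}$ causes no difficulty: every conclusion in Section~\ref{sec:summary} is a universal statement over $F$, so a property holding for all $F \in \Loc^{\rm all}_{\Omega}$ with $F \in \Loc_{\gg 1}$ holds in particular for all $F \in \Loc_{\Omega}$ with $F \in \Loc_{\gg 1}$. This also explains why transfer results are excluded: their conclusions depend only on the residue field of $F$, but the interpretations $\iota_F(c_i)$ of the constants need not be controlled by the residue field, so a transfer statement for $\tilde G$ in the variables $s_i$ does not descend to a transfer statement for the $\ldpo{\Omega}$-specialization.

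The only thing to verify, and the main (if modest) obstacle, is that the ``replace constants by fresh variables'' construction is genuinely canonical and commutes with all operations used in the paper: passage to $\cCexp$-loci, boolean combinations, quantification, integration, reparameterization, taking (approximate) suprema, pointwise and $L^p$-limits, and so on. Each of these is either defined purely in terms of the underlying formulas and combinatorial data (in which case the commutation is immediate from the definitions) or is derived in the main text by combining such operations; in all cases, specialization of the resulting $\ldp$-object at any fixed tuple in $\prod_i S_i(F)$ yields the correct $\ldpo{\Omega}$-object, because specialization of parameters commutes with all these constructions pointwise in $F$. Since no new analytic content is introduced, this suffices to transfer each non-transfer result from $\ldp$ to $\ldpo{\Omega}$ as claimed.
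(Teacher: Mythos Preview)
Your proposal is correct and follows essentially the same approach as the paper: reduce to finitely many constants, replace them by fresh variables to enlarge the parameter set, apply the $\ldp$-version of the result as a black box, and then specialize the output back to the constants. Your third paragraph is more than the paper spells out---the paper simply observes that since the $\ldp$-result holds for \emph{all} values of the new parameters, it holds in particular for the constant tuple, so no separate verification of commutation with intermediate operations is needed; but what you wrote is not wrong, just slightly more elaborate than required.
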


\begin{proof}
The results from Section~\ref{sec:app} build on the ones from the previous sections, so it suffices to verify that
the proposition applies to the results from those previous sections.
First note that in each of those, $\Loc_{\Omega}$ only appears in the form ``for each $F \in \Loc_{\Omega,\gg 1}$''
(or variants of that), so in particular, we may as well assume $\Loc_{\Omega} = \Loc^{\rm all}_{\Omega}$.

Next, since each theorem involves only a finite number of ``input objects'' (definable sets, definable functions,
$\cCexp$-function, etc.) and each input object involves only finitely many formulas,
only finitely many of the constants from $\Omega$ are used, so we may as well assume that we only added
a finite tuple $\omega$ of constants to the language.

Finally, note that each result involves a parameter set (always denoted by $X$).
We introduce a new tuple $z$ of variables of the same sorts as $\omega$, we let
$Z$ be the corresponding cartesian product of sorts, we replace the parameter set $X$
by $Z \times X$, and in all input objects, we replace all occurrences of $\omega$ by $z$.
Then we apply the $\ldp$-version of the theorem.
If the theorem has some ``output objects'' (this is the case for all results but Lemma~\ref{Lp-vs-pointwise}),
then in that output object, we replace the $z$ by $\omega$ again. The $\ldpo{\Omega}$-objects obtained
in this way have the desired properties.
\end{proof}

For the transfer principles to hold with additional constants in the language, one needs extra control on these constants via suitable axioms specifying the subset $\Loc_{\Omega} \subset \Loc^{\rm all}_{\Omega}$.
%, so that it is not possible to distinguish, in a first order way, between different interpretations of the constants in $F$, at least in the limit when $F$ has big residue characteristic.
Instead of trying to formulate this in the most general possible way, we just formulate it for
collections of constants that are commonly used (e.g.\ in \cite{CGH}): Let $\cO$ be the ring of integers of some fixed number field, set
$\Omega := \cO[[t]]$, and let $\Loc_{\Omega}$ be the set of fields $F \in \Loc$ together with a ring homomorphism
$\iota_F\colon \cO \to F$. Recall that an $F \in \Loc$ comes with a uniformizer $\varpi_F$.
We consider an $F \in \Loc_{\Omega}$ as an $\ldpo{\Omega}$-structure by interpreting elements of $\Omega$ via
the continuous ring homomorphism which is equal to $\iota_F$ on $\cO$ and which sends $t$ to $\varpi_F$.
Given an element $\omega \in \cO[[t]]$, we write $\omega_F$ for its interpretation in $F$.

\begin{prop}\label{prop.const.trans}
Let $\Omega$ and $\Loc_{\Omega}$ be as described right above this proposition. Then
each transfer result listed in Section~\ref{sec:summary} still holds if one replaces $\ldp$ by $\ldpo{\Omega}$ and $\Loc$ by $\Loc_{\Omega}$ everywhere.
\end{prop}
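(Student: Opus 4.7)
The plan is to reduce Proposition~\ref{prop.const.trans} to the $\ldp$-version of the transfer principle (Theorem~\ref{thm.trans}) via the standard model-theoretic trick of replacing constants by variables, together with a careful interpretation of the residue-field-isomorphism hypothesis.

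First, I would make precise what ``residue field isomorphism'' should mean in the presence of the constants $\Omega = \cO[[t]]$. Each $\omega \in \Omega$ is interpreted as an element of $\cO_F$ (since $t \mapsto \varpi_F$ has positive valuation), and its reduction modulo $\varpi_F$ lies in the image of the composition of $\iota_F$ with the reduction $\cO_F \to k_F$. The natural transfer hypothesis is therefore that the residue-field isomorphism $k_F \iso k_{F'}$ be an isomorphism of $\cO$-algebras, with the $\cO$-algebra structures induced by $\iota_F$ and $\iota_{F'}$ (and analogously on the higher residue rings $\RF_{n,F}$ when the residue characteristic is small). Without such a compatibility condition on the isomorphism, transfer cannot possibly hold, since the interpretations of the constants in the two fields would be uncorrelated.

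Second, given a $\cCexp$-condition $P(x)$ in $\ldpo{\Omega}$, I would observe that only finitely many constants $\omega_1, \ldots, \omega_n \in \Omega$ can occur in its defining data. I then replace these by fresh $\VF$-sort variables $z_1, \ldots, z_n$, obtaining a $\cCexp$-condition $\tilde P(x, z_1, \ldots, z_n)$ in the base language $\ldp$ for which
\[
P_{F,\psi}(x) \;\Longleftrightarrow\; \tilde P_{F,\psi}\bigl(x, \omega_{1,F}, \ldots, \omega_{n,F}\bigr)
\]
holds for every $F \in \Loc_\Omega$ and every $\psi \in \cD_F$. This is exactly the reduction mechanism used in the proof of Proposition~\ref{prop.const}.

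Third, to complete the transfer, I would invoke a parameterized version of the Ax--Kochen/Ershov theorem (Proposition~9.2.1 of \cite{CLexp}) applied to the enriched language $\ldpo{\Omega}$. Because $F$ and $F'$ are Henselian with residue fields isomorphic as $\cO$-algebras, and because each $\omega_{i,F} = \sum_{m \ge 0} \iota_F(a_{i,m}) \varpi_F^m$ (with $\omega_i = \sum_m a_{i,m} t^m$) is determined up to arbitrarily high $\varpi_F$-adic precision by finitely many images $\iota_F(a_{i,m})$ in suitable residue rings, the tuples $(F; \omega_{1,F}, \ldots, \omega_{n,F})$ and $(F'; \omega_{1,F'}, \ldots, \omega_{n,F'})$ satisfy the same $\cCexp$-conditions of $\ldp$ (in the sense of Theorem~\ref{thm.trans}), after the $\cO$-algebra isomorphism of residue fields is used to match the parameters. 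Hence $\tilde P(x, \omega_{1,F}, \ldots)$ transfers from $F$ to $F'$, which is precisely the transfer of $P$ itself.

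The main obstacle is making the Ax--Kochen/Ershov step fully rigorous in the enriched language: one must verify that Proposition~9.2.1 of \cite{CLexp} remains valid when constants from $\cO[[t]]$ are adjoined and interpreted via a chosen $\iota_F$ and the uniformizer $\varpi_F$. This is a routine but careful model-theoretic exercise, since the $\cO$-algebra structure on the residue field (together with the uniformizer) pins down the image of $\cO[[t]]$ in $\cO_F$ up to Henselian isomorphism, which is precisely what is needed to deduce elementary equivalence over the $\omega_i$.
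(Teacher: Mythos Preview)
The paper's proof is far more direct: it simply observes that Theorem~\ref{thm.trans} (i.e.\ Proposition~9.2.1 of \cite{CLexp}) is \emph{already} stated and proved in \cite{CLexp} in the enriched language $\ldpo{\Omega}$, as a consequence of Denef--Pas quantifier elimination, which itself holds with constants from $\cO[[t]]$. Since every other transfer result in Section~\ref{sec:summary} is obtained by combining Theorem~\ref{thm.trans} with results covered by Proposition~\ref{prop.const}, nothing further is needed.

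Your constants-to-variables reduction, while exactly the right move for Proposition~\ref{prop.const}, does not buy you anything here and in fact leaves a real gap. After replacing the constants $\omega_i$ by variables $z_i$, you need to transfer the statement ``$\tilde P_{F,\psi}(x,\omega_{1,F},\dots,\omega_{n,F})$ holds for all $\psi,x$'' from $F$ to $F'$. But the $\ldp$-version of Theorem~\ref{thm.trans} only transfers conditions that hold for \emph{all} values of the free variables; it says nothing about specific valued-field parameters $\omega_{i,F}$, and there is no mechanism in the $\ldp$-theorem for matching such parameters across $F$ and $F'$ (the fields are merely elementarily equivalent, not isomorphic). Your step~3 tries to patch this by invoking Proposition~9.2.1 of \cite{CLexp} ``applied to the enriched language $\ldpo{\Omega}$'', but that is precisely the result you are trying to establish, so the argument is circular. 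What you call ``a routine but careful model-theoretic exercise'' in the last paragraph --- verifying that the transfer principle of \cite{CLexp} remains valid with the $\cO[[t]]$-constants --- is not a reduction to the $\ldp$ case at all; it is the content of the proposition, and it is already done in \cite{CLexp}. (Your observation that the residue-field isomorphism must be compatible with the $\cO$-algebra structure is correct and worth making explicit; this is implicit in the paper's framework, since ``isomorphic residue field'' in $\ldpo{\Omega}$ means isomorphic as a structure in the induced residue-field language.)
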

\begin{proof}
Theorem~\ref{thm.trans}, which we cited from \cite{CLexp}, is actually stated in the language $\ldpo{\Omega}$ and for fields in $\Loc_\Omega$ in \cite{CLexp},
and follows from Denef--Pas quantifier elimination.
Since all other transfer results are deduced from Theorem~\ref{thm.trans} (and from results to which Proposition~\ref{prop.const} applies), they hold accordingly.
\end{proof}

Finally, let us make precise the comments on $d$ below Theorems \ref{thm:presburger-fam} and \ref{thm:fam:gen}. If $\Omega$ and $\Loc_{\Omega}$ are as described right above Proposition \ref{prop.const.trans}, then any $\ldpo{\Omega}$-definable $d\in\VG$ can still be bounded by $a+\ord (c)$ for some integers $a$ and $c$ depending on $d$. However, for more general $\Omega$ and $\Loc_{\Omega}$ such integers $a$ and $c$ may no longer yield bounds for $d_F$ uniformly in $F$.

\begin{remark}
All results in this paper also stay true if one consequently expands the language by some analytic structure as in \cite{CLip,CLips}, or by arbitrary additional structure on the residue ring sorts $\RF_n$. Indeed, by \cite[Section 4.7]{CHallp}, this is true for the results from \cite{CHallp} the present paper is based on.
\end{remark}

\bibliographystyle{amsplain}
\bibliography{anbib}
\end{document}